\newcommand{\Vsp}{{\boldsymbol V}}
\newcommand{\sett}[1]{\ensuremath{\left \{ #1 \right \}}}
\newcommand{\ip}[2]{\ensuremath{\left<#1,#2\right>}}
\newcommand{\abs}[1]{\ensuremath \left| #1 \right|}
\newcommand{\el}{z}
\newcommand{\rel}{\textit{rel}}
\newcommand{\schurw}{{\mathcal{S}_w}}
\newcommand{\Sobqs}{{L^q_s}}
\newcommand{\Bsploc}{\Bsp_{\textit{loc}}}
\newcommand{\spline}{\Vsp}
\newcommand{\splinepv}{\spline^p_v}
\newcommand{\splinepvi}{\left(\spline^p_v\right)_i}
\newcommand{\splinet}{\spline^2}
\newcommand{\splinezz}{\spline^{00}}
\newcommand{\stft}{\mathcal{V}}
\newcommand{\parti}{{\ensuremath{E_i}}}
\newcommand{\partj}{{\ensuremath{E_j}}}
\newcommand{\partt}{\ensuremath{\mathcal{E}}}
\newcommand{\partn}{\#_{\mathcal{E}}}
\newcommand{\partnq}{\#_{\mathcal{E,Q}}}
\newcommand{\partnqp}{\#_{\mathcal{E,Q'}}}
\newcommand{\partfi}{\eta_i}
\newcommand{\Linfloc}{L^\infty_{\textit{loc}}}
\newcommand{\gabt}{\mathbf{G}}
\newcommand{\dash}{-}
\newcommand{\field}[1]{\mathbb{#1}}
\newcommand{\bC}{\field{C}}
\newcommand{\Rst}{{\mathbb R}}
\newcommand{\RR}{{\mathbb R}}
\newcommand{\Rdst}{{\Rst^d}}
\newcommand{\RRd}{{\Rst^d}}
\newcommand{\Rd}{{\Rst^d}}
\newcommand{\Rtdst}{{\Rst^{2d}}}
\newcommand{\set}[2]{\big\{ \, #1 \, \big| \, #2 \, \big\}}
\newcommand{\norm}[1]{\lVert#1\rVert}
\newcommand{\Esp}{{\boldsymbol E}}
\newcommand{\Bsp}{{\boldsymbol B}}
\newcommand{\Hsp}{{\boldsymbol H}}
\newcommand{\DRd}{{{\mathcal D}(\Rst^d)}}
\newcommand{\SchRd}{\mathcal{S}(\RRd)}
\newcommand{\SchpRd}{{\mathcal{S}'(\Rdst)}}
\newcommand{\SchpRtd}{\mathcal{S}'(\Rtdst)}
\newcommand{\ebbes}{\mbox{$\,\cdot\,$}}
\newcommand{\BspN}{(\Bsp, \, \|\ebbes\|_\Bsp)}
\newcommand{\Zst}{{\mathbb Z}}
\newcommand{\Zdst}{{\Zst^d}}
\newcommand{\supess}{\mathop{\operatorname{supess}}}
\newcommand{\supp}{\operatorname{supp}}
\newcommand{\Lsp}{{\boldsymbol L}}
\newcommand{\Ltsp}{{\Lsp^2}}
\newcommand{\LtRd}{{\Ltsp(\Rst^d)}}
\newcommand{\Nst}{{\mathbb N}}
\newtheorem{lemma}{Lemma}
\newtheorem{theorem}{Theorem}
\newtheorem{corollary}{Corollary}
\newtheorem{prop}{Proposition}
\newtheorem{rem}{Remark}
\newtheorem{example}{Example}
\title{Surgery of spline-type and molecular frames}
\author[J.L.~Romero]{Jos\'e Luis Romero}
\address{Departamento de
Matem\'atica \\ Facultad de Ciencias Exactas y Naturales\\ Universidad
de Buenos Aires\\ Ciudad Universitaria, Pabell\'on I\\ 1428 Capital
Federal\\ ARGENTINA\\ and CONICET, Argentina}
\email[Jos\'e Luis Romero]{jlromero@dm.uba.ar}
\begin{document}
\begin{abstract}
We prove a result about producing new frames for general spline-type spaces
by piecing together portions of known frames. Using spline-type spaces as
models for the range of certain integral transforms, we obtain results
for time-frequency decompositions and sampling.
\end{abstract}

\maketitle

\section{Introduction and overview}
By a spline-type space we mean a normed function space generated by well
localized atoms. The most commonly found scenario in the literature is the
one of finitely generated shift-invariant spaces. In that case, the atoms are
simply lattice shifts of a finite family of functions. Our motivation to
consider spline-type spaces in more generality comes from its possible
applications to atomic decompositions. Many atomic decompositions of
classical function spaces are produced by sampling a continuous integral
(wavelet) transform. The form of that integral transform makes it possible to
relate its size and spatial localization to its modulus of continuity. This
observation plays a central role, for example, in the general theory
developed in \cite{fegr89}. In this article we argue that spline-type spaces
are a model for the range of certain wavelet transforms and prove a locality
statement for them that can then be recast as a result for the
corresponding atomic decompositions.

We will prove a locality principle in the form of a \emph{surgery scheme}
for well-localized frames. Our main result asserts that, given a family of
frames for a spline-type space, it is possible to construct a new frame for
the same space by piecing together arbitrary portions of the original frames,
provided that the overlaps between these portions are large enough. Although
the result we prove is qualitative, special emphasis is made on how the
qualities of the ingredients affect the surgery procedure and what kind of
uniformity is to be expected. This is one reason why we work on the Euclidean
space and not on a general locally-compact group (although much of the
elements involved in our construction have a counterpart in the abstract
setting.) The other reason is that we make use of localization theory (see
\cite{gr04-1}, \cite{fogr05} and \cite{bacahela06}) and the most
interesting examples we could cover by extending the setting to abstract
groups, are not covered by it.

For the applications we consider mainly two transforms. The first is the
Short Time Fourier Transform (STFT) with a fixed (good) window. This
transform maps modulation spaces into spline-type spaces - considered in the
general sense - and then yields an application of the surgery scheme to
Gabor frames. These results imply a general existence condition for the
recently introduced concept of \emph{quilted Gabor frame} (see
\cite{dofe09}.) Since the STFT does not exactly map time-frequency shifts
into translations - there is an extra phase factor or twist on the STFT side
- we see that shift-invariant spaces are not a sufficient model for the range
of the transform: we must us general spline-type spaces. As a by-product of
this general treatment, the result we get holds not only for pure
time-frequency shifts but also for Gabor molecules concentrated around a
general set of nodes.

The second transform we consider is the Kohn-Nirenberg map, which
- as shown in \cite{fe02} -
establishes a correspondence between the class of Gabor multipliers 
(related to different Gabor frames) and
the class of (shift-invariant) spline-type spaces (see also
\cite[Chapter 5]{fest03}.) Gabor multipliers are operators that arise from
applying a mask to the coefficients associated with a Gabor frame expansion;
hence each of these operators has the form
\[
T= \sum_{\lambda \in \Lambda} c_\lambda P_\lambda,
\]
where $c_\lambda \in \bC$ and $P_\lambda$ is a rank-one operator
(essentially a projector onto the subspace generated by a time-frequency
atom.) Each operator in a given class of Gabor multipliers can be identified
by its associated \emph{lower symbol} which consists of the Hilbert-Schmidt
inner products $\set{\ip{T}{P_\lambda}}{\lambda \in \Lambda}$. Combining the
surgery scheme with the KN map and known tools for shift-invariant spaces we
get a sufficient condition to identify a class of Gabor multipliers by a
\emph{mixed lower symbol} constructed by using different types of rank-one
operators $P_\lambda$ for $\lambda$ in different regions of the
time-frequency plane.

Finally, we give an application to irregular sampling. Given a family of
sampling sets for which a sampling inequality is known, we can construct new
sets for which the sampling inequality still holds. Moreover, given explicit
reconstruction formulas for the original sets, we get an approximate
reconstruction formula for the new sets.

We now mention two technical aspects of the article. The first one concerns
the use of localization theory. In order to develop the surgery scheme, we
not
only need to know that localized frames have localized dual frames but also
what qualities of the original atoms influence the concentration of their
dual atoms. To this end, we resort to the constructive proof of Wiener's
lemma for infinite matrices given in \cite{su07-5} (see also \cite{su05}.)

The second aspect concerns the use of amalgam spaces. When moving to the
setting of spaces generated by general atoms, the standard tools for amalgam
spaces are not directly applicable and require an extension. In the study
of shift-invariant spaces (or more generally, spaces generated by translates)
the relevant operators can be expressed as products and convolutions with
possibly distributional kernels. Wiener amalgam spaces, as introduced in
\cite{fe83}, have proved to be a powerful tool to quantify this formalism.
The abstract convolution multiplier theorems allow to deal with smoothness
and approximation problems in the context of atoms generated by irregular
shifts (see \cite{fe92-3}.)
In the context of general spline-type spaces, the relevant operations are not convolutions but, nonetheless, they are convolution-like. For example, in the proposed applications to (regular) Gabor frames, instead of convolution inequalities for Wiener amalgams we would need \emph{twisted} convolution inequalities.

Convolution dominated operators (see \cite{ba90-1}, \cite{ja90-2},
\cite{fegrle08}) and enveloping conditions for irregular atoms (\cite{su08},
\cite{bacahela06}, \cite{grpi09}) are now widely used concepts. Here, we will
consider an enveloping condition for atoms, not in a pointwise sense, but in
the sense of a local - possibly non solid - quantity. We will extend the
amalgam norm of a function $f$ to families of functions $F$ in such a way
that the condition $\norm{F}_{W(B,E)} < \infty$ grants to $F$ the same
properties shared by a set of translates of $f$, when $\norm{f}_{W(B,E)} <
\infty$. When the local norm measures size, the condition $\norm{F}_{W(B,E)}
< \infty$ will amount to certain spatial localization for the family
$F$; when the local norm measures smoothness, it will amount to certain
equismoothness property for the family $F$. Using this extension of the
amalgam norm and a simple interpolation argument, we obtain replacements
for some of the convolution inequalities in amalgam spaces. These are needed,
for example, to extend to the general setting the principle that in a
finitely-generated shift-invariant space the smoothness of the generating
windows is inherited by the whole space. 

The article is organized as follows. Section \ref{sec_prelim} introduces all
the tools required to deal with general sets of atoms and in particular the
extension of the amalgam norms to families of functions. In Section
\ref{sec_splines} we formally introduce spline-type spaces and extend to this
setting some of the classic results for shift-invariant spaces. In Section
\ref{sec_surgery} we prove the central result on piecing together various
frames to build a new one. The result stated there has the limitation of
requiring specific information on the decay of the dual atoms of the frames
being pieced together. For clarity, this limitation is addressed separately
in Section \ref{sec_quality}. Section \ref{sec_applications} collects the
results from the previous sections and gives several applications.
\section{Preliminaries}
\label{sec_prelim}
\subsection{Frames}
\label{sec_frames}
Let $\Esp$ be a separable Banach space. A \emph{Banach frame} for $\Esp$
consists of a countable family $\sett{f_k}_{k\in\Lambda} \subseteq \Esp$
together with a sequence space $\Esp_d \hookrightarrow \bC^\Lambda$ such that
the \emph{reconstruction operator}
\begin{align}
\label{eq_R_banach_frame}
R: \Esp_d &\to \Esp
\\
\nonumber
\left(c_k\right)_k &\mapsto \sum_k c_k f_k
\end{align}
is a well-defined bounded \emph{retraction}. This means that $R$ is bounded
and there exists another operator $C: \Esp \to \Esp_d$, called
\emph{coefficient operator}, such that $RC=I_\Esp$. We will only consider
sequence spaces $\Esp_d$ for which the set of sequences $\sett{\delta^k}_k$
- given by $\delta^k_j= 1$, if $k=j$, and $0$ otherwise - is an
unconditional basis. In this case, if we only assume that the operator $R$
maps each sequence $\delta^k$ to $f_k$, then it follows that $R$ has the
form prescribed by Equation \eqref{eq_R_banach_frame} and that the series
in that equation converges unconditionally.

Since $\Esp_d \hookrightarrow \bC^\Lambda$, the coefficient operator $C$ is
implemented by some family of linear functionals $\sett{g_k}_k \subseteq
\Esp'$ by means of the formula $C(f) = \left( \ip{f}{g_k} \right)_k$. When a
particular choice of a coefficient operator (and hence of coefficient
functionals $\sett{g_k}_k$) is made, we speak of a \emph{frame pair}
$\left(\sett{f_k}_k, \sett{g_k}_k \right)$.

In concrete examples, the coefficient functionals may have various representations. For example, if $\Esp$ is a closed subspace of a Hilbert space $\Hsp$, then each coefficient
functional can be represented by various vectors $g_k \in \Hsp$. Each of
these choices will be considered to yield a different frame pair
$\left(\sett{f_k}_k, \sett{g_k}_k \right)$.

It is common in the literature to define Banach frames in terms of
the coefficients functionals $\sett{g_k}_k$ rather than the atoms
$\sett{f_k}_k$. A family of linear functionals $\sett{g_k}_{k\in\Lambda}
\subseteq \Esp'$, together with a sequence space
$\Esp_d \hookrightarrow \bC^\Lambda$  is called a \emph{Banach frame} for $\Esp$ 
if the coefficient operator
\begin{align*}
C: \Esp &\to \Esp_d
\\
f &\mapsto \left( \ip{f}{g_k} \right)_k
\end{align*}
is a bounded \emph{section}; that is, there exists a bounded operator $R: \Esp_d \to \Esp$ such that
$RC=I_\Esp$.
In the abstract setting there is no possible confusion between the two usages
since the atoms and the coefficients functionals belong to
different spaces. However, in concrete examples where $\Esp$ is a classical
function space and $\Esp'$ is identified with another classical function
space, these two usages can be ambiguous. In the context of Hilbert spaces,
since $C=R^*$, if we use the canonical representation of functionals as
elements of the Hilbert space, then the two definitions of frames are
equivalent. This equivalence extends to \emph{localized frames} (see
\cite{gr04-1}.)
Frames produced by extension of a localized Hilbert space frame to its
associated Banach spaces are Banach frames in both of the senses discussed
above (see \cite{fogr05}.) In this article, every reference to a Banach frame
will be followed by a clarification about its precise meaning.

If $\Hsp$ is a Hilbert space and $\Esp \subseteq \Hsp$ is a closed subspace,
a sequence $F \equiv \sett{f_k}_k \subseteq \Hsp$ whose orthogonal projection
onto $\Esp$ forms a frame for $\Esp$ is called an \emph{exterior frame} for
$\Esp$. Hence, $F$ is an exterior frame for $\Esp$ if it satisfies the
\emph{exterior frame inequality},
\begin{equation*}
 A \norm{g}^2_\Hsp \leq \sum_k \abs{\ip{g}{f_k}}^2 \leq B \norm{g}^2_\Hsp,
\quad \mbox{for all $g \in \Esp$,}
\end{equation*}
and some constants $0<A \leq B < +\infty$.
Likewise, if the projection of $F$ onto $\Esp$, forms a Riesz basis for
$\Esp$, then $F$ is called a \emph{Riesz projection basis} for $\Esp$ (see
for example \cite{fezi98}.)
\subsection{Sets}
A subset $\Lambda \subseteq \Rdst$ is called \emph{relatively separated} if
the quantity,
\begin{equation}
\label{def_rel_sep}
\rel (\Lambda) := \max \set{\#(\Lambda \cap ([0,1]^d+x))}{x \in \Rdst}
\end{equation}
is finite. We will call the number $\rel (\Lambda)$ the \emph{relative separation} of
the set $\Lambda$. This is somehow an abuse of language since for a very separated set, this quantity is small.

A subset $\Lambda \subseteq \Rdst$ is called L-dense, for some $L>0$, if
\begin{equation}
\label{ldense}
\Rdst = \cup_{k\in\Lambda} B_L(k),
\end{equation}
where $B_L(k)$ denotes the open ball of center $k$ and radius $L$. $\Lambda$
is called \emph{relatively dense} if it is L-dense for some $L>0$.

\subsection{Weights}
A weight $w$ is a function $w:\Rdst \to (0,+\infty)$. For simplicity we will always assume that $w$ is continuous and symmetric.

A weight $w$ is said to be \emph{submultiplicative} if it satisfies,
\begin{equation}
\label{submult}
w(x+y) \leq w(x)w(y)
\mbox{, for all $x,y \in \Rdst$}.
\end{equation}
As an example, the \emph{polynomial weights}
\begin{equation}
\label{polyweight}
w_t(x) := (1+\abs{x})^t,
\end{equation}
satisfy the submultiplicativity condition if $t \geq 0$.

A second weight $v$ is called \emph{w-moderated} if it satisfies,
\begin{equation}
\label{moderated_weight}
v(x+y) \leq C v(x)w(y),
\end{equation}
for some constant $C>0$ and every $x,y \in \Rdst$. If the constant in
Equation \eqref{moderated_weight} is 1, we say that $v$ is \emph{strictly
moderated} by $w$.
The polynomial weight $w_t$ is strictly $w_s$-moderated if $s \geq 0$ and
$\abs{t} \leq s$.

For a sequence $\sett{c_k}_{k \in \Lambda} \subseteq \bC$, we consider the weighed norm,
\[
\norm{c}_{\ell^p_w} := \norm{d}_{\ell^p},
\mbox{where }
d_k := \abs{c_k} w(k),
\]
and we denote by $\ell^p_w$ the space of all such sequences having finite norm.
Weighed $L^p$ are defined similarly.

For technical reasons, we consider within $\ell^p_w$ the subspace $\el^p_w$ 
defined as the closure of the set of finitely-supported sequences. For $1
\leq p < \infty$ this is just the whole space $\ell^p_w$ and for $p=\infty$
it is $c^0_w$. We make this definition so as not to have to consider the case
$p=\infty$ separately.

We now state for future reference some facts about polynomial weights. The first lemma says that polynomial weights are \emph{subconvolutive} (see \cite{fe79}.)
\begin{lemma}
\label{conv_bound}
If $t>d$, then,
\[
w_{-t} * w_{-t} \leq K w_{-t},
\]
for some constant $K \lesssim \max \sett{1,1/(t-d)}$.
\end{lemma}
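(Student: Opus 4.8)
The plan is to prove the stronger pointwise bound $(w_{-t}*w_{-t})(x)\le K\,w_{-t}(x)$ for every $x\in\Rdst$. Writing out the convolution and factoring $(1+\abs{x})^{t}$ out of the integrand, this is equivalent to the estimate
\[
\int_{\Rdst}\phi(y)^{-t}\,dy\le K,
\qquad
\phi(y):=\frac{(1+\abs{y})(1+\abs{x-y})}{1+\abs{x}} .
\]
The elementary identity $(1+\abs{y})(1+\abs{x-y})=1+\abs{y}+\abs{x-y}+\abs{y}\abs{x-y}\ge 1+\abs{x}+\abs{y}\abs{x-y}$ shows that $\phi(y)\ge 1$ and, more usefully,
\[
\phi(y)\ge 1+\frac{\abs{y}\,\abs{x-y}}{1+\abs{x}} .
\]
I would stress at the outset what makes the stated $t$-uniformity delicate: the naive route, which bounds one of the two factors using $\max(\abs{y},\abs{x-y})\ge\abs{x}/2$ and hence $(1+\abs{x}/2)^{-t}\le 2^{t}(1+\abs{x})^{-t}$, loses a factor $2^{t}$ and therefore fails to produce a constant of order $\max\sett{1,1/(t-d)}$. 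The whole point is to extract the decay $(1+\abs{x})^{-t}$ without discarding the integrability of the remaining factor, and this is exactly the main obstacle to overcome.

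Next I would exploit symmetry. The involution $y\mapsto x-y$ preserves Lebesgue measure and the value of $\phi$, and it interchanges the set $\{\abs{y}\le\abs{x-y}\}$ with its complement; hence it suffices to bound $2\int_{\{\abs{y}\le\abs{x-y}\}}\phi(y)^{-t}\,dy$. On this region $\abs{x-y}\ge\max(\abs{y},\abs{x}/2)\ge\tfrac12\abs{y}+\tfrac14\abs{x}$, so $\abs{y}\abs{x-y}\ge\tfrac12\abs{y}^{2}+\tfrac14\abs{y}\abs{x}$ and therefore
\[
\phi(y)\ge 1+\frac{\abs{y}^{2}}{2(1+\abs{x})}+\frac{\abs{y}\,\abs{x}}{4(1+\abs{x})} .
\]

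I would then split into two regimes according to the size of $\abs{x}$. If $\abs{x}\ge 1$, the last term dominates: since $\abs{x}/(1+\abs{x})\ge\tfrac12$ we get $\phi(y)\ge 1+\tfrac18\abs{y}$, whence
\[
\int_{\{\abs{y}\le\abs{x-y}\}}\phi(y)^{-t}\,dy
\le\int_{\Rdst}\bigl(1+\tfrac18\abs{y}\bigr)^{-t}\,dy
=8^{d}\,\norm{w_{-t}}_{1}.
\]
If $\abs{x}<1$, the quadratic term already suffices: $\phi(y)\ge 1+\tfrac14\abs{y}^{2}$, and after the substitution $y=2z$ one checks that $\int_{\Rdst}(1+\tfrac14\abs{y}^{2})^{-t}\,dy=2^{d}\int_{\Rdst}(1+\abs{z}^{2})^{-t}\,dz$ is bounded by a constant depending only on $d$ for every $t>d$ (its only singularity sits at $t=d/2$, away from the range considered). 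To finish I would record the elementary estimate, obtained by passing to polar coordinates and splitting the radial integral at $r=1$, that
\[
\norm{w_{-t}}_{1}\le\omega_{d-1}\Bigl(\tfrac1d+\tfrac1{t-d}\Bigr)\lesssim\max\sett{1,\tfrac1{t-d}},
\]
for $t>d$, where $\omega_{d-1}$ is the surface measure of the unit sphere. Combining the two regimes gives $\int_{\Rdst}\phi^{-t}\,dy\le K$ with $K\lesssim\max\sett{1,1/(t-d)}$, which is the claim. As signalled above, the one genuinely delicate point is the sharp $t$-dependence: routing the argument through the quantity $\phi$ and the reformulated integral, rather than through the lossy pointwise comparison that trades away a factor $2^{t}$, is what makes the constant behave like $\max\sett{1,1/(t-d)}$ both as $t\to d^{+}$ and as $t\to\infty$.
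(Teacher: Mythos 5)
Your proof is correct; I checked each step. The reduction to bounding $\int_{\Rdst}\phi(y)^{-t}\,dy$ uniformly in $x$, the symmetrization $y\mapsto x-y$ (which preserves $\phi$ and swaps the two half-regions), the bound $\abs{y}\abs{x-y}\ge\tfrac12\abs{y}^2+\tfrac14\abs{y}\abs{x}$ on $\sett{\abs{y}\le\abs{x-y}}$, and the two regimes (linear term for $\abs{x}\ge1$, quadratic term for $\abs{x}<1$, the latter bounded uniformly in $t>d$ by monotonicity of $(1+\abs{z}^2)^{-t}$ in $t$) all hold, as does the polar-coordinate estimate $\norm{w_{-t}}_1\le\omega_{d-1}\left(\tfrac1d+\tfrac1{t-d}\right)$. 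Be aware, though, that the paper itself gives no proof of this lemma: it points to the literature, namely \cite[Lemma 11.1.1]{gr01}, \cite[Lemma 1]{ro09} and \cite{fe79}, for ``very similar statements''. The standard argument in those sources splits the integral over $\sett{\abs{y}\le\abs{x}/2}$ and its complement and uses $(1+\abs{x-y})^{-t}\le(1+\abs{x}/2)^{-t}\le 2^t(1+\abs{x})^{-t}$ on the appropriate piece; this proves subconvolutivity for each fixed $t>d$, but — exactly as you observe — it carries an extra factor $2^t$, so it gives the constant $\max\sett{1,1/(t-d)}$ only up to a $t$-dependent loss. Your detour through $\phi$ is what is needed if the implicit constant in the statement is to depend on $d$ alone, which is the only reading under which the stated asymptotics have content, and it correctly controls both limits $t\to d^+$ and $t\to\infty$. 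For the paper's applications the distinction is harmless, since there $t=s$ or $t=s+\alpha$ is fixed and constants are allowed to depend on these parameters; still, your argument is self-contained and strictly sharper than the cited ones, at the modest cost of the case analysis in $\abs{x}$.
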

There is a corresponding statement for relatively separated index sets. The important point is that the bounds depend only on the relative separation of the sets involved (and this quantity is translation invariant.)
\begin{lemma}
\label{conv_nodes}
Let $\Gamma \subseteq \RRd$ be a relatively separated set of points and let $t>d$.
Then, the following estimates hold for a constant $K \lesssim \max
\sett{1,1/(t-d)}$.
\begin{itemize}
\item[(a)] $\sum_{\gamma \in \Gamma} w_{-t}(\gamma) \leq K \rel(\Gamma)$,
\item[(b)] $\sum_{\gamma: \abs{\gamma}>M} w_{-t}(\gamma) \leq K \rel(\Gamma) M^{-(t-d)}$,
\item[(c)] $\sum_{\gamma \in \Gamma} w_{-t}(\gamma) w_{-t}(x-\gamma)
\leq K \rel(\Gamma) w_{-t}(x)$, for all $x \in \Rdst$.
\end{itemize}
\end{lemma}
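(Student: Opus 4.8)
The plan is to derive all three estimates from a single counting reduction. For $k\ge 0$ cover the annulus $\{\,y : |y|\in[k,k+1)\,\}$ by $\lesssim_d (1+k)^{d-1}$ translates of the unit cube $[0,1]^d$; each such cube is a translate of $[0,1]^d$ and therefore contains at most $\rel(\Gamma)$ points of $\Gamma$. Hence
\[
\#\{\,\gamma\in\Gamma : |\gamma|\in[k,k+1)\,\}\;\lesssim_d\;\rel(\Gamma)\,(1+k)^{d-1},
\]
and on that annulus $w_{-t}(\gamma)\le (1+k)^{-t}$. This is the mechanism that factors out $\rel(\Gamma)$ and reduces everything to one-dimensional radial sums, explaining why the constants depend only on $d$ and $t$.

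For part (a) I would sum the shell contributions, obtaining $\sum_{\gamma}w_{-t}(\gamma)\lesssim_d \rel(\Gamma)\sum_{k\ge0}(1+k)^{d-1-t}$, and evaluate the scalar sum by the monotone comparison $\sum_{k\ge0}f(k)\le f(0)+\int_0^\infty f$ with $f(k)=(1+k)^{d-1-t}$, which gives $1+\tfrac{1}{t-d}\lesssim\max\{1,1/(t-d)\}$. Part (b) is the same computation restricted to $k\gtrsim M$: binning the points by $|\gamma|\in[M+j,M+j+1)$ and anchoring the integral comparison at $M$ (via $\sum_{k\ge M}f(k)\le f(M)+\int_M^\infty f$) yields $M^{-(t-d)}\max\{1,1/(t-d)\}$. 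The one point requiring care is precisely this anchoring: any additive shift of the radial variable (for instance replacing $M$ by $M-\sqrt d$ when passing through a cube decomposition) would multiply the bound by a factor $2^{\,t-d}$ and destroy the uniformity in $t$, so the shells must be taken directly in the variable $|\gamma|$.

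Part (c) is the crux, and here I would combine two ingredients. First, submultiplicativity of $w_t$ gives the pointwise cap $w_{-t}(y)\,w_{-t}(x-y)=\big[(1+|y|)(1+|x-y|)\big]^{-t}\le(1+|x|)^{-t}=w_{-t}(x)$ for every $y$. Second, Lemma \ref{conv_bound} controls the continuous self-convolution by $\max\{1,1/(t-d)\}\,w_{-t}$. I would split $\Gamma$ into the $\lesssim_d\rel(\Gamma)$ points lying within a fixed ($d$-dependent) radius of $0$ or of $x$ — each bounded term-by-term by $w_{-t}(x)$ using the cap — and the off-diagonal remainder. The remainder I would estimate by binning simultaneously in $\mathrm{dist}(0,\cdot)$ and $\mathrm{dist}(x,\cdot)$, organized exactly as in the proof of Lemma \ref{conv_bound} and of parts (a) and (b): away from both centers the two weight factors genuinely decay, and their product is summed against the joint shell counts to reproduce $w_{-t}(x)$ times a convergent radial sum of size $\max\{1,1/(t-d)\}$.

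The main obstacle, acute in (c), is keeping the constant uniform in $t$. The tempting shortcut — dominating the discrete sum by the continuous integral of Lemma \ref{conv_bound} through a local-supremum (amalgam) majorant of $w_{-t}$ — fails, because that majorant is \emph{not} uniformly comparable to $w_{-t}$: the ratio blows up like $(1+c)^{\,t}$ near the origin and, more generally, wherever $|y|\lesssim t$, so a term-by-term passage to the integral loses a factor exponential in $t$. Equally, contracting the two off-diagonal factors to a single scalar via $(1+\mathrm{dist}(0,Q_n))(1+\mathrm{dist}(x,Q_n))\gtrsim_d 1+|x|$ reintroduces the same $2^{\,t}$ loss. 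The resolution is to reserve the pointwise cap strictly for the bounded diagonal neighborhoods (finitely many cubes) and to carry out the shell sums honestly, never replacing the off-diagonal factor by its worst value on a whole shell; only then do the geometric decay and the submultiplicative structure recombine into the sharp constant $\max\{1,1/(t-d)\}$.
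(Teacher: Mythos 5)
Since the paper itself does not prove Lemma \ref{conv_nodes} (it only points to \cite{gr01}, \cite{ro09} and \cite{fe79}), your argument has to stand on its own. Parts (a) and (b) do: the annulus count $\#\sett{\gamma \in \Gamma : \abs{\gamma}\in[k,k+1)}\lesssim_d \rel(\Gamma)(1+k)^{d-1}$, the monotone comparison with $\int_0^\infty(1+u)^{d-1-t}\,du$, and the anchoring of (b) at $M$ in the radial variable are all correct, as is your observation that an additive shift of the radius would cost a factor exponential in $t$.

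In part (c), however, there is a genuine gap, and it sits exactly where you locate the crux. You correctly discard every multiplicative shortcut (each costs $2^t$ or $(1+c)^t$), but the positive mechanism you offer instead --- summing the product of the two decaying factors ``against the joint shell counts'' --- is never made precise, and as described it runs into the very loss you are trying to avoid. On a bi-shell $\abs{\gamma}\approx k$, $\abs{x-\gamma}\approx m$, the information you have recorded is the count $\lesssim_d\rel(\Gamma)\min\sett{(1+k)^{d-1},(1+m)^{d-1}}$ and the triangle constraint $k+m\ge\abs{x}-2$; but $[(1+k)(1+m)]^{-t}$ combined with $1+k+m\ge\abs{x}-1$ only yields $(\abs{x}-1)^{-t}$, and the ratio $(\abs{x}-1)^{-t}/(1+\abs{x})^{-t}$ is again exponential in $t$ for moderate $\abs{x}$. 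The missing idea is an inequality that keeps the gain \emph{additive inside} the bracket, for instance: if $\abs{y}\le\abs{x-y}$ and $\abs{x-y}\ge 1/2$, then
\[
(1+\abs{y})\,(1+\abs{x-y})\;\ge\;(1+\abs{x})\left(1+\tfrac{1}{4}\abs{y}\right),
\]
which after expanding (and using $\abs{x}\le\abs{y}+\abs{x-y}$) reduces to $3\abs{x-y}\ge 1+\abs{y}$. Granting this, (c) follows with no joint binning at all: split $\Gamma$ according to which of $0,x$ is nearer; the points within distance $1/2$ of the nearer center number $\lesssim_d\rel(\Gamma)$ and are handled by your pointwise cap; for every other point the displayed inequality gives $w_{-t}(\gamma)w_{-t}(x-\gamma)\le w_{-t}(x)\bigl(1+\tfrac14\min\sett{\abs{\gamma},\abs{x-\gamma}}\bigr)^{-t}$, and the sum of $\bigl(1+\tfrac14\,\mathrm{dist}\bigr)^{-t}$ over shells around the nearer center is $\lesssim_d\rel(\Gamma)\max\sett{1,1/(t-d)}$ exactly as in your part (a), the factor $4$ entering the constant as $4^{d}$, not $4^{t}$. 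Your architecture and your diagnosis of the pitfalls are right, but until this inequality (or an equivalent) is exhibited, the step where ``the geometric decay and the submultiplicative structure recombine'' is an assertion rather than a proof.
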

For proofs of very similar statements see \cite[Lemma 11.1.1]{gr01},
\cite[Lemma 1]{ro09} and \cite{fe79}.

\subsection{Amalgam spaces}
We denote by $\DRd$ the set of all $C^\infty$, compactly supported,
complex-valued functions on $\Rdst$, by $C^0(\Rdst)$ the set of all
continuous functions vanishing at infinity and by $\SchRd$ the Schwartz
class.

Let $\BspN$ be a \emph{uniformly localizable, isometrically translation invariant} Banach space. That is, $\Bsp$ satisfies the following axioms.
\begin{itemize}
 \item $\SchRd \hookrightarrow \Bsp \hookrightarrow \SchpRd$ are continuous
embeddings whose composition is the canonical embedding $\SchRd
\hookrightarrow \SchpRd$.
 \item If $h \in \DRd$ and $f \in \Bsp$, then $hf \in \Bsp$ and
there is a constant $C=C(h)>0$ such that $\norm{h(\cdot-x) f}_{\Bsp} \leq C
\norm{f}_{\Bsp}$, for all $x \in \Rd$ and $f \in \Bsp$.
 \item If $f \in \Bsp$ and $x \in \Rd$, then $f(\cdot-x) \in \Bsp$ and
$\norm{f(\cdot-x)}_{\Bsp}=\norm{f}_{\Bsp}$.
 \item Complex conjugation defines an isometry on $\Bsp$. That is,
if $f \in \Bsp$, then $\overline{f} \in \Bsp$ and $\norm{f}_\Bsp = \norm{\overline{f}}_\Bsp$.
\end{itemize}

We consider the space of distributions that belong to $\Bsp$ locally,
\begin{align*}
\Bsploc := \set{f \in \SchpRd}{hf \in \Bsp \mbox{, for all }h \in \DRd}.
\end{align*}
Given $f \in \Bsploc$ and a non-zero window $\eta \in \DRd$, we consider the
\emph{control function}
\begin{align*}
F(x):= \norm{f \eta(\cdot-x)},
\quad (x \in \Rdst.)
\end{align*}
For a space of functions $\Esp \hookrightarrow L^1_{\textit{loc}}$, the
Wiener amalgam space
$W(\Bsp,\Esp)$ is defined by
\begin{align*}
W(\Bsp,\Esp) := \set{f \in \Bsploc}{F \in \Esp},
\end{align*}
and is given the norm $\norm{f}_{W(\Bsp,\Esp)} := \norm{F}_\Esp$. The space
$\Esp$ is normally assumed to be \emph{solid} and \emph{translation
invariant}. The amalgam $W(\Bsp,\Esp)$ is then independent of the choice of
the window $\eta$ in the sense that different windows yield equivalent norms
(cf. \cite{fe83} and \cite{fe92-3}.)
In this article we will always let $\Esp$ be a weighed $L^p$ space.
\subsection{Amalgam norm of families}
We will consider a relatively separated set of points $\Lambda \subseteq
\Rd$, which will be called \emph{nodes} and a symmetric, submultiplicative,
continuous weight $w: \Rd \to (0,+\infty)$. We will also consider a family of
measurable functions $f_k: \Rd \to \bC$ indexed by the set of nodes
$\Lambda$.

For a family $F \equiv \sett{f_k}_{k \in \Lambda} \subseteq \Bsploc$ 
we define its $W(\Bsp,L^1_w)$ norm by
\begin{align*}
\norm{F}_{W(\Bsp, L^1_w)} := \max \sett{\sup_k \norm{g_k}_1, \supess_x \sum_k \abs{g_k(x)}},
\\
\mbox{where }
g_k(x) := \norm{f_k \eta(\cdot-x)}_{\Bsp} w(x-k),
\quad (x \in \Rdst, k \in \Lambda.)
\end{align*}
Here, $\eta \in \DRd$ is any nonzero window function (see Prop. \ref{norm_generalizes} below.)

Observe that if $\norm{F}_{W(\Bsp, L^1_w)} < +\infty$, then each $f_k$ belongs
to $W(\Bsp, L^1_w)$. The estimate $\norm{F}_{W(\Bsp, L^1_w)} < +\infty$
grants, in addition, certain uniformity for the set $\sett{f_k}_k$,
similar to that shared by the translates of an individual atom. Some results
to come will give evidence of that. The following proposition shows
that, at least, the hypothesis $\norm{F}_{W(\Bsp, L^1_w)} < +\infty$ indeed
extends to more general families $F$,
the condition $\norm{f}_{W(\Bsp, L^1_w)} < \infty$ normally imposed on families produced by translation of a single generator $f$. Before showing that, we must prove the independence of the window function in the definition above.

\begin{prop}
\label{norm_generalizes}
Let a family $F \equiv \sett{f_k}_{k \in \Lambda} \subseteq \Bsploc$
be given.
\begin{itemize}
 \item[(a)] Let $\norm{F}_{W_i(\Bsp, L^1_w)}$ be the norm defined using a nonzero window function $\eta_i \in \DRd$, (i=1,2). Then $\norm{F}_{W_1(\Bsp, L^1_w)} \approx \norm{F}_{W_2(\Bsp, L^1_w)}$.
 \item[(b)] For any bounded set $Q \subset \Rdst$ with non-empty interior,
the norm $\norm{F}_{W(\Bsp, L^1_w)}$ is also equivalent to the norm
 $\norm{F}_{\widetilde{W}(\Bsp, L^1_w)}$ defined by
 \begin{align*}
 \norm{F}_{\widetilde{W}(\Bsp, L^1_w)} &:= \max \sett{\sup_k \norm{g_k}_1, \supess_x \sum_k \abs{g_k(x)}},
 \\
 \mbox{where }
 g_k(x) &:= \norm{f_k}_{\Bsp(Q+x)} w(x-k),
 \quad x \in \Rd, k \in \Lambda,
 \\
 \mbox{and } \norm{f}_{\Bsp(Q)} &:= \inf \sett{\norm{g}_\Bsp: g \equiv f \mbox{ on Q}}.
 \end{align*}

 \item[(c)] If the family $F$ is given by $f_k = f(\cdot - k), k \in \Lambda$
and $\Lambda$ is relatively separated, 
then $\norm{F}_{W(\Bsp, L^1_w)} \approx \norm{f}_{W(\Bsp, L^1_w)}$.
\end{itemize}
\end{prop}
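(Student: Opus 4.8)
The plan is to treat the three parts in order, reducing each to pointwise estimates on the control functions $F_k^{(i)}(x):=\norm{f_k\eta_i(\cdot-x)}_\Bsp$ and then feeding those estimates \emph{separately} into the two components $\sup_k\norm{g_k}_1$ and $\supess_x\sum_k g_k(x)$ that make up the norm. The unifying mechanism throughout is the multiplier axiom on $\Bsp$, applied uniformly over bounded families of $\DRd$-windows, together with the submultiplicativity of $w$ to relocate the weight factor.

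For (a), I would fix nonzero $\eta_1,\eta_2\in\DRd$ and realize $\eta_2(\cdot-x)$ as an average of translates of $\eta_1$. Setting $\phi:=|\eta_1|^2\in\DRd$, since $\int\phi>0$ I can pick $\Psi\in\DRd$ with $\phi*\Psi\equiv 1$ on $\supp\eta_2$, which gives the identity $f_k\eta_2(\cdot-x)=\int f_k\eta_1(\cdot-s)\,H_{x-s}(\cdot-s)\,\Psi(s-x)\,ds$, where $H_u:=\overline{\eta_1}\,\eta_2(\cdot-u)\in\DRd$ is supported in a fixed ball for $u\in\supp\Psi$. Taking $\Bsp$-norms under the integral and invoking the multiplier axiom uniformly over the bounded family $\{H_u\}_{u\in\supp\Psi}$ yields the pointwise bound $F_k^{(2)}(x)\lesssim(|\Psi|*F_k^{(1)})(x)$. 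Multiplying by $w(x-k)$ and using $w(x-k)\le w(x-s)w(s-k)$ turns this into $g_k^{(2)}\lesssim\Theta*g_k^{(1)}$ for the fixed compactly supported kernel $\Theta(u):=|\Psi(-u)|w(u)\in L^1$. Young's inequality then bounds the first component, $\norm{g_k^{(2)}}_1\le\norm\Theta_1\norm{g_k^{(1)}}_1$, while $\sum_k g_k^{(2)}\le\Theta*(\sum_k g_k^{(1)})$ pointwise controls the second; symmetry in $\eta_1,\eta_2$ gives the equivalence.

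For (b), the inequality $\norm F_{\widetilde W}\le\norm F_W$ is immediate once, using (a), I choose the window $\eta$ with $\eta\equiv 1$ on a neighbourhood of $Q$: then $f_k\eta(\cdot-x)$ is an admissible competitor for the infimum defining $\norm{f_k}_{\Bsp(Q+x)}$, so $\widetilde g_k(x)\le g_k(x)$. For the reverse I would cover $\supp\eta(\cdot-x)\subseteq B_R(x)$ by finitely many translates $Q+x+z_1,\dots,Q+x+z_N$ (possible since $Q$ has nonempty interior) and fix a smooth partition of unity $\psi_j$ subordinate to the cover, with the $\psi_j$ being translates of finitely many fixed shapes. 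Choosing near-optimal $g_{k,j}\in\Bsp$ equal to $f_k$ on $Q+x+z_j$, the decomposition $f_k\eta(\cdot-x)=\sum_j\psi_j\eta(\cdot-x)\,g_{k,j}$ and the multiplier axiom give $F_k(x)\lesssim\sum_j\norm{f_k}_{\Bsp(Q+x+z_j)}$; relocating the weight by $w(x-k)\le w(z_j)w(x+z_j-k)$ yields $g_k(x)\lesssim\sum_j\widetilde g_k(x+z_j)$, and summing over the finite index $j$ controls both components of $\norm F_W$ by $\norm F_{\widetilde W}$.

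For (c), writing $f_k=f(\cdot-k)$ and using translation invariance of $\norm\cdot_\Bsp$ gives $g_k(x)=G(x-k)$ with $G:=\Phi\,w$ and $\Phi(u)=\norm{f\eta(\cdot-u)}_\Bsp$ the control function of $f$, so $\norm G_1=\norm f_{W(\Bsp,L^1_w)}$. The first component is then exactly $\sup_k\norm{g_k}_1=\norm G_1$, which already yields $\norm f_{W}\le\norm F_W$. The remaining and genuinely delicate point, where I expect the main obstacle, is to bound the second component $\supess_x\sum_{k\in\Lambda}G(x-k)$ by a constant times $\norm G_1$: an $L^1_w$ bound alone cannot control such a sum. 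I would first upgrade $\Phi$ to the amalgam $W(L^\infty,\ell^1_w)$ by exploiting the self-improving regularity of control functions, comparing $\sup_{u\in Q_0+n}\Phi(u)$ with the value of a control function built from a slightly enlarged window $\eta^\#$ (again via the multiplier axiom, uniformly over unit-cube translations) and using window-independence from (a) to see that $\norm\Phi_{W(L^\infty,\ell^1_w)}\approx\norm f_{W}$. Once $G\in W(L^\infty,\ell^1_w)$, the relative separation of $\Lambda$ supplies the sampling estimate $\supess_x\sum_{k\in\Lambda}G(x-k)\lesssim\rel(\Lambda)\norm G_{W(L^\infty,\ell^1_w)}$, closing the argument. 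This reconciliation of the continuous $L^1_w$ norm with the discrete summation over nodes is the crux, and is the analogue in the present non-solid setting of the classical robustness of Wiener amalgam norms.
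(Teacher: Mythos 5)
Your overall strategy coincides with the paper's: in every part the engine is the multiplier axiom applied to translates of fixed test functions, submultiplicativity of $w$ to relocate the weight factor, and the relative separation of $\Lambda$ to absorb the discretization in (c). Parts (b) and (c) are correct, with execution that differs from the paper's in instructive ways. In (b), for the reverse inequality, the paper avoids your covering-plus-partition-of-unity argument entirely: since $Q$ has nonempty interior it takes a window $\eta$ supported \emph{inside} $Q$, notes that $\norm{f_k\eta(\cdot-x)}_\Bsp = \norm{h\eta(\cdot-x)}_\Bsp \lesssim \norm{h}_\Bsp$ for every competitor $h \equiv f_k$ on $Q+x$, and then invokes (a) for window independence; your route is sound but buys nothing for the extra bookkeeping. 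In (c), your two-step plan (upgrade the control function to $W(L^\infty,\ell^1_w)$ by comparing it against a control function with an enlarged window, then sample along $\Lambda$ at the cost of a factor $\rel(\Lambda)$) is the same mechanism the paper compresses into a single computation: it writes $g_k(x)$ as an average over the unit cube, inserts a window $\theta \equiv 1$ on $\supp(\eta)+Q$ via the multiplier axiom, and converts $\sum_k$ into an integral using $\sum_k \chi_{Q+x-k} \leq \rel(\Lambda)$. Your identification of this sum-versus-integral reconciliation as the crux of (c) is exactly right.

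The step you should repair is in (a). You represent $\eta_2(\cdot-x)$ as a \emph{continuous} average of translates of $\eta_1$ and then ``take $\Bsp$-norms under the integral.'' That is a Minkowski inequality for a $\Bsp$-valued integral, and the stated axioms do not justify it: $\Bsp$ is an abstract Banach space of distributions, translation is isometric but not assumed strongly continuous, $\Bsp$ is not assumed separable, and no Fatou-type property (lower semicontinuity of the norm under distributional convergence) is assumed. Hence the integrand $s \mapsto f_k\,\eta_1(\cdot-s)\,H_{x-s}(\cdot-s)\,\Psi(s-x)$ need not be Bochner integrable in $\Bsp$, and one cannot pass from the pointwise (distributional) identity to the norm estimate without an additional argument. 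The fix is to discretize the reproduction identity, which is precisely what the paper does: choose $\alpha>0$ so that $\sum_{j\in\Zdst} \abs{\eta_2}^2(\cdot-\alpha j) \approx 1$, whence $\eta_1 = \sum_{j=1}^n m_j\,\eta_2(\cdot-x_j)$ with finitely many $m_j \in \DRd$ and $x_j \in \alpha\Zdst$; the pointwise bound $g_k^{(1)} \lesssim$ (finitely many weighted shifts of $g_k^{(2)}$) then follows term by term from the multiplier axiom, with no vector-valued integration, and your Young-type treatment of the two components and the symmetry argument go through verbatim. Note that this repair matters globally, since both your (b) and your (c) route through the window independence established in (a).
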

\begin{rem}
The implicit constant on (c) depends on the relative separation of $\Lambda$.
\end{rem}

\begin{proof}
For (a), since $\eta_2$ is compactly supported and not identically 0, it is possible to choose $\alpha>0$ such that $\sum_{j \in \Zdst} \abs{\eta_2}^2(\cdot- \alpha j) \approx 1$. This series is locally
finite, so the function $m := \eta_1 \left(\sum_{j \in \Zdst} \abs{\eta_2}^2(\cdot- \alpha j)\right)^{-1}$ is smooth.
Choose $\theta \in \DRd$ such that $\theta \equiv 1$ on the support of $\eta_1$.
Now, 
\begin{align*}
\eta_1=\theta \eta_1
= \sum_{j \in \Zdst} \theta m \abs{\eta_2}^2(\cdot- \alpha j). 
\end{align*}
Since both $\theta$ and $\eta_2$ are compactly supported, only finitely many terms are not zero and me may write
\[
\eta_1= \sum_{j=1}^n m_j \eta_2(\cdot-x_j),
\]
where $x_j \in \alpha \Zdst$ and $m_j := \theta m \overline{\eta_2(\cdot-x_j)} \in \DRd$.

Now, for $x \in \Rd$, and $k \in \Lambda$,
\begin{align*}
\norm{f_k \eta_1(\cdot-x)}_{\Bsp} w(x-k)
&\lesssim \sum_{j=1}^n \norm{f_k \eta_2(\cdot-x-x_j)}_{\Bsp} w(x-k)
\\
&\lesssim \sum_{j=1}^n \norm{f_k \eta_2(\cdot-(x+x_j))}_{\Bsp} w((x+x_j)-k) w(x_j)
\end{align*}
Consequently,
\[\norm{F}_{W_1(\Bsp,L^1_w)} \lesssim \norm{F}_{W_2(\Bsp,L^1_w)}.\]
The other inequality follows by symmetry.
\smallskip

To prove (b), consider first a window $\eta \in \DRd$ such that $\eta \equiv 1$ on $Q$.
Then for any $k \in \Lambda$ and $x \in \Rdst$, $\norm{f_k}_{\Bsp(Q+x)} \leq \norm{f_k \eta(\cdot-x)}_{\Bsp}$ and it follows that
$\norm{F}_{\widetilde{W}(\Bsp, L^1_w)} \lesssim
\norm{F}_{W(\Bsp, L^1_w)}$.

For the other inequality, since $Q$ has non-empty interior, there exists a non-zero window
function $\eta \in \DRd$ supported on $Q$. For any $k \in \Lambda$, $x \in \Rdst$
and any $h \in \Bsp$ such that $h \equiv f_k$ on $Q+x$, we have
\[
\norm{f_k \eta(\cdot-x)}_\Bsp = \norm{h \eta(\cdot-x)}_\Bsp \lesssim
\norm{h}_\Bsp.
\]
Therefore, $\norm{f_k \eta(\cdot-x)}_\Bsp \lesssim \norm{f_k}_{\Bsp(Q+x)}$, and the
desired inequality follows.

Let us now prove (c). For $x \in \Rd$ and $k \in \Lambda$, since
$\Bsp$ is isometrically translation invariant,
\[
g_k(x) = \norm{f(\cdot-k) \eta(\cdot-x)}_{\Bsp} w(x-k)
= \norm{f \eta(\cdot-(x-k))}_{\Bsp} w(x-k).
\]
Integrating over $x$ we get that for any $k \in \Lambda$,
\begin{equation}
\label{norm1_translation}
\norm{f}_{W(\Bsp, L^1_w)} = \norm{g_k}_1.
\end{equation}
This shows that $\norm{f}_{W(\Bsp, L^1_w)} \leq \norm{F}_{W(\Bsp, L^1_w)}$.

Since by \eqref{norm1_translation} we know that $\sup_k \norm{g_k}_1 \leq \norm{f}_{W(\Bsp, L^1_w)}$,
it suffices to show that $\supess_x \sum_k g_k(x) \lesssim \norm{f}_{W(\Bsp, L^1_w)}$.

To this end, let us call $Q$ the unitary cube centered at 0 and
let $\theta \in \DRd$ be such that $\theta \equiv 1$ on $\supp(\eta)+Q$.
For $x \in \Rd$, and $k \in \Lambda$,
\begin{align*}
g_k(x) &= \norm{f \eta(\cdot-(x-k))}_{\Bsp} w(x-k) 
= \int_Q \norm{f \eta(\cdot-(x-k))}_{\Bsp} w(x-k) dy
\\
&= \int_Q \norm{f \eta(\cdot-(x-k) \theta(\cdot-(x+y-k)}_{\Bsp} w(x-k) dy
\\
&\lesssim \int_Q \norm{f \theta(\cdot-(x+y-k))}_{\Bsp} w(x-k) dy
\\
&= \int_{Q+x-k} \norm{f \theta(\cdot-y)}_{\Bsp} w(x-k) dy
\end{align*}
Since $w$ is bounded on $Q$,
for $y \in Q+x-k$, $w(x-k) \leq w(y) \sup_Q w$.
Therefore, for any $x \in \Rd$,
\begin{align*}
\sum_k g_k(x) &\lesssim \sum_k \int_{Q+x-k} \norm{f \theta(\cdot-y)}_{\Bsp} w(y) dy
\\& = \int_\Rd \norm{f \theta(\cdot-y)}_{\Bsp} w(y) \sum_k \chi_{Q+x-k}(y) dy
\end{align*}
Finally, observe that $\sum_k \chi_{Q+x-k}(y)$ is bounded by the relative separation
of the set of nodes $\Lambda$. This completes the proof.
\end{proof}
\begin{example}
\label{poly_amalgam}
As an easy example of amalgam norm of families, consider a relatively separated set of nodes $\Lambda \subseteq \Rdst$, and a family of measurable functions $f_k: \Rdst \to \bC$, $k \in \Lambda$ satisfying the concentration condition,
\begin{equation}
\label{poly_example}
\abs{f_k(x)} \leq C w_{-(s+\alpha)}(x-k),
\quad x\in\Rdst, k \in \Lambda,
\end{equation}
for some $s>d$ and $\alpha \geq 0$.

Let $Q:=[0,1]^d$ be the unit cube. From equation \eqref{poly_example} we get that
for any $x \in \Rdst$,
\begin{align*}
\norm{f_k}_{L^\infty(Q+x)} \leq C \norm{w_{-(s+\alpha)}}_{L^\infty(Q+(x-k))}
\lesssim C w_{-(s+\alpha)}(x-k),
\end{align*}
where the implicit constant depends on $s+\alpha$. Therefore,
\begin{align*}
\norm{f_k}_{L^\infty(Q+x)} w_\alpha(x-k) \lesssim C w_{-s}(x-k).
\end{align*}
Hence by Proposition \ref{norm_generalizes} and Lemma \ref{conv_nodes},
$\norm{F}_{W(L^\infty,L^1_{w_\alpha})} \lesssim C \rel(\Lambda)$.
\end{example}
However, the concentration condition in Equation \eqref{poly_example} is much
more precise than the last statement. We will need this stronger condition
in Section \ref{sec_surgery}.
\subsection{Multiplier theorems}
\label{mult}
We now introduce a number of multiplier results that will replace in the
applications the convolution relations for amalgam spaces. These are easily
established for some endpoint spaces and then generalized by interpolation.
Throughout this section we will assume the following.
\begin{itemize}
 \item A relatively separated set of nodes $\Lambda \subseteq \Rd$ is given.
 \item $\Bsp$ is a uniformly localizable, isometrically translation invariant, Banach space.
 \item $w: \Rd \to (0, +\infty)$ is a symmetric, submultiplicative, continuous weight.
 \item $v: \Rd \to (0, +\infty)$ is a symmetric weight moderated by $w$.
\end{itemize}
We first show that the synthesis of well-localized atoms is bounded with respect to amalgam space norms.
\begin{prop}
\label{mult_synth}
Let a family $F \equiv \sett{f_k}_{k \in \Lambda} \subseteq \Bsploc$
such that $\norm{F}_{W(\Bsp, L^1_w)} < +\infty$
and $c \in \el^p_v$ be given ($1 \leq p \leq \infty$).
Then, the series
\[
c \cdot F := \sum_k c_k f_k,
\]
converges in $W(\Bsp, L^p_v)$ and satisfies the following estimate,
\[
\norm{c \cdot F}_{W(\Bsp, L^p_v)}
\lesssim \norm{c}_{\ell^p_v} \norm{F}_{W(\Bsp, L^1_w)}.
\]
\end{prop}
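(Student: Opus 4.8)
The plan is to prove the a priori estimate first for finitely supported coefficient sequences and then recover both the convergence and the general bound by a density-and-completeness argument. Throughout I fix a single nonzero window $\eta \in \DRd$ and use it to compute every amalgam norm in sight; by Proposition \ref{norm_generalizes}(a) this costs nothing. For finitely supported $c$ the element $c \cdot F = \sum_k c_k f_k$ is a finite combination of members of $\Bsploc$, hence lies in $\Bsploc$, and the subadditivity of the $\Bsp$-norm gives the pointwise domination of its control function,
\[
G(x) := \norm{(c \cdot F)\,\eta(\cdot - x)}_\Bsp \leq \sum_k \abs{c_k}\, \norm{f_k\, \eta(\cdot - x)}_\Bsp .
\]
Since $\norm{c \cdot F}_{W(\Bsp, L^p_v)} = \norm{G}_{L^p_v} = \norm{G\,v}_{L^p}$, the task is to estimate $\norm{G\,v}_{L^p}$.

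The key algebraic step is to transfer the global weight $v(x)$ onto the nodes. Because $v$ is moderated by $w$, writing $x = k + (x-k)$ yields $v(x) \leq C\, v(k)\, w(x-k)$. Recalling from the definition of $\norm{F}_{W(\Bsp, L^1_w)}$ that $g_k(x) = \norm{f_k\, \eta(\cdot - x)}_\Bsp\, w(x-k)$, this gives the pointwise bound
\[
G(x)\, v(x) \leq C \sum_k \big(\abs{c_k}\, v(k)\big)\, g_k(x).
\]
Setting $d_k := \abs{c_k}\, v(k)$, so that $\norm{d}_{\ell^p} = \norm{c}_{\ell^p_v}$, the problem reduces to bounding the synthesis operator $T d := \sum_k d_k\, g_k$ from $\ell^p$ into $L^p$, since then $\norm{c \cdot F}_{W(\Bsp, L^p_v)} \leq C\, \norm{T d}_{L^p}$.

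The main step, and the only real obstacle, is the $L^p$ boundedness of $T$ for intermediate $p$; this is exactly where the two terms of the family amalgam norm are used together. Writing $M := \norm{F}_{W(\Bsp, L^1_w)}$, these two terms read off as the two hypotheses of a Schur-type estimate: the quantity $\sup_k \norm{g_k}_1 \leq M$ controls the $x$-integral of the kernel $g_k(x)$ for each fixed $k$, while $\supess_x \sum_k \abs{g_k(x)} \leq M$ controls its sum over $k$ for each fixed $x$. At the endpoints this is immediate: for $p=1$ the first condition gives $\norm{T d}_1 \leq \sum_k \abs{d_k}\, \norm{g_k}_1 \leq M\, \norm{d}_1$, and for $p=\infty$ the second gives $\norm{T d}_\infty \leq M\, \norm{d}_\infty$. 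Complex interpolation between these endpoints, using $[\ell^1, \ell^\infty]_\theta = \ell^p$ and $[L^1, L^\infty]_\theta = L^p$ with $1/p = 1-\theta$, yields $\norm{T}_{\ell^p \to L^p} \leq M$ (equivalently, one may invoke the Schur test for $L^p$ directly). Combining the displays, $\norm{c \cdot F}_{W(\Bsp, L^p_v)} \lesssim M\, \norm{d}_{\ell^p} = \norm{F}_{W(\Bsp, L^1_w)}\, \norm{c}_{\ell^p_v}$ for finitely supported $c$.

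It remains to pass to arbitrary $c \in \el^p_v$ and to deduce convergence. The space $W(\Bsp, L^p_v)$ is complete because $\Bsp$ and $L^p_v$ are. Given $c \in \el^p_v$, approximate it in $\ell^p_v$-norm by finitely supported sequences $c^{(n)}$; this is possible precisely because $\el^p_v$ is defined as the closure of the finitely supported sequences, which is exactly why this space is used and why $p=\infty$ needs no separate treatment. Applying the finite-sum estimate to the differences $c^{(n)} - c^{(m)}$ shows that the corresponding partial sums form a Cauchy net in $W(\Bsp, L^p_v)$; their limit defines $c \cdot F$, establishes the convergence of the series, and, by passing to the limit, extends the estimate to all of $\el^p_v$. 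The remaining content beyond the interpolation step is thus only the moderation-based weight transfer and this routine completeness argument.
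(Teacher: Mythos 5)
Your proof is correct and follows essentially the same route as the paper's: reduce to finitely supported $c$, dominate the control function pointwise, transfer the weight $v$ onto the nodes via the moderation inequality \eqref{moderated_weight}, and conclude with a Schur-type bound plus a completeness/density argument. The only cosmetic difference is that you re-derive the $\ell^p \to L^p$ estimate by endpoint interpolation, whereas the paper cites it directly as Lemma \ref{schur}(a) — which is itself the same interpolation statement.
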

\begin{rem}
The implicit constant is the constant in \eqref{moderated_weight}.
\end{rem}
\begin{rem}
If $c \in \ell^\infty_v$, then the same conclusion holds but the series is only weak* convergent.
\end{rem}
\begin{proof}
We will assume that the sequence $c$ is finitely supported. The general case follows from this one by approximation, using the completeness of $W(\Bsp, L^1_v)$ (see \cite{fe83}.)

Let us set $f := c \cdot F = \sum_k c_k f_k$. For a window function $\eta \in \DRd$ and $x \in \Rd$, we have $f \eta(\cdot-x) = \sum_k c_k f_k \eta(\cdot-x)$. Therefore
\[
\norm{f \eta(\cdot-x)}_{\Bsp} v(x)
\leq C \sum_k \abs{c_k} v(k) \norm{f_k \eta(\cdot-x) }_{\Bsp} w(x-k),
\]
where the constant $C$ is the constant in \eqref{moderated_weight}.

Now Schur's lemma (see below) yields the desired inequality.
\end{proof}
In the proof we used part (a) of the following interpolation lemma which we quote for completeness. For a proof see \cite[Theorem 1.3.4]{gr04-3}.
\begin{lemma}
\label{schur}
Let $F \equiv \sett{f_k}_k$ be a family of measurable functions on $\Rdst$
and let $1 \leq p \leq \infty$.
\begin{itemize}
\item[(a)] Let $\sett{c_k}_k \subseteq \bC$ be a sequence. Then,
\[
\norm{c \cdot F}_{L^p} \leq \norm{c}_{\ell^p} \left( \sup_k \norm{f_k}_{L^1} \right)^{1/p}
\left( \supess_{x \in \RR^d} \sum_k \abs{f_k(x)} \right)^{1/p'}.
\]
\item[(b)] Let $g: \Rdst \to [0,\infty]$ be a measurable function. Then,
\[
\norm{g \cdot F}_{\ell^p} \leq \norm{g}_{L^p}
\left( \sup_k \norm{f_k}_{L^1} \right)^{1/p'}
\left( \supess_{x \in \RR^d} \sum_k \abs{f_k(x)} \right)^{1/p},
\]
where
\[
(g \cdot F)_k := \int_\Rdst g(x) f_k(x) dx.
\]
\end{itemize}
For both statements, if $p=1$, we interpret $1/p'=0$.
\end{lemma}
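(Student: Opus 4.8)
The plan is to recognize both inequalities as a Schur-type test for the ``mixed'' kernel $(x,k) \mapsto \abs{f_k(x)}$, whose two natural Schur constants are exactly the column quantity $\sup_k \norm{f_k}_{L^1}$ (integrate in $x$, take the sup in $k$) and the row quantity $\supess_x \sum_k \abs{f_k(x)}$ (sum in $k$, take the sup in $x$). Since every quantity on the right-hand side is unchanged when $f_k$, $c_k$ and $g$ are replaced by their absolute values, and since $\abs{(c\cdot F)(x)} \leq \sum_k \abs{c_k}\,\abs{f_k(x)}$ and $\abs{(g\cdot F)_k}\leq\int_{\Rdst} g\,\abs{f_k}$, I would first reduce to the case where all quantities are nonnegative; this also makes Tonelli's theorem available and removes any concern about convergence of the defining series or integrals. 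The whole lemma then follows from a single, well-chosen application of Hölder's inequality, used once over the index $k$ for part (a) and once over the variable $x$ for part (b). One may equivalently read the result as Riesz--Thorin interpolation between the trivial endpoints $p=1$ and $p=\infty$, but the direct Hölder argument is self-contained, so that is what I would present.

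For part (a), I would start from $\abs{(c \cdot F)(x)} \leq \sum_k \abs{c_k}\,\abs{f_k(x)}$ and split each summand as $\abs{c_k}\,\abs{f_k(x)} = \bigl(\abs{c_k}\,\abs{f_k(x)}^{1/p}\bigr)\,\abs{f_k(x)}^{1/p'}$. Hölder in $k$ with exponents $p$ and $p'$ gives $\abs{(c\cdot F)(x)}^p \leq \bigl(\sum_k \abs{c_k}^p \abs{f_k(x)}\bigr)\bigl(\sum_k \abs{f_k(x)}\bigr)^{p/p'}$. Bounding the second factor by $\bigl(\supess_y \sum_k \abs{f_k(y)}\bigr)^{p/p'}$, integrating in $x$, and using Tonelli to interchange sum and integral, the first factor contributes $\sum_k \abs{c_k}^p \norm{f_k}_{L^1} \leq \norm{c}_{\ell^p}^p\,\sup_k \norm{f_k}_{L^1}$. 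Taking $p$-th roots yields precisely the asserted bound, with the powers $1/p$ and $1/p'$ attached to $\sup_k \norm{f_k}_{L^1}$ and $\supess_x \sum_k \abs{f_k(x)}$ respectively.

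Part (b) is the transpose of this computation, with the roles of summation in $k$ and integration in $x$ interchanged. I would bound $\abs{(g\cdot F)_k} \leq \int_{\Rdst} g(x)\abs{f_k(x)}\,dx$, split the integrand as $\bigl(g(x)\abs{f_k(x)}^{1/p}\bigr)\,\abs{f_k(x)}^{1/p'}$, and apply Hölder in $x$ with exponents $p$ and $p'$, obtaining $\abs{(g\cdot F)_k}^p \leq \bigl(\int g^p \abs{f_k}\bigr)\norm{f_k}_{L^1}^{p/p'}$. Summing in $k$, bounding $\norm{f_k}_{L^1}$ by $\sup_j \norm{f_j}_{L^1}$ and $\sum_k \abs{f_k(x)}$ by $\supess_y \sum_k \abs{f_k(y)}$, and taking $p$-th roots gives the claim; note that the exponents now attach in the opposite way, $1/p'$ to $\sup_k\norm{f_k}_{L^1}$ and $1/p$ to $\supess_x \sum_k \abs{f_k(x)}$, exactly as stated.

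I do not expect a genuine obstacle here: the entire content is the choice of the Hölder splitting $\abs{f_k(x)} = \abs{f_k(x)}^{1/p}\,\abs{f_k(x)}^{1/p'}$, which is what makes one factor reproduce the $L^1$-in-$x$ (resp. $\ell^1$-in-$k$) mass while the other reproduces the pointwise row/column sum. The only points requiring care are the endpoints, where Hölder degenerates. At $p=1$ one has $1/p'=0$ by the stated convention, and (a) reduces to the triangle inequality $\norm{\sum_k c_k f_k}_{L^1} \leq \sup_k\norm{f_k}_{L^1}\,\norm{c}_{\ell^1}$ while (b) reduces to a bare interchange of sum and integral (Tonelli). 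At $p=\infty$ one has $1/p=0$, and the estimates follow at once from $\abs{\sum_k c_k f_k(x)} \leq \norm{c}_{\ell^\infty}\sum_k\abs{f_k(x)}$ in (a) and from $\abs{(g\cdot F)_k}\leq\norm{g}_{L^\infty}\norm{f_k}_{L^1}$ in (b). I would dispatch these two endpoints directly and observe that the Hölder computation above covers the range $1<p<\infty$, which completes the proof.
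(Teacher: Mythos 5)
Your proof is correct, but it cannot be compared against a proof ``in the paper'' because the paper does not prove this lemma at all: it is quoted for completeness, with the proof deferred to the cited reference (\cite[Theorem 1.3.4]{gr04-3}), and the paper frames it as an \emph{interpolation} lemma --- i.e.\ the result one would obtain by interpolating between the trivial endpoints $p=1$ and $p=\infty$. What you supply instead is the standard self-contained Schur-test argument: the splitting $\abs{f_k(x)} = \abs{f_k(x)}^{1/p}\abs{f_k(x)}^{1/p'}$, a single application of H\"older (over $k$ in part (a), over $x$ in part (b)), Tonelli to justify the interchange of sum and integral, and a separate check of the degenerate endpoints. Each step is sound, the exponents land on the correct factors in both parts (note the transposition of $1/p$ and $1/p'$ between (a) and (b), which you get right), and the reduction to nonnegative data disposes of all convergence issues. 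The two routes buy different things: citing the reference keeps the paper short and emphasizes that the lemma is an off-the-shelf interpolation fact, while your direct argument makes the dependence on the two Schur quantities transparent and keeps the exposition self-contained --- it is also, in substance, the proof the cited theorem itself rests on, so nothing is lost in rigor.
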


We now give estimates for transformations operating on families of
well-localized atoms. For a matrix of complex numbers $C \equiv
(c_{k,j})_{k,j \in \Lambda}$, we consider the following weighed Schur-type
norm,
\[
\norm{C}_\schurw :=
\max \sett{\sup_k \sum_j \abs{c_{k,j}} w(k-j), 
\sup_j \sum_k \abs{c_{k,j}} w(k-j)}.
\]
Furthermore, we denote by $\schurw$ the set of all such matrices having
finite norm.

Let us show that these matrices act boundedly on well-concentrated families
of atoms.
\begin{prop}
\label{mult_matrix}
Let a family $F \equiv \sett{f_k}_{k \in \Lambda} \subseteq \Bsploc$
such that $\norm{F}_{W(\Bsp, L^1_w)} < +\infty$
and a matrix $C \in \schurw$ be given.

Let $C \cdot F \equiv \sett{g_k}_k$ be the family defined by,
\[
g_k := \sum_j c_{k,j} f_j.
\]
Then, each of the series defining $g_k$ converges in $W(\Bsp, L^1_w)$
and we have the following estimate,
\[
\norm{C \cdot F}_{W(\Bsp, L^1_w)} \leq
\norm{C}_{\schurw} \norm{F}_{W(\Bsp, L^1_w)}.
\]
\end{prop}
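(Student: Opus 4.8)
The plan is to mirror the proof of Proposition \ref{mult_synth}, reducing everything to the two quantities that define the family norm and exploiting the fact that $\norm{\cdot}_\schurw$ is precisely the maximum of the weighted row and column sums of $C$. Throughout, fix a window $\eta \in \DRd$ and write $\phi_j(x) := \norm{f_j \eta(\cdot-x)}_\Bsp w(x-j)$ for the control functions of the original family, so that $\norm{F}_{W(\Bsp, L^1_w)} = \max\sett{\sup_j \norm{\phi_j}_1, \supess_x \sum_j \phi_j(x)}$.

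I would first dispose of convergence. Fix a row $k \in \Lambda$. Since $w$ is submultiplicative and symmetric, $w(j) = w(k-(k-j)) \leq w(k)\,w(k-j)$, hence
\[
\sum_j \abs{c_{k,j}}\, w(j) \leq w(k) \sum_j \abs{c_{k,j}}\, w(k-j) \leq w(k)\, \norm{C}_\schurw < \infty,
\]
so the row $(c_{k,j})_j$ lies in $\el^1_w$. As $w$ is $w$-moderated with constant $1$, Proposition \ref{mult_synth} (with $p=1$, $v=w$) applies and shows that $\sum_j c_{k,j} f_j$ converges in $W(\Bsp, L^1_w)$; this defines $g_k$ and its control function $\psi_k(x) := \norm{g_k \eta(\cdot-x)}_\Bsp w(x-k)$.

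The heart of the argument is a pointwise domination of $\psi_k$ by the original control functions. For a \emph{finite} truncation $\sum_{j} c_{k,j} f_j$ the triangle inequality in $\Bsp$ gives $\norm{(\sum_j c_{k,j} f_j)\eta(\cdot-x)}_\Bsp \leq \sum_j \abs{c_{k,j}} \norm{f_j\eta(\cdot-x)}_\Bsp$, and combining this with the submultiplicativity bound $w(x-k) \leq w(x-j)\, w(k-j)$ yields
\[
\psi_k(x) \leq \sum_j \abs{c_{k,j}}\, w(k-j)\, \phi_j(x).
\]
Thus the nonnegative matrix $\big(\abs{c_{k,j}} w(k-j)\big)_{k,j}$ dominates the action of $C$ at the level of control functions. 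The two halves of the family norm are now each paired with one half of $\norm{C}_\schurw$: integrating in $x$ and using $\norm{\phi_j}_1 \leq \norm{F}_{W(\Bsp, L^1_w)}$ gives, uniformly in $k$,
\[
\norm{\psi_k}_1 \leq \Big(\sum_j \abs{c_{k,j}} w(k-j)\Big) \norm{F}_{W(\Bsp, L^1_w)} \leq \norm{C}_\schurw \norm{F}_{W(\Bsp, L^1_w)},
\]
using the row sums, while summing over $k$ and exchanging the order of summation gives
\[
\sum_k \psi_k(x) \leq \sum_j \Big(\sum_k \abs{c_{k,j}} w(k-j)\Big)\phi_j(x) \leq \norm{C}_\schurw \sum_j \phi_j(x),
\]
using the column sums; taking $\supess_x$ and then the maximum of the two estimates delivers the asserted bound.

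The main obstacle is not any individual inequality---each is a single application of submultiplicativity or Fubini---but two points of bookkeeping. The first is to keep the row/column pairing straight: the definition of $\norm{\cdot}_\schurw$ as the maximum of the weighted row and column $\ell^1$-sums is exactly what makes both halves of the family norm controlled by the \emph{same} constant. The second is the passage to the limit: the domination displayed above is immediate only for finitely supported rows, so I would establish the two estimates for the truncations $\sum_{\abs{j}\le N} c_{k,j} f_j$ and then let $N\to\infty$, using the $W(\Bsp, L^1_w)$-convergence already secured (which forces $L^1_w$-convergence, hence a.e.\ convergence along a subsequence, of the control functions) together with Fatou's lemma to transfer both bounds to $g_k$.
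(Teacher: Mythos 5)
Your proof is correct and follows essentially the same route as the paper's: convergence of each row series via Proposition \ref{mult_synth}, the pointwise domination $\psi_k(x) \leq \sum_j \abs{c_{k,j}} w(k-j) \phi_j(x)$, and then pairing the row sums with $\sup_k \norm{\psi_k}_1$ and the column sums with $\supess_x \sum_k \psi_k(x)$. The only difference is cosmetic: where the paper simply says ``by an approximation argument we may assume $C$ is finitely supported,'' you spell out the passage to the limit (truncations, $L^1_w$-convergence of control functions, a.e.\ subsequence, Fatou), which is a welcome filling-in of a detail the paper leaves implicit.
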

\begin{proof}
Again, by an approximation argument we may assume that $C$ is finitely supported.

First observe that for fixed $k \in \Lambda$, the sequence $\sett{c_{k,j}}_j$
belongs to $\ell^1_m$, where $m$ is the weight given by $m(j) := w(k-j)$. Since $w(j) \leq m(j) w(k)$, it follows from Proposition \ref{mult_synth} that
the series defining $g_k$ converges in $W(\Bsp, L^1_w)$.

Fix a window function $\eta \in \DRd$ and $x \in \Rd$. For each $k \in \Lambda$,
$g_k \eta(\cdot-x) = \sum_j c_{k,j} f_j \eta(\cdot-x)$. Consequently, if we set
$h_k(x):=\norm{g_k \eta(\cdot-x)}_{\Bsp} w(x-k)$, we get,
\begin{align}
\label{mult_matrix_hk}
h_k(x)
&\leq
\sum_j \abs{c_{k,j}} w(k-j) \norm{f_j \eta(\cdot-x)}_{\Bsp} w(x-j).
\end{align}
Integrating this equation yields,
\[
\norm{h_k}_1 \leq \sum_j \abs{c_{k,j}} w(k-j) \norm{F}_{W(\Bsp,L^1_w)},
\]
so $\sup_k \norm{h_k}_1 \leq \norm{C}_\schurw \norm{F}_{W(\Bsp, L^1_w)}$.
From equation \eqref{mult_matrix_hk} we also get,
\begin{align*}
\sum_k h_k(x) &\leq
\sum_j \sum_k \abs{c_{k,j}} w(k-j) \norm{f_j \eta(\cdot-x)}_{\Bsp} w(x-j)
\\
&\leq \norm{C}_\schurw \sum_j \norm{f_j \eta(\cdot-x)}_{\Bsp} w(x-j).
\end{align*}
Therefore, $\supess_x \sum_k h_k(x) \lesssim \norm{C}_\schurw \norm{F}_{W(\Bsp,L^1_w)}$. This completes the proof.
\end{proof}
We now give a dual estimate.
\begin{prop}
\label{cross_correlation}
Let two families $F \equiv \sett{f_k}_{k \in \Lambda}, 
G \equiv \sett{g_k}_{k \in \Lambda} \subseteq \Bsploc$
such that $\norm{F}_{W(\Bsp,L^1_w)}, \norm{G}_{W(\Bsp,L^1_w)} < +\infty$
be given.
Suppose that $\Bsp$ is continuously embedded in $\Linfloc$. Then the cross-correlation matrix $C$, defined by
\[
C_{k,j} := \ip{f_k}{g_j},
\]
satisfies $\norm{C}_{\schurw} \lesssim \norm{F}_{W(\Bsp,L^1_w)} \norm{G}_{W(\Bsp,L^1_w)}$.
\end{prop}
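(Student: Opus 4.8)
The plan is to reduce the weighted Schur estimate to an ordinary Schur test for the control functions of the two families. Fix a cube $Q \subset \Rd$ and set $F_k(y) := \norm{f_k}_{\Bsp(Q+y)}$ and $G_j(y) := \norm{g_j}_{\Bsp(Q+y)}$, the control functions appearing in the $\widetilde{W}$-norm of Proposition \ref{norm_generalizes}(b); by that proposition the quantities built from $F_k$ and $G_j$ are equivalent to $\norm{F}_{W(\Bsp,L^1_w)}$ and $\norm{G}_{W(\Bsp,L^1_w)}$, so it suffices to estimate everything in terms of these. The crux of the argument is the pointwise bound
\[
\abs{\ip{f_k}{g_j}} \lesssim \int_\Rd F_k(y)\, G_j(y)\, dy,
\qquad k,j \in \Lambda.
\]
Granting this, I would fix $k$ and use the symmetry and submultiplicativity of $w$ in the form $w(k-j) \leq w(y-k)\, w(y-j)$ to obtain
\[
\sum_j \abs{\ip{f_k}{g_j}}\, w(k-j)
\lesssim \int_\Rd \big(F_k(y) w(y-k)\big) \sum_j \big(G_j(y) w(y-j)\big)\, dy,
\]
the interchange of sum and integral being justified by Tonelli's theorem. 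The inner sum is precisely the integrand controlled by $\norm{G}_{W(\Bsp,L^1_w)}$ for almost every $y$, and pulling this constant out leaves $\int_\Rd F_k(y) w(y-k)\, dy \leq \norm{F}_{W(\Bsp,L^1_w)}$. Taking the supremum over $k$ bounds the first half of $\norm{C}_\schurw$; exchanging the roles of $F$ and $G$ (legitimate since $w$ is symmetric) bounds the second half, giving the claim.

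The real content lies in the displayed pointwise bound, and this is exactly where the hypothesis $\Bsp \hookrightarrow \Linfloc$ enters. I would choose $\psi \in \DRd$ with $\psi \geq 0$, $\int_\Rd \psi = 1$, and $\supp \psi \subseteq Q$, so that $\int_\Rd \psi(t-y)\, dy \equiv 1$ is a continuous partition of unity. Inserting it into $\ip{f_k}{g_j} = \int_\Rd f_k \overline{g_j}$ and applying Tonelli gives
\[
\ip{f_k}{g_j} = \int_\Rd \Big( \int_\Rd f_k(t)\, \overline{g_j(t)}\, \psi(t-y)\, dt \Big)\, dy .
\]
On $\supp \psi(\cdot-y) \subseteq Q+y$ I would estimate the two factors by their local suprema, so that the inner integral is at most $\norm{f_k}_{L^\infty(Q+y)}\, \norm{g_j}_{L^\infty(Q+y)}$. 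Finally, the embedding $\Bsp \hookrightarrow \Linfloc$ together with the isometric translation invariance of $\Bsp$ furnishes a constant $c$, independent of $y$, with $\norm{h}_{L^\infty(Q+y)} \leq c \norm{h}_\Bsp$ for all $h \in \Bsp$; taking the infimum over extensions $h \equiv f_k$ (resp. $h \equiv g_j$) on $Q+y$ yields $\norm{f_k}_{L^\infty(Q+y)} \leq c\, F_k(y)$ and $\norm{g_j}_{L^\infty(Q+y)} \leq c\, G_j(y)$, which is the claimed inequality.

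I expect the main obstacle to be the bookkeeping in this last step — matching the support cube of the partition of unity, the cube defining the control functions, and the local $L^\infty$ bounds — rather than any deep estimate. One point I would verify at the outset is that the pairings $\ip{f_k}{g_j}$ are well defined: since a submultiplicative weight satisfies $w \geq 1$, each $G_j$ is bounded (by $\norm{G}_{W(\Bsp,L^1_w)}$) while each $F_k$ lies in $L^1(\Rd)$, so $f_k \overline{g_j} \in L^1(\Rd)$ and the defining integral converges absolutely. With this and the pointwise bound secured, the remaining Schur computation is routine.
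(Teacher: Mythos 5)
Your proof is correct, and its skeleton matches the paper's: dominate $\abs{\ip{f_k}{g_j}}$ by an integral of products of local control functions of $f_k$ and $g_j$, insert $w(k-j) \leq w(y-k)\,w(y-j)$, and finish with the Schur-type summation --- this last part is identical in both arguments. Where you genuinely diverge is in how the bilinear estimate is obtained. The paper first proves a pointwise statement, $\abs{f(x)} \lesssim \norm{f\,\eta(\cdot-x)}_\Bsp$ at every Lebesgue point of $f$, deduced from the embedding $\Bsp \hookrightarrow \Linfloc$ together with the Lebesgue differentiation theorem, and then integrates the product $\abs{f_k}\,\abs{g_j}$ against this bound. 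You never establish pointwise control of the atoms themselves: instead you smear the pairing against a moving bump, using $\int_\Rd \psi(t-y)\,dy \equiv 1$ and Tonelli, and bound each localized piece by local $L^\infty$ norms, hence (via translation invariance and the embedding) by the cube-based control functions of Proposition \ref{norm_generalizes}(b). Your route is somewhat more elementary --- no differentiation theorem, no appeal to Lebesgue points --- and it settles the absolute convergence of $\ip{f_k}{g_j}$ cleanly along the way (your observation that symmetry plus submultiplicativity force $w \geq 1$ is correct). What the paper's route buys is the pointwise bound itself, a stronger intermediate fact that is reused verbatim in the proof of Proposition \ref{analysis}; with your approach that later proof would need its own (similar) smearing argument. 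The remaining difference --- working with the $\widetilde{W}$-norm of Proposition \ref{norm_generalizes}(b) instead of window control functions --- is cosmetic, since that proposition gives the equivalence of the two norms.
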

\begin{rem}
The implicit constant depends on the embedding $\Bsp
\hookrightarrow \Linfloc$.
\end{rem}
\begin{proof}
Fix $\eta \in \DRd$ supported on an open ball $B$ around 0 and such that $\eta \equiv 1$
on a smaller concentric ball $B'$.
Let $f: \Rdst \to \bC$ be a locally integrable function.
Given $x \in \Rdst$, for almost every $y \in B'+x$,
\begin{align*}
\abs{f(y)} = \abs{f(y) \eta(y-x)} \leq \norm{f \eta(\cdot-x)}_{L^\infty(B)}
\lesssim \norm{f \eta(\cdot-x)}_\Bsp.
\end{align*}
Hence
\[
\abs{B_r(x)}^{-1} \int_{B_r(x)} \abs{f(y)} dy \lesssim \norm{f \eta(\cdot-x)}_\Bsp,
\]
for all sufficiently small $r>0$.
This shows that
\begin{align*}
\abs{f(x)} \lesssim \norm{f \eta(\cdot-x)}_\Bsp,
\end{align*}
at every $x \in \Rdst$ that is a Lebesgue point of $f$.
Consequently,
\begin{align*}
\abs{c_{kj}} w(k-j) \lesssim
\int_\Rdst \norm{f_k \eta(\cdot-x)}_\Bsp w(x-k)
\norm{g_j \eta(\cdot-x)}_\Bsp w(x-j) dx.
\end{align*}
Taking $\sup_k \sum_j$ and $\sup_j \sum_k$, it follows that
$\norm{C}_\schurw \lesssim \norm{F}_{W(\Bsp, L^1_w)} \norm{G}_{W(\Bsp, L^1_w)}$.
\end{proof}
Finally we show that well localized atoms induce bounded analysis operators.
\begin{prop}
\label{analysis}
Let a family $F \equiv \sett{f_k}_{k \in \Lambda} \subseteq \Bsploc$ 
such that $\norm{F}_{W(B,L^1_w)}<+\infty$ be given.
Suppose that $\Bsp$ is continuously embedded in $\Linfloc$.
For $f \in L^p_v$ ($1 \leq p \leq \infty$) define the analysis sequence,
\[
c_k := \ip{f}{f_k},
\quad k \in \Lambda.
\]
Then $c$ is well defined, belongs to $\ell^p_v$ and satisfies
\[
\norm{c}_{\ell^p_v} \lesssim \norm{f}_{L^p_v} \norm{F}_{W(B,L^1_w)}.
\]
\end{prop}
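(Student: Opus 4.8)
The plan is to prove this as the dual counterpart of Proposition \ref{mult_synth}: where the synthesis estimate rested on part (a) of Schur's Lemma \ref{schur}, the analysis estimate will rest on part (b). The embedding $\Bsp \hookrightarrow \Linfloc$ enters exactly as in the proof of Proposition \ref{cross_correlation}, namely to dominate the pointwise values of each $f_k$ by a local $\Bsp$-norm. Throughout, the control functions $g_k(x) := \norm{f_k \eta(\cdot-x)}_\Bsp w(x-k)$ from the definition of $\norm{F}_{W(\Bsp, L^1_w)}$ play the role of the measurable family to which Schur's lemma is applied.

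First I would fix a window $\eta \in \DRd$ equal to $1$ on a ball around the origin and reuse the pointwise bound established in the proof of Proposition \ref{cross_correlation}: for a locally integrable $h$ one has $\abs{h(x)} \lesssim \norm{h \eta(\cdot-x)}_\Bsp$ at every Lebesgue point of $h$, with implicit constant governed by the embedding $\Bsp \hookrightarrow \Linfloc$. Applying this to each $f_k$ gives the pointwise estimate $\abs{c_k} \leq \int_\Rd \abs{f(x)}\, \abs{f_k(x)}\, dx \lesssim \int_\Rd \abs{f(x)} \norm{f_k \eta(\cdot-x)}_\Bsp\, dx$. I would then multiply by $v(k)$ and move the weight inside the integral using moderateness \eqref{moderated_weight}: writing $v(k) = v(x+(k-x)) \leq C v(x) w(x-k)$ (using that $w$ is symmetric), this yields $\abs{c_k} v(k) \lesssim \int_\Rd \left( \abs{f(x)} v(x) \right) g_k(x)\, dx$.

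Setting $\phi := \abs{f}\, v$, so that $\norm{\phi}_{L^p} = \norm{f}_{L^p_v}$, the right-hand side is precisely the $k$-th entry of the vector $\phi \cdot \sett{g_k}$ in the notation of Lemma \ref{schur}(b). That lemma then gives
\[
\norm{c}_{\ell^p_v} = \norm{\left(\abs{c_k} v(k)\right)_k}_{\ell^p} \lesssim \norm{\phi}_{L^p} \left(\sup_k \norm{g_k}_{L^1}\right)^{1/p'} \left(\supess_x \sum_k \abs{g_k(x)}\right)^{1/p},
\]
and since both parenthesized factors are at most $\norm{F}_{W(\Bsp, L^1_w)}$ while $1/p' + 1/p = 1$, I arrive at the claimed bound.

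The one point requiring genuine care is the assertion that $c$ is \emph{well defined}, i.e. that the defining integrals converge absolutely for each fixed $k$. For $p = \infty$ this is immediate from $\phi \in L^\infty$ and $g_k \in L^1$. For $p < \infty$ I would observe that the pointwise sum bound $\sum_j \abs{g_j(x)} \leq \norm{F}_{W(\Bsp, L^1_w)}$ forces $\norm{g_k}_{L^\infty} \leq \norm{F}_{W(\Bsp, L^1_w)}$, so interpolating against $\norm{g_k}_{L^1} \leq \norm{F}_{W(\Bsp, L^1_w)}$ puts $g_k \in L^{p'}$, and absolute convergence then follows from H\"older's inequality applied to $\int \phi\, g_k$. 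Apart from this bookkeeping, the only substantive ingredient is the pointwise domination coming from $\Bsp \hookrightarrow \Linfloc$; everything else is the symmetric mirror of the synthesis argument, which is why I expect the write-up to be short.
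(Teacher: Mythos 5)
Your proof is correct and takes essentially the same route as the paper: the pointwise domination $\abs{f_k(x)} \lesssim \norm{f_k \eta(\cdot-x)}_\Bsp$ borrowed from Proposition \ref{cross_correlation}, moderateness of $v$ to absorb the weight, and then Lemma \ref{schur}(b) applied to the control functions $g_k$. Your explicit verification that the defining integrals converge absolutely (via $g_k \in L^1 \cap L^\infty \subseteq L^{p'}$ and H\"older) is a detail the paper leaves implicit, but it does not change the argument.
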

\begin{rem}
The implicit constant depends on the embedding $\Bsp \hookrightarrow
\Linfloc$ and the constant in \eqref{moderated_weight}.
\end{rem}
\begin{proof}
Fix $\eta \in \DRd$ supported on an open ball $B$ around 0 and such that $\eta \equiv 1$
on a smaller concentric ball $B'$. As in the proof of Proposition \ref{cross_correlation}, the sequence c satisfies
\[
\abs{c_k} \lesssim \int_{\Rdst} \abs{f(x)} \norm{f_k \eta(x-k)}_\Bsp dx,
\]
so
\[
\abs{c_k}v(k) \lesssim \int_{\Rdst} \abs{f(x)}v(x) \norm{f_k \eta(x-k)}_\Bsp w(x-k) dx.
\]
The conclusion now follows from part (b) of Lemma \ref{schur}.
\end{proof}
\subsection{Sets with multiplicity}
\label{set_mult}
For technical reasons, sometimes we will need to allow the set of nodes to
have repeated elements. A \emph{set with multiplicity} is simply a map
$\Gamma \ni \gamma \mapsto \gamma^* \in \Rdst$. Any subset of $\Rdst$ can be
considered as a set with multiplicity by letting the underlying map be the
inclusion. By abuse of notation, we will sometimes refer to a set with
multiplicity by the domain of the underlying map. For sets with multiplicity,
the relative separation is defined by,
\[
\rel (\Gamma) := \max_{x\in\Rdst} \# \sett{\gamma \in \Gamma / \gamma^* \in [0,1]^d+x}.
\]
Similarly, if a family of functions
$F \equiv \sett{f_\gamma}_{\gamma \in \Gamma} \subseteq \Bsploc$ is indexed by a set with multiplicity, the weighed-amalgam norm is defined by,
\begin{align*}
\norm{F}_{W(\Bsp, L^1_w)} := \max \sett{\sup_\gamma \norm{g_\gamma}_1, \supess_x \sum_\gamma \abs{g_\gamma(x)}},
\\
\mbox{where }
g_\gamma(x) := \norm{f_\gamma \eta(\cdot-x)}_{\Bsp} w(x-\gamma^*),
\quad x \in \Rd, \gamma \in \Gamma.
\end{align*}
Here, $\eta \in \DRd$ is any nonzero window function.

Every proof about ``ordinary'' relatively separated sets given in this
article also works for relatively separated sets with multiplicity. The
reader interested in this level of generality should read $\gamma^*$ whenever
an element $\gamma$ of a set with multiplicity is used as an element of the
Euclidean space (instead of as an index set.)

\section{Spline-type spaces}
\label{sec_splines}
We now formally introduce spline-type spaces. We consider a relatively separated set of points $\Lambda \subseteq \Rd$ which will be called \emph{nodes} and a family of functions $F \equiv \sett{f_k}_{k \in \Lambda} \subseteq \Linfloc$ that will be called \emph{atoms}.

Let $\splinezz$ be the set of finite linear combination of elements of $F$.
For a weight function $v$, and $1 \leq p \leq \infty$, we denote by
$\splinepv$ the $L^p_v$-closure of $\splinezz$. If the weight v is the trivial weight 1 we drop it in the notation.

Following the spirit of \cite{fegr89}, we do not want to consider each of the spaces
$\splinepv$ individually, but to treat all the range of spaces $\splinepv$ as a whole. We think of each $\splinepv$ as variant of a single spline-type space 
$\spline = \spline(F, \Lambda)$.

It will be assumed that the family $F$ is a Banach frame for each
$\splinepv$. The general theory of localized frames \cite{gr04-1, fogr05,
bacahela06} ensures that this is indeed the case provided that $F$ is a
Hilbert space frame for $\splinet$ and that $F$ satisfies a localization
property. In our context this property amounts to spatial localization and
the adequate technical tool was developed in \cite{grle06}
(see also \cite{su07-5}.)

We now formulate precisely the assumptions that we will make on the set of
atoms $F$ and show that under those assumptions, $F$ is a Banach frame for
the whole range of spaces $\splinepv$.

\begin{itemize}
 \item We will assume that we have chosen a uniformly localizable and isometrically translation invariant Banach space $\Bsp$, that is continuously embedded into $\Linfloc$. An example to keep in mind are fractional Sobolev spaces $\Sobqs$. These spaces are embedded in $\Linfloc$ if either $q=+\infty$ or if $s>d/q$ (see \cite{ad75}.)
 \item We will also assume that $F$ satisfies the uniform concentration and smoothness condition
 $\norm{F}_{W(\Bsp, L^1_w)} < +\infty$, for some weight $w: \Rd \to (0,\infty)$ that
 verifies,
 \begin{itemize}
  \item $w(x) := \exp{\rho(\norm{x})}$, for some norm $\norm{\cdot}$ on
$\Rdst$ and some continuous, concave function $\rho: [0, \infty] \to [0,
\infty]$ such that $\rho(0)=0$ and $\lim_{x \rightarrow +\infty}
\frac{\rho(x)}{x} = 0$,
  \item $w(x) \gtrsim (1+\norm{x})^\delta$, for some $\delta>0$.
 \end{itemize}
\item Finally, we will assume that $F$ forms a frame sequence in $\LtRd$.
\end{itemize}
If all the above assumptions are met we say that $\spline = \spline(F, \Lambda)$
is a spline type space.
\begin{rem}
Under the above assumptions, the weight $w$ satisfies: $w(0)=1$, $w(x)=w(-x)$ and is submultiplicative (see \cite{grle06}.)
\end{rem}
\begin{rem}
The polynomial weights $w_\alpha$ with $\alpha > 0$ and the subexponential
weights $w(x) := e^{\alpha\abs{x}^\beta}$ with $\alpha>0$ and $0<\beta<1$
satisfy the assumptions above (see \cite{grle06}.)
\end{rem}
The first items of the next proposition are a variation of \cite[Prop. 2.3]{fogr05}, adapted to our context.
\begin{theorem}
\label{spline_norm_equiv}
Let $\spline = \spline(F, \Lambda)$ be a spline type space, then the following holds.
\begin{itemize}
 \item[(a)] $G \equiv \sett{g_k}_k$, the canonical dual family of $F$ satisfies
$\norm{G}_{W(\Bsp , L^1_w)} < +\infty$.
 \item[(b)] For any $1 \leq p \leq \infty$ and any symmetric, $w$-moderated weight $v$, we have that $F$ is a Banach frame for $\splinepv$ with associated sequence space
$\el^p_v$.
 \item[(c)] For any $1 \leq p \leq \infty$ and any symmetric, $w$-moderated weight $v$, we have the inclusion $\splinepv \subseteq W(\Bsp , L^p_v)$. Moreover, on
$\splinepv$, the $L^p_v$ and $W(\Bsp , L^p_v)$ norms are equivalent.
\end{itemize}
\end{theorem}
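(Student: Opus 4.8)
The plan is to reduce all three parts to the matrix and synthesis estimates of Section~\ref{mult}, together with a single application of spectral invariance (Wiener's lemma) for the algebra $\schurw$. The heart of the matter is part~(a). I would write $\mathcal{G}$ for the Gram matrix $\mathcal{G}_{k,j} := \ip{f_j}{f_k}$ of $F$; since $\norm{F}_{W(\Bsp,L^1_w)}<+\infty$ and $\Bsp \hookrightarrow \Linfloc$, Proposition~\ref{cross_correlation} (with both families equal to $F$) gives $\mathcal{G} \in \schurw$. Writing $T$ for the analysis operator $Tf := (\ip{f}{f_k})_k$, one has $\mathcal{G}=TT^{*}$, the frame operator is $S=T^{*}T$, and the canonical dual atoms are $g_k = S^{-1}f_k = T^{*}\mathcal{G}^{\dagger}\delta^{k} = \sum_j [\mathcal{G}^{\dagger}]_{j,k} f_j$, where $\mathcal{G}^{\dagger}$ is the Moore--Penrose pseudo-inverse. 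The operator $\mathcal{G}$ is self-adjoint and positive on $\ell^2(\Lambda)$, and because $F$ is a frame sequence in $\LtRd$ its spectrum is contained in $\sett{0}\cup[A,B]$ with $A>0$ the lower frame bound; thus $0$ is isolated and $\mathcal{G}^{\dagger}=h(\mathcal{G})$ for some $h$ holomorphic near $\sett{0}\cup[A,B]$ (with $h\equiv 0$ near $0$ and $h(t)=1/t$ near $[A,B]$). The defining properties of $w$ make it an admissible weight satisfying the GRS condition, so $\schurw$ is inverse-closed in the algebra of bounded operators on $\ell^2$; feeding the resolvents $(z-\mathcal{G})^{-1}\in\schurw$ into the Cauchy integral for $h(\mathcal{G})$ then yields $\mathcal{G}^{\dagger}\in\schurw$. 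Finally Proposition~\ref{mult_matrix} (using that $\norm{\cdot}_{\schurw}$ is invariant under transposition when $w$ is symmetric) gives $\norm{G}_{W(\Bsp,L^1_w)} \leq \norm{\mathcal{G}^{\dagger}}_{\schurw}\,\norm{F}_{W(\Bsp,L^1_w)} < +\infty$.

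I expect the genuine obstacle to be exactly this last chain: establishing that $\schurw$ is inverse-closed and, crucially, that inverse-closedness survives the passage to the \emph{pseudo}-inverse of the (generally non-invertible, since $F$ may be overcomplete) Gram matrix. This is where the constructive Wiener lemma of \cite{su07-5} (see also \cite{grle06,fogr05}) is needed, and it is the ingredient that pins down how the quality of $F$ controls the quality of its dual.

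For part~(b) I fix $1\leq p\leq\infty$ and a symmetric, $w$-moderated weight $v$. The reconstruction operator $Rc := \sum_k c_k f_k$ is bounded from $\el^p_v$ into $W(\Bsp,L^p_v)\hookrightarrow L^p_v$ by Proposition~\ref{mult_synth}; it maps into $\splinepv$ because finitely-supported sequences have images in $\splinezz$ and are dense in $\el^p_v$. By part~(a), $\norm{G}_{W(\Bsp,L^1_w)}<+\infty$, so Proposition~\ref{analysis} shows the coefficient operator $Cf := (\ip{f}{g_k})_k$ is bounded from $L^p_v$ into $\ell^p_v$; it lands in $\el^p_v$ (the only point to check is $p=\infty$, where the localization of $G$ forces the coefficients of an element of $\splinezz$ into $c^0_v$, and the general case follows by density). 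The reconstruction identity $RC=I$ holds on $\splinet\supseteq\splinezz$ by the Hilbert-space frame expansion $f=\sum_k\ip{f}{g_k}f_k$, and since $R$ and $C$ are bounded and $\splinezz$ is dense in $\splinepv$, it extends to all of $\splinepv$. Hence $R$ is a bounded retraction with $R\delta^k=f_k$, i.e.\ $F$ is a Banach frame for $\splinepv$ with sequence space $\el^p_v$.

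Part~(c) is then a corollary. For $f\in\splinepv$ we have $f=RCf$ by~(b), so combining the analysis bound $\norm{Cf}_{\el^p_v}\lesssim\norm{f}_{L^p_v}$ with the synthesis bound of Proposition~\ref{mult_synth} gives $\norm{f}_{W(\Bsp,L^p_v)}\lesssim\norm{Cf}_{\el^p_v}\norm{F}_{W(\Bsp,L^1_w)}\lesssim\norm{f}_{L^p_v}$; this proves both the inclusion $\splinepv\subseteq W(\Bsp,L^p_v)$ and one of the two norm estimates. The reverse estimate $\norm{f}_{L^p_v}\lesssim\norm{f}_{W(\Bsp,L^p_v)}$ holds for every $f$ and uses only $\Bsp\hookrightarrow\Linfloc$: exactly as in the proof of Proposition~\ref{cross_correlation}, $\abs{f(x)}\lesssim\norm{f\eta(\cdot-x)}_{\Bsp}$ for a.e.\ $x$, and taking the $L^p_v$ norm in $x$ bounds $\norm{f}_{L^p_v}$ by $\norm{f}_{W(\Bsp,L^p_v)}$. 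The two norms are therefore equivalent on $\splinepv$.
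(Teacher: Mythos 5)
Your proposal is correct and takes essentially the same route as the paper: part (a) is proved there by putting the Gram matrix in $\schurw$ via Proposition \ref{cross_correlation}, invoking \cite[Theorem 3.4]{fogr05} for membership of the pseudo-inverse in $\schurw$ (the proof of that cited theorem is precisely the spectral-gap/holomorphic-functional-calculus/Wiener-lemma argument you sketch), and concluding with Proposition \ref{mult_matrix}; parts (b) and (c) are likewise obtained from Propositions \ref{mult_synth} and \ref{analysis}, the identity $RC=I$ on $\splinezz$ plus density, and the embedding $\Bsp \hookrightarrow \Linfloc$. Your extra remark on why the coefficient operator lands in $c^0_v$ when $p=\infty$ fills in a detail the paper passes over silently.
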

\begin{rem}
In this situation, for $\ell^\infty$, the frame expansion arising from the
pair $(F,G)$ can be extended to the weak* closure of $\splinezz$ within
$L^\infty_v$, but the series converge only in the weak* topology. For details
see \cite{fegr89} and \cite{fogr05}.
\end{rem}
\begin{rem}
The norm equivalence of item (c) holds uniformly for $1 \leq p \leq +\infty$ and any class of $w$-moderated weights for which the constant in Equation \eqref{moderated_weight} is bounded.
\end{rem}
\begin{proof}
Consider the self-correlation matrix $C$, given by
$c_{kj} := \ip{f_k}{f_j}$. By proposition \ref{cross_correlation} we
have that $\norm{C}_\schurw < +\infty$. Since $F$ is a frame sequence in $\LtRd$, the matrix $C$ has a pseudo-inverse $D \in B(\ell^2)$. It follows from \cite[Theorem 3.4]{fogr05} that $\norm{D}_\schurw < +\infty$. Since $G = D \cdot F$, it follows from Proposition \ref{mult_matrix} that $\norm{G}_{W(\Bsp , L^1_w)} < +\infty$. This proves (a).

(b) is a slight variation of Proposition 2.3 in \cite{fogr05}. We only sketch the argument. Let $v$ be a symmetric, $w$-moderated weight and let $1 \leq p \leq \infty$. The reconstruction operator $R: \el^p_v \to \splinepv$, $c \mapsto c \cdot F$,
is well defined and bounded by Proposition \ref{mult_synth}. Moreover $\norm{R} \lesssim \norm{F}_{W(\Bsp , L^1_w)}$. Proposition \ref{analysis} implies that the coefficients mapping $C: L^p_v \to \ell^p_v$, given by $f \mapsto \sett{\ip{f}{g_k}}_k$ is well defined and satisfies $\norm{C} \lesssim \norm{G}_{W(\Bsp , L^1_w)}$. Moreover, if $f$ is a finite linear combination of functions of $F$, we have that $R C (f) = f$. It follows that $(F,G)$ determines a
Banach frame pair.

Now (c) follows easily from Proposition \ref{mult_synth}.
Since $\Bsp \hookrightarrow L^{\infty, \textit{loc}}
\hookrightarrow L^{p, \textit{loc}}$, we have the inclusion
$W(\Bsp, L^p_v) \hookrightarrow W(L^p, L^p_v)$.
Therefore, for $f \in \splinepv$, $f= RC(f)$ and,
\begin{align*}
\norm{f}_{L^p_v} &\lesssim \norm{f}_{W(\Bsp ,L^p_v)}
\lesssim \norm{F}_{W(\Bsp , L^1_w)} \norm{C(f)}_{\ell^p_v}
\\
&\lesssim \norm{F}_{W(\Bsp , L^1_w)} \norm{G}_{W(\Bsp , L^1_w)} \norm{f}_{L^p_v}.
\end{align*}
\end{proof}
We now observe that, in order to bound an operator on a spline-type space, we
just need to control its behavior on the atoms.
\begin{prop}
\label{bounds_operator_spline}
Let $\spline = \spline(F, \Lambda)$ be a spline-type space. Let
$v$ be a symmetric, $w$-moderated weight, $1 \leq p \leq \infty$ and let
$T: \splinepv \to L^p_v$ be a linear operator. Then,
\[
\norm{T}_{\splinepv \to L^p_v} \lesssim \norm{T(F)}_{W(\Bsp , L^1_w)}.
\]
\end{prop}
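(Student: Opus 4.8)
The plan is to bound $T$ by running the frame expansion backwards through the synthesis estimate of Proposition \ref{mult_synth}. Let $G \equiv \sett{g_k}_k$ be the canonical dual family, which satisfies $\norm{G}_{W(\Bsp,L^1_w)} < +\infty$ by Theorem \ref{spline_norm_equiv}(a). Given $f \in \splinepv$, Theorem \ref{spline_norm_equiv}(b) expands it as $f = \sum_k c_k f_k$ with $c_k := \ip{f}{g_k}$ and $c \in \el^p_v$. Applying $T$ and using linearity, I would write $Tf = \sum_k c_k (Tf_k) = c \cdot T(F)$, realizing the image as a synthesis over the transported family $T(F) \equiv \sett{Tf_k}_k$.

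With this identity in hand the two needed estimates are already available. On the coefficient side, Proposition \ref{analysis} applied to $G$ gives $\norm{c}_{\ell^p_v} \lesssim \norm{f}_{L^p_v}$, the implicit constant absorbing $\norm{G}_{W(\Bsp,L^1_w)}$. On the synthesis side, Proposition \ref{mult_synth} applied to the family $T(F)$ gives $\norm{c \cdot T(F)}_{W(\Bsp,L^p_v)} \lesssim \norm{c}_{\ell^p_v}\,\norm{T(F)}_{W(\Bsp,L^1_w)}$; composing with the embedding $W(\Bsp,L^p_v) \hookrightarrow L^p_v$ (valid because $\Bsp \hookrightarrow \Linfloc$ forces $W(\Bsp,L^p_v) \hookrightarrow W(L^p,L^p_v)$, exactly as in the proof of Theorem \ref{spline_norm_equiv}(c)) controls $\norm{Tf}_{L^p_v}$. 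Concatenating yields $\norm{Tf}_{L^p_v} \lesssim \norm{f}_{L^p_v}\,\norm{T(F)}_{W(\Bsp,L^1_w)}$, which is the claim; if $\norm{T(F)}_{W(\Bsp,L^1_w)} = +\infty$ there is nothing to prove.

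The delicate point is the commutation $T\left(\sum_k c_k f_k\right) = \sum_k c_k (Tf_k)$, since $T$ is only assumed linear and its continuity is precisely what is being established. I would circumvent this by first proving the estimate on the dense core $\splinezz$ of finite linear combinations of atoms: there the expansion is a finite sum, linearity applies with no convergence issue, and the chain above gives the bound directly. This shows $T$ is bounded on $\splinezz$ with the asserted norm, and by density of $\splinezz$ in $\splinepv$ the estimate passes to the whole space. I expect this density bookkeeping to be the only real obstacle, and in particular the case $p=\infty$ to require the most care, since there $\splinezz$ is only weak*-dense and the frame series converges only in the weak* topology, as noted in the remark following Theorem \ref{spline_norm_equiv}; the quantitative heart of the argument is an immediate consequence of Propositions \ref{analysis} and \ref{mult_synth}.
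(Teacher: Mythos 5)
Your proposal is correct and takes essentially the same route as the paper's proof, which likewise observes that $T(c \cdot F) = c \cdot T(F)$ for finitely supported $c$, chains the embeddings $W(\Bsp, L^p_v) \hookrightarrow W(L^\infty, L^p_v) \hookrightarrow L^p_v$ with the synthesis bound of Proposition \ref{mult_synth} and the coefficient bound $\norm{c}_{\ell^p_v} \lesssim \norm{f}_{L^p_v}$ coming from Theorem \ref{spline_norm_equiv}, and then extends from finite linear combinations to all of $\splinepv$ by an approximation argument. The commutation and density subtleties you single out (including the extra care needed at $p=\infty$) are handled in the paper exactly as you propose, by first proving the estimate on $\splinezz$, so your write-up is, if anything, a more explicit version of the paper's own argument.
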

\begin{proof}
If $f=c \cdot F$ for a finitely supported sequence $c \in \ell^p(\Lambda)$,
then $T(f)=c \cdot T(F)$.
Theorem \ref{spline_norm_equiv} implies that,
\begin{align*}
\norm{T(f)}_{L^p_v} & \lesssim \norm{T(f)}_{W(L^\infty , L^p_v)}
\\
&\lesssim \norm{T(f)}_{W(\Bsp , L^p_v)}
\\
&\lesssim \norm{c}_{\ell^p_v} \norm{T(F)}_{W(\Bsp , L^1_w)}
\\
&\lesssim \norm{f}_{L^p_v} \norm{T(F)}_{W(\Bsp, L^1_w)}.
\end{align*}
The conclusion extends to general $f$ by an approximation argument.
\end{proof}

Finally we observe that, as a consequence of Theorem \ref{spline_norm_equiv}, there is a universal projector $P:L^p_v \to \splinepv$, for all $1 \leq p \leq \infty$ and $w$-moderated weights $v$. More precisely, we have the following statement.
\begin{theorem}
\label{universal_projector}
Let $\spline = \spline(F, \Lambda)$ be a spline-type space
and let $P: L^2 \to \splinet$ be the orthogonal projector onto $\splinet$.
Then, for
all $1 \leq p < \infty$ and $w$-moderated weights $v$, the restriction of $P$
to $\SchRd$ extends by density to a bounded projector $P: L^p_v \to \splinepv$. For
$p=\infty$ the same statement is true replacing $L^\infty_v$ for $C^0_v$.

Moreover, the norm of $P$ is uniformly bounded for $1 \leq p \leq +\infty$ and any class of $w$-moderated weights for which the constant in Equation \eqref{moderated_weight} is bounded.
\end{theorem}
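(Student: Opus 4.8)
The plan is to realize the sought-after extension of $P$ as the composition of the analysis operator attached to the canonical dual family $G$ with the synthesis operator attached to $F$. Write $C_G(f) := \sett{\ip{f}{g_k}}_k$ for analysis with respect to $G \equiv \sett{g_k}_k$, and $R_F(c) := c \cdot F = \sum_k c_k f_k$ for synthesis with respect to $F$, and set $\tilde P := R_F \circ C_G$. By Theorem \ref{spline_norm_equiv}(a) we have $\norm{G}_{W(\Bsp, L^1_w)} < +\infty$, so Proposition \ref{analysis} applied to the family $G$ shows that $C_G: L^p_v \to \ell^p_v$ is bounded with $\norm{C_G} \lesssim \norm{G}_{W(\Bsp, L^1_w)}$. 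Proposition \ref{mult_synth} applied to $F$ shows that $R_F: \el^p_v \to W(\Bsp, L^p_v)$ is bounded with $\norm{R_F} \lesssim \norm{F}_{W(\Bsp, L^1_w)}$; since $\Bsp \hookrightarrow \Linfloc$ gives $W(\Bsp, L^p_v) \hookrightarrow L^p_v$ and every partial sum of $R_F(c)$ lies in $\splinezz$, the range of $R_F$ is contained in $\splinepv$. Composing, $\tilde P: L^p_v \to \splinepv$ is bounded with $\norm{\tilde P} \lesssim \norm{F}_{W(\Bsp, L^1_w)} \norm{G}_{W(\Bsp, L^1_w)}$. This bound is independent of $p$ and of $v$ — the implicit constants depend only on the embedding $\Bsp \hookrightarrow \Linfloc$ and on the constant in \eqref{moderated_weight} — which yields the asserted uniform boundedness.

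Next I would verify that $\tilde P$ is an idempotent extending the orthogonal projector. Idempotency is immediate: the range of $\tilde P$ lies in $\splinepv$, and on $\splinepv$ the Banach frame property of Theorem \ref{spline_norm_equiv}(b) is exactly the statement $R_F C_G = I$, so $\tilde P|_{\splinepv} = I$ and hence $\tilde P^2 = \tilde P$. To identify $\tilde P$ with $P$, recall that for the canonical dual the vectors $g_k$ belong to $\splinet$; thus for $f \in \Ltsp$ the difference $f - Pf$ is orthogonal to each $g_k$, whence $\ip{f}{g_k} = \ip{Pf}{g_k}$ and the frame reconstruction in $\splinet$ gives $\sum_k \ip{f}{g_k} f_k = \sum_k \ip{Pf}{g_k} f_k = Pf$. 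Taking $(p,v) = (2,1)$ this says $\tilde P = P$ on $\Ltsp$, and for general $(p,v)$ it says $\tilde P$ agrees with $P$ on $\DRd \subseteq \Ltsp \cap L^p_v$. Since $\DRd$ is dense in $L^p_v$ for $1 \leq p < \infty$ (and in $C^0_v$ for $p = \infty$) and $\tilde P$ is bounded, $\tilde P$ is the unique bounded extension of $P$, as required.

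The endpoint $p = \infty$ needs the mild care already flagged after Proposition \ref{mult_synth}: for a general element of $\ell^\infty_v$ the synthesis series converges only weak*, which is why the statement is phrased with $C^0_v$ in place of $L^\infty_v$. On $C^0_v$ the analysis $C_G$ takes values in $\el^\infty_v = c^0_v$, where finitely supported sequences are dense, so $R_F$ converges in norm and the argument above applies verbatim.

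The step I expect to require the most care is guaranteeing that the extension takes values in the correct space $\splinepv$, and not merely in $L^p_v$ or in the $L^p_v$-closure of the span of the dual atoms. This is the reason for composing in the order $R_F \circ C_G$ (synthesis with $F$, analysis with $G$) rather than using the formally symmetric $R_G \circ C_F$: only the former has range manifestly inside the closure of $\splinezz$. A related subtlety is the choice of dense test class, since for a subexponentially growing $w$-moderated weight $v$ one cannot literally use all of $\SchRd$ (a generic Schwartz function need not lie in $L^p_v$); the coincidence with $P$ is therefore best checked on $\DRd$, which sits inside $\Ltsp \cap L^p_v$ and is dense. Beyond these two points the proof is bookkeeping with the multiplier estimates of Section \ref{mult} and the Banach frame identity of Theorem \ref{spline_norm_equiv}.
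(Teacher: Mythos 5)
Your proposal follows essentially the same route as the paper: the paper's proof also writes $P(f) = \sum_{k} \ip{f}{g_k} f_k$ using the canonical dual family from Theorem \ref{spline_norm_equiv}(a) and bounds this expression via Propositions \ref{mult_synth} and \ref{analysis}, arriving at the same uniform estimate $\norm{P(f)}_{L^p_v} \lesssim \norm{F}_{W(\Bsp, L^1_w)} \norm{G}_{W(\Bsp, L^1_w)} \norm{f}_{L^p_v}$. Your additional verifications (idempotency, the identification of $R_F \circ C_G$ with the orthogonal projector on $\Ltsp$, the range lying in $\splinepv$, and the observation that $\DRd$ is the safer dense test class than $\SchRd$ when the $w$-moderated weight $v$ grows subexponentially) are details the paper leaves implicit rather than a genuinely different argument.
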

\begin{proof}
We only need to check that the restriction of $P$ to $\SchRd$ is bounded in
the norm of $L^p_v$. The projector $P$ is given by
\[
P(f) = \sum_{k \in \Lambda} \ip{f}{g_k} f_k,
\]
where $G$ is the family of dual atoms given by Theorem \ref{spline_norm_equiv} (a). Using Propositions \ref{mult_synth} and \ref{analysis} we get,
\begin{align*}
\norm{P(f)}_{L^p_v} &\lesssim \norm{P(f)}_{W(\Bsp, L^p_v)}
\lesssim \norm{F}_{W(\Bsp, L^1_w)} \norm{G}_{W(\Bsp, L^1_w)} \norm{f}_{L^p_v}.
\end{align*}
\end{proof}
\section{Frame surgery}
\label{sec_surgery}
In this section we consider the following locality problem. We are given a
spline type space $\spline$ and several exterior frames
$\sett{\varphi^i_k}_{k \in \Lambda_i}$, $i \in I$,
for it. For each of these frames, we arbitrarily select a region of the euclidean space $\parti$ where we want to use it. The family $\sett{\parti}_i$ must form a covering of $\Rdst$. We argue that, if for each $i \in I$ we pick from the frame 
$\sett{\varphi^i_k}_{k \in \Lambda_i}$ those elements that are concentrated
near $\parti$, then the resulting family
$\sett{\varphi^i_k}_{k \in \Delta_i, i \in I}$
forms an exterior frame for $\spline$. Moreover, given (possibly
non-canonical) dual frames for each of the original exterior frames, we
provide an approximate reconstruction operator for the new exterior frame.

Since we are not dealing with frames for the whole space $L^2(\Rdst)$, we
cannot take a function $f$, break it into pieces $f_i$ supported on $\parti$,
expand each $f_i$ using the exterior frame $\sett{\varphi^i_k}_k$, and then
add all those expansions. This approach does not work in our context because
for $f \in \spline$, the localized pieces $f_i$ do not belong to $\spline$
and consequently cannot be expanded using the frame $\sett{\varphi^i_k}_k$.

Instead, we argue that for each member of the covering $\parti$, the norm of
a function $f \in \spline$ restricted to $\parti$ should depend mainly on the
atoms concentrated around $\parti$. Then we glue these local estimates by
means of an almost-orthogonality principle, which is implicit in the
computations below.

To be able to quantify the approximation scheme we will work with
frames polynomially localized in space.

\subsection{The approximation scheme}
\label{gluying}
We now give the precise assumptions for this section.

\begin{itemize}
\item We assume that $\spline = \spline(F, \Lambda)$ is a spline-type space
where the atoms $F \equiv \sett{f_k}_{k \in  \Lambda}$ and a given system of
dual atoms 
$G \equiv \sett{g_k}_{k \in  \Lambda}$ satisfy,
\begin{equation}
\label{frame_loc}
\abs{f_k(x)}, \abs{g_k(x)} \leq C \left(1+\abs{x-k}\right)^{-(s+\alpha)}
\qquad (x \in \Rdst, k\in\Lambda),
\end{equation}
for some constants $C>0$, $s>d$ and $\alpha \geq 0$. It is well-known
\cite{gr04-1} that if this condition holds for the atoms $F$, then it is
automatically satisfied by some system of dual atoms $G$.

\item We are given a family of frame pairs for $\splinet$.
\footnote{Remember that the analyzing atoms $\sett{\varphi^i_k}_k$ need not
to belong to
$\splinet$.}
,
\[
\left( \sett{\psi^i_k}_{k \in \Lambda_i}, \sett{\varphi^i_k}_{k \in \Lambda_i} \right)
\qquad (i \in I),
\]
that satisfy the following uniform concentration condition around their nodes $\Lambda_i$,
\begin{equation}
\label{frames_loc_2}
\abs{\varphi^i_k(x)}, \abs{\psi^i_k(x)} \leq C \left(1+\abs{x-k}\right)^{-(s+\alpha)}
\qquad (x \in \Rdst, k\in\Lambda_i, i \in I),
\end{equation}
for some constant $C>0$, that, for simplicity, is assumed to be equal to the
constant in \eqref{frame_loc}.

Observe that we are requiring all the frames \emph{and the dual frames} to
be uniformly localized. Given a concrete family of uniformly localized
(exterior) frames, it can be difficult to decide if they possess a
corresponding family of dual frames sharing a common spatial localization.
This problem is addressed in Section \ref{sec_quality}.

\item We have a (measurable) covering of $\Rdst$, $\partt \equiv
\sett{\parti}_{i \in I}$ that is \emph{uniformly locally finite}. This means
that for some (or any) cube Q,
\begin{equation}
\label{loc_finite_partition}
\partnq :=
\max_{x\in\Rdst} \# \sett{i \in I / (Q+x) \cap \parti \not= \emptyset} < \infty.
\end{equation}
Observe that this assumption in particular implies that number of overlaps of $\partt$ is finite. That is,
\begin{equation}
\label{bounded_overlaps}
\partn :=
\max_{x\in\Rdst} \# \sett{i \in I / x \in \parti} < \infty.
\end{equation}

\item We suppose that the set of nodes $\Lambda_i$ are \emph{uniformly relatively separated}. That is,
\begin{equation}
\label{unif_rel_sep}
\sup_{i \in I} \rel (\Lambda_i) < \infty.
\end{equation}
Observe that this assumption, together with the uniform localization of the dual frames, implies that the original frames have a uniform common lower bound.
\end{itemize}
We now prove the central approximation result. In Section \ref{new_frames} we apply this result to the construction of new frames.
\begin{theorem}
\label{glue}
Let $\sett{\partfi}_{i \in I}$ be a (measurable) partition of the unity
subordinated to $\partt$. That is, each $\partfi$ is nonnegative, $\sum_i
\partfi \equiv 1$ and
$\supp(\partfi) \subseteq \parti$.

For each $r>0$ consider the sets
\begin{equation*}
\Lambda_i^r := \sett{k \in \Lambda_i / d(k,\parti) \leq r}
\end{equation*}
and choose any set $\Delta_i^r$ such that $\Lambda_i^r \subseteq \Delta_i^r \subseteq \Lambda_i$.
Consider also, the following \emph{approximate reconstruction operator},
\[
A^r(f) := \sum_{i \in I} \sum_{k \in \Delta_i^r} \ip{f}{\varphi^i_k} \psi^i_k \partfi.
\]

Then, for any $1 \leq p \leq \infty$, any $w_\alpha$-moderated weight $v$
and any $f \in \splinepv$,
\[
\norm{A^r(f)-f}_{L^p_v} \leq K \partn \norm{f}_{L^p_v} r^{-(s-d)},
\]
where $K>0$ is a constant that only depends on $d, C, s, \alpha$, the set of nodes $\Lambda$, the common relative separation of all the sets of nodes $\Lambda_i$ and the constant in Equation \eqref{moderated_weight} using the weight $w_\alpha$ as moderator.
\end{theorem}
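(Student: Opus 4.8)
The plan is to first rewrite the error $f-A^r(f)$ in a form that exposes the role of the far-away nodes. Since each pair $(\sett{\psi^i_k}_k,\sett{\varphi^i_k}_k)$ is a reconstructing pair for $\splinet$ and every atom $f_m$ lies in $\splinezz\subseteq\splinet$, I would use the reconstruction $f_m=\sum_{k\in\Lambda_i}\ip{f_m}{\varphi^i_k}\psi^i_k$, valid for each $i$, multiply by $\partfi$ and sum over $i$ using $\sum_i\partfi\equiv1$, obtaining $f_m=\sum_i\sum_{k\in\Lambda_i}\ip{f_m}{\varphi^i_k}\psi^i_k\partfi$. Subtracting $A^r(f_m)$ leaves $(I-A^r)(f_m)=\sum_i\sum_{k\in\Lambda_i\setminus\Delta_i^r}\ip{f_m}{\varphi^i_k}\psi^i_k\partfi$, where, because $\Lambda_i^r\subseteq\Delta_i^r$, every surviving node obeys $d(k,\parti)>r$; in particular $|y-k|>r$ whenever $\partfi(y)\neq0$. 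This distance gap is the source of the decay $r^{-(s-d)}$.

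Next I would reduce the operator estimate to a statement about the atoms. Working throughout with the polynomial weight $w_\alpha$ and local component $\Bsp=L^\infty$ (legitimate since \eqref{frame_loc}--\eqref{frames_loc_2} are pointwise bounds and the multiplier results of Section~\ref{mult} hold for this choice), I would apply Proposition \ref{bounds_operator_spline} to the linear map $T:=I-A^r:\splinepv\to L^p_v$ to get $\norm{I-A^r}_{\splinepv\to L^p_v}\lesssim\norm{(I-A^r)(F)}_{W(L^\infty,L^1_{w_\alpha})}$; the only ingredient of that proposition's proof specific to the spline space is the analysis bound $\norm{c}_{\ell^p_v}\lesssim\norm{f}_{L^p_v}$, which is furnished by Theorem \ref{spline_norm_equiv}, so the reduction survives replacing the spline's local space by $L^\infty$. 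It therefore suffices to bound the family $\sett{h_m}_{m\in\Lambda}$, $h_m:=(I-A^r)(f_m)$, in $W(L^\infty,L^1_{w_\alpha})$, that is, to control $\sup_m\norm{G_m}_1$ and $\supess_y\sum_m G_m(y)$, where (using Proposition \ref{norm_generalizes}(b)) $G_m(y)=\norm{h_m}_{L^\infty(Q+y)}\,w_\alpha(y-m)$ for a fixed cube $Q$.

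For the pointwise input I would combine three estimates: $|\ip{f_m}{\varphi^i_k}|\lesssim w_{-(s+\alpha)}(m-k)$ from the cross-correlation of two order-$(s+\alpha)$ localized atoms (Lemma \ref{conv_bound}), the bound $|\psi^i_k(y)|\le C\,w_{-(s+\alpha)}(y-k)$, and the submultiplicativity $w_\alpha(y-m)\le w_\alpha(y-k)\,w_\alpha(k-m)$, which absorbs the two excess weights and reduces both localization exponents from $s+\alpha$ to $s$. This yields $|h_m(y)|\,w_\alpha(y-m)\lesssim\sum_i\partfi(y)\sum_{k\in\Lambda_i,\,|y-k|>r}w_{-s}(m-k)\,w_{-s}(y-k)$. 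For the column quantity $\supess_y\sum_m$ I would sum in $m$ first ($\sum_m w_{-s}(m-k)\lesssim\rel(\Lambda)$, Lemma \ref{conv_nodes}(a)), then over the far nodes $k$ by the discrete tail bound $\sum_{k\in\Lambda_i,\,|y-k|>r}w_{-s}(y-k)\lesssim\rel(\Lambda_i)\,r^{-(s-d)}$ (Lemma \ref{conv_nodes}(b)), and finally over $i$ against $\partfi$ using $\sum_i\partfi\equiv1$ together with the fact that at most $\partn$ pieces meet any point. For the row quantity $\sup_m\norm{G_m}_1$ the gain $r^{-(s-d)}$ instead comes from the continuous tail $\int_{|z|>r}w_{-s}(z)\,dz\lesssim r^{-(s-d)}$ after integrating in $y$.

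The step I expect to be most delicate is the interplay between the summation over $i$ and the extraction of the rate $r^{-(s-d)}$. The double index set $\sett{(i,k):k\in\Lambda_i}$ is not relatively separated --- each $\Lambda_i$ is globally dense --- so one cannot sum freely over $i$; the partition of unity must be kept intact (using $\sum_i\partfi\equiv1$, equivalently the finite overlap $\partn$) rather than crudely bounding $\partfi\le\chi_{\parti}$, which would reintroduce a divergent sum. At the same time the rate must be taken from the tail of $w_{-s}$ beyond radius $r$, and not by pulling a pointwise factor $w_{-(s-d)}(y-k)\le r^{-(s-d)}$ out of the localization, since the latter leaves the non-summable weight $w_{-d}$ and destroys the estimate. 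Concretely, for the row quantity I would split the $y$-integral according to whether $|y-m|\gtrless r/2$: on $\sett{|y-m|>r/2}$ the full node-convolution bound of Lemma \ref{conv_nodes}(c) combined with the tail integral of $w_{-s}$ gives the rate, while on $\sett{|y-m|\le r/2}$ every surviving node also satisfies $|m-k|>r/2$, so both localization factors are simultaneously small and the discrete tail bound (applied to $w_{-2s}$) yields, after integration over a ball of radius $r/2$, a decay at least as strong as $r^{-(s-d)}$ for $r\ge1$. Collecting the geometric constants, the relative separations $\rel(\Lambda)$ and $\sup_i\rel(\Lambda_i)$, the overlap $\partn$, and the constant from \eqref{moderated_weight} into a single $K$ then completes the estimate for $r\ge1$; for $r\le1$ the asserted bound is weaker than the trivial operator bound and holds after enlarging $K$.
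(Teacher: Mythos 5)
Your proposal is correct, and its skeleton is the same as the paper's: expand the error on the atoms via the double frame expansion, reduce the operator bound to two Schur-type quantities through the coefficient estimate $\norm{c}_{\ell^p_v}\lesssim\norm{f}_{L^p_v}$, and extract the rate from tails of $w_{-s}$. Where you genuinely diverge is at the step the paper itself treats as the delicate one: summing over the family index $i$ in the row quantity without the divergence you correctly flag. The paper's device is to enlarge $\Lambda$ to a relatively dense set (adjoining zero atoms), build round-up maps $\mu_i:\Lambda_i\to\Lambda$ of multiplicity at most $N$ so that all far-node sums run over the single relatively separated set $\Lambda$, and then invoke bounded overlap geometrically: the union of the pieces $\parti$ with $d(j,\parti)>R$ lies outside a cube of side $R$ around $j$, so the sum of integrals over them is at most $\partn$ times one tail integral. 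Your device --- splitting the $y$-integral at $\abs{y-m}=r/2$, bounding the inner sums uniformly in $i$ (Lemma \ref{conv_nodes}(c) plus the continuous tail on the far region; on the near region the constraint forces $\abs{m-k}>r/2$ \emph{and} $\abs{y-k}>r$, so the $w_{-2s}$ tail bound times the volume $r^d$ gives $r^{-2(s-d)}\le r^{-(s-d)}$), and only then collapsing $\sum_i\partfi\equiv1$ --- is a valid substitute. It is arguably more elementary: it needs neither the L-density extension nor the round-up construction, and because $\sum_i\partfi\equiv1$ does exact bookkeeping where $\chi_{\parti}$ overcounts, your (pointwise) bound carries no factor of $\partn$ at all, which is stronger than the stated estimate. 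What the paper's route buys in exchange is a single discrete error kernel $E^r_k$ over a common index set, the same sets-with-multiplicity formalism reused in Theorem \ref{gluying_frames}.

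One caveat. By routing the reduction through Proposition \ref{bounds_operator_spline} and the family norm $W(L^\infty,L^1_{w_\alpha})$, you must control local suprema $\norm{h_m}_{L^\infty(Q+y)}$ rather than pointwise values, and there the clean collapse degrades: bounding $\sup_{y'\in Q+y}\partfi(y')$ forces you to count the pieces meeting $Q+y$, i.e. $\partnq$ rather than $\partn$, so as written your constant depends on the local-finiteness constant --- which the theorem (see the remark following it) explicitly avoids. This is cosmetic rather than a gap: all of your estimates are already pointwise, so apply Lemma \ref{schur}(a) directly to the dominating kernel $\sum_i\partfi(\cdot)\sum_{k}w_{-s}(m-k)w_{-s}(\cdot-k)$, exactly as the paper does, and the unwanted dependence disappears.
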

\begin{rem}
The fact that the covering is uniformly locally finite is not used in the proof of the theorem; only the weaker condition in Equation \eqref{bounded_overlaps}
is needed. However, the stronger assumption of Equation \eqref{loc_finite_partition} is required for the applications of Section \ref{sec_applications}.
\end{rem}
\begin{proof}
Observe first that we can always add more nodes to the set $\Lambda$ and
extend the set of atoms $F$ and dual atoms $G$ by associating 0 to the new
nodes. All the assumptions on the atoms are preserved by this extension, but
the relative separation of the set of nodes changes. By adding to $\Lambda$
any fixed relatively separated and relatively dense set $\Gamma$, we can
assume that $\Lambda$ is L-dense (c.f. Equation \eqref{ldense}.) The relative
separation of the resulting set can be bounded by
$\rel(\Lambda)+\rel(\Gamma)$.

For all $i \in I$, every $f \in \splinezz$ admits the expansion,
\[
f = \sum_{j \in \Lambda_i} \ip{f}{\varphi^i_j} \psi^i_j.
\]
Averaging all these expansions yields,
\[
f = \sum_{i \in I} \sum_{j \in \Lambda_i} \ip{f}{\varphi^i_j} \psi^i_j \partfi.
\]
Since $f$ also admits the expansion $f=\sum_k \ip{f}{g_k} f_k$, it follows that
\[
f = \sum_{k \in \Lambda} \ip{f}{g_k}
\sum_{i \in I} \sum_{j \in \Lambda_i} \ip{f_k}{\varphi^i_j} \psi^i_j \partfi.
\]

Similarly,
\begin{align*}
A^r(f) &= \sum_{i \in I} \sum_{j \in \Delta^r_i} \ip{f}{\varphi^i_j} \psi^i_j \partfi
\\
&= \sum_{k \in \Lambda} \ip{f}{g_k} \sum_{i \in I} \sum_{j \in \Delta^r_i} \ip{f_k}{\varphi^i_j} \psi^i_j \partfi
\end{align*}
Therefore, for $f \in \splinezz$,
\begin{align*}
f-A^r(f) &= \sum_{k \in \Lambda} \ip{f}{g_k} \sum_{i \in I} \sum_{j \notin \Delta^r_i} \ip{f_k}{\varphi^i_j} \psi^i_j \partfi
\end{align*}
Consequently, if we set $c_k := \ip{f}{g_k}$, by Lemma \ref{conv_bound},
\begin{align*}
\abs{f-A^r(f)}
&\lesssim \sum_{k \in \Lambda} \abs{c_k} \sum_{i \in I}
\sum_{j \notin \Delta_i} \ip{w_{-(s+\alpha)}(\cdot-k)}{w_{-(s+\alpha)}(\cdot-j)} w_{-(s+\alpha)}(\cdot-j) \chi_{\parti}
\\
&\lesssim \sum_{k \in \Lambda} \abs{c_k} \sum_{i \in I}
\sum_{j \notin \Lambda^r_i} w_{-(s+\alpha)}(k-j)
 w_{-(s+\alpha)}(\cdot-j) \chi_{\parti}.
\end{align*}
If we define,
\[
E^r_k := \sum_{i \in I}
\sum_{j \notin \Lambda^r_i} w_{-(s+\alpha)}(k-j)
 w_{-(s+\alpha)}(\cdot-j) \chi_{\parti},
\]
Lemma \ref{schur} implies that, 
\begin{equation*}
\norm{f-A^r(f)}_{L^p_v} \lesssim \norm{c}_{\ell^p_v} \left( \sup_k \norm{E^r_k w_\alpha(\cdot-k)}_1 \right)^{1/p}
\left( \supess_{x \in \RR^d} \sum_k \abs{E^r_k(x) w_\alpha(x-k)} \right)^{1/p'}.
\end{equation*}
Since $(F,G)$ is a frame pair for $\splinepv$, $\norm{c}_{\ell^p_v} \leq K
\norm{f}_{L^p_v}$, for some constant $K$ that only depends on d, C, s,
$\alpha$,
the constant in Equation \eqref{moderated_weight} (taking $w=w_\alpha$) and $\Lambda$. Consequently,
\begin{equation}
\label{error_bound}
\norm{f-A^r(f)}_{L^p_v} \leq K \norm{f}_{L^p_v} \left( \sup_k \norm{E^r_k w_\alpha(\cdot-k)}_1 \right)^{1/p}
\left( \supess_{x \in \RR^d} \sum_k \abs{E^r_k(x) w_\alpha(x-k)} \right)^{1/p'}.
\end{equation}

Now observe that, since $w_\alpha(x-k) \leq w_\alpha(x-j) w_\alpha(k-j)$,
\begin{align*}
\abs{E^r_k(x) w_\alpha(x-k)} &\leq
\sum_{i \in I} \sum_{j \notin \Lambda^r_i} w_{-s}(k-j)
 w_{-s}(x-j) \chi_{\parti}(x).
\end{align*}
For every $i \in I$, since $\Lambda$ is now assumed to be $L$-dense, there
exists a map $\mu_i: \Lambda_i \to \Lambda$ such that $\abs{k-\mu_i(k)} \leq
L$, for all $k \in \Lambda_i$. This map will be used to reduce the proof to
the case where all the index sets are equal. This same argument was used in
\cite{bacahela06}, where irregularly distributed phase-space points are
assigned a near point in a regular reference system by means of a `round-up'
map.

Since the sets $\Lambda_i$ are assumed to be uniformly relatively
separated, 
there exists a number $N \in \Nst$, that depends only on $L$ and the relative separation of all the sets of nodes, such that
\begin{align*}
\# \mu_i^{-1}(\sett{j}) \leq N
\mbox{, for every $j \in \Lambda$}.
\end{align*}

Suppose initially that $r > 2L$, define $R:=r-L$ and estimate,
\begin{align*}
\abs{E^r_k(x) w_\alpha(x-k)}
&\leq 
\sum_{i \in I} \sum_{j \in \Lambda}
\mathop{\sum_{l \in \mu_i^{-1}(j),}}_{l \notin \Lambda^r_i}
w_{-s}(k-l) w_{-s}(x-l) \chi_{\parti}(x).
\end{align*}
If $l \in \mu_i^{-1}(j)$, then $\abs{j-l} \leq L$, 
so $w_{-s}(k-l) \lesssim w_{-s}(k-j)$ and
$w_{-s}(x-l) \lesssim w_{-s}(x-j)$. (Here the implicit constants depend on $L$ and $s$.)
If in addition $l \notin \Lambda^r_i$, then $j \notin \Omega_i^R$,
where
\begin{equation*}
 \Omega^R_i := \set{k \in \Lambda}{d(k,E_i) \leq R}.
\end{equation*}
Consequently,
\begin{align}
\label{error_kernel_estimate}
\abs{E^r_k(x) w_\alpha(x-k)}
&\lesssim 
N \sum_{i \in I} \sum_{j \notin \Omega^R_i}
w_{-s}(k-j) w_{-s}(x-j) \chi_{\parti}(x).
\end{align}
Using this estimate, we bound the weighed Schur norm of the kernel $E^r$.

For every $k \in \Lambda$,
\begin{align*}
\norm{E^r_k w_{\alpha}(\cdot-k)}_1
&\lesssim
\sum_{i \in I} \sum_{j \notin \Omega^R_i}
w_{-s}(j-k) \int_{\parti} w_{-s}(x-j) dx
\\
&= 
\sum_{j \in \Lambda} \sum_{\substack{i \in I \\ d(j, \parti)>R}}
w_{-s}(j-k) \int_{\parti} w_{-s}(x-j) dx
\\
&\leq
\partn \sum_{j \in \Lambda} w_{-s}(j-k)
\int_{\bigcup \parti} w_{-s}(x-j) dx,
\end{align*}
where the union in the last integral ranges over all $i \in I$ such that $d(j, \parti)>R$.
Since the complement of the cube $Q_R(j)$ contains that union, we get,
\begin{align*}
\norm{E^r_k w_{\alpha}(\cdot-k)}_1 &\leq
\partn \sum_{j \in \Lambda} w_{-s}(j-k)
\int_{\RRd \setminus Q_R(j)} w_{-s}(x-j) dx
\\
&=
\partn \sum_{j \in \Lambda} w_{-s}(j-k)
\int_{\RRd \setminus Q_R(0)} w_{-s}(x) dx.
\end{align*}
The set $\Lambda-k$ has the same relative separation
that $\Lambda$, so Lemma \ref{conv_nodes} implies that
\begin{align*}
\norm{E^r_k w_{\alpha}(\cdot-k)}_1 &\lesssim
\partn \int_{\RRd \setminus Q_R(0)} w_{-s}(x) dx
\\
&\lesssim \partn R^{-(s-d)}.
\end{align*}
Since $r>2L$, it follows that $R>r/2$ and 
\begin{equation}
\label{first_error_bound}
\sup_k \norm{E^r_k w_\alpha(\cdot-k)}_1 \lesssim \partn r^{-(s-d)}.
\end{equation}
Using again the estimate in Equation \eqref{error_kernel_estimate},
we bound now $\supess_x \sum_k \abs{E^r_k(x) w_\alpha(x-k)}$.

Fix $x \in \RRd$ and let
\begin{align*}
I_x := \set{i \in I}{x \in \parti}.
\end{align*}
From Equation \eqref{bounded_overlaps} we know that $\# I_x \leq \partn$.
We now estimate,
\begin{align*}
\sum_{k \in \Lambda} \abs{E^r_k(x) w_\alpha(x-k)} &\lesssim
\sum_{i \in I_x} \sum_{j \notin \Omega^R_i} \sum_{k \in \Lambda}
w_{-s}(k-j) w_{-s}(x-j).
\end{align*}

Since $\Lambda$ and $\Lambda-\sett{j}$ have the same relative separation,
Lemma \ref{conv_nodes} implies that,
\begin{align*}
\sum_{k \in \Lambda} w_{-s}(k-j) \lesssim 1,
\end{align*}
so,
\begin{align*}
\sum_{k \in \Lambda} \abs{E^r_k(x) w_\alpha(x-k)} &\lesssim
\sum_{i \in I_x} \sum_{j \notin \Omega^R_i} w_{-s}(x-j).
\end{align*}
For $i \in I_x$ and $j \notin \Omega^R_i$, we have that 
$\abs{x-j} \geq d(j,E_i) > R$. It follows that,
\begin{align*}
\sum_{k \in \Lambda} \abs{E^r_k(x) w_\alpha(x-k)} 
&\leq
\sum_{i \in I_x} \sum_{j: \abs{x-j} > R} w_{-s}(x-j).
\end{align*}
Since the sets $\Gamma$ and $x-\Gamma$ have the same relative separation, Lemma \ref{conv_nodes} implies that,
\begin{align*}
\sum_{k \in \Lambda} \abs{E^r_k(x) w_\alpha(x-k)} & \lesssim \partn R^{-(s-d)}.
\end{align*}
Using again the fact that $r>2L$, it follows that,
\begin{equation}
\label{second_error_bound}
\supess_{x \in \RRd} \sum_{k \in \Lambda} \abs{E^r_k(x) w_\alpha(x-k)}
\lesssim
\partn r^{-(s-d)}.
\end{equation}
Combining the estimates in Equations \eqref{first_error_bound},
\eqref{second_error_bound} and \eqref{error_bound}, it follows that
\[
\norm{A^r(f)-f}_{L^p_v} \lesssim \norm{f}_{L^p_v} \partn r^{-(s-d)},
\]
for $r>2L$.

It remains to show that a similar estimate holds for $0<r \leq 2L$. In this case,
$r^{-(s-d)} \gtrsim 1$. So, it suffices to observe that $\norm{A^r}_{\splinepv \to L^p_v} \lesssim \partn$, uniformly on $r$. Reexamining the estimates given for the error kernel $E^r$, the desired conclusion follows.
\end{proof}
\begin{rem}
The technique in the proof of the theorem of using the frame expansion twice
is somehow analogous to the use of reproducing formulas in the classical
decomposition results for function spaces (see for example \cite{frjawe91}
and \cite{fegr89}.)

The formula defining the operator $A^r$ makes sense in $L^p_v$, but the
bound given in the theorem is only valid in the smaller subspace $\splinepv$,
where the ``reproducing formula'' (the frame expansion) is valid. By means of
it, the task of bounding the operator is reduced in the proof to the one of
controlling its behavior on atoms, much in the spirit of the classical
atomic decompositions (see \cite{frjawe91} and also \cite{grpi09}.)
\end{rem}
\subsection{Constructing new frames}
\label{new_frames}
We now interpret the approximation result of Section \ref{gluying} as a
method to produce new frames. Observe that, however, for some applications,
the estimate provided by Theorem \ref{glue} is all that is needed. If
concrete atoms and dual atoms are known, then the estimate in the theorem
provides an approximate reconstruction operator for the new system of atoms.

Consider again the ingredients of Section \ref{gluying} and
let $\sett{\partfi}_{i \in I}$ be a (measurable) partition of the unity
subordinated to $\partt$ (e.g. $\partfi = (\sum_j \chi_{\partj})^{-1}
\chi_{\parti}$.)

Let $v$ be a $w_\alpha$-moderated weight and let $P: L^p_v \to \splinepv$ be the universal projector onto $\splinepv$ (cf. Theorem \ref{universal_projector}.)

Fix a value of $r>0$ and consider the operator $B^r: \splinepv \to \splinepv$ given by $B^r := P \circ A^r$,
where
\[
A^r(f) := \sum_{i \in I} \sum_{k \in \Lambda_i^r} \ip{f}{\varphi^i_k}
\psi^i_k \partfi,
\]
and, as before, $\Lambda_i^r := \sett{k \in \Lambda_i / d(k,\parti) \leq r}$.

For each $i \in I$, let $\splinepvi$ be the $L^p_v$-closed linear space
generated by the atoms $\sett{\psi^i_k \partfi}_{k \in \Lambda_i^r}$. These spaces, of course, depend on $r$.

Consider the direct sum $\oplus_i \splinepvi$ as a subspace of
$\ell^p_{L^p_v}$, the space of $L^p_v$-valued $\ell^p$ families;
more precisely, $\oplus_i \splinepvi$ is the closure of the algebraic direct
sum within $\ell^p_{L^p_v}$. Let $\iota:
\oplus_i \splinepvi \to L^p_v$ be the operator given by
\begin{align*}
\iota( (f^i)_i ) := \sum_i f^i.
\end{align*}
Since $\partt$ is locally finite, $\iota$ is well-defined and bounded uniformly on
$p$ and $v$. Indeed,
for $1 \leq p < \infty$,
\begin{align*}
\norm{\sum_i f^i}_{L^p_v}^p &=
\int_\Rdst \abs{\sum_i f_i(x)}^p v(x)^p dx
\\
&\leq \int_\Rdst (\sum_{i\in I_x} \abs{f_i(x)})^p v(x)^p dx,
\end{align*}
where $I_x := \set{i \in I}{x \in \parti}$. Since $\# I_x \leq \partn$,
\begin{align*}
\norm{\iota((f^i)_i)}_{L^p_v}^p
&\leq \partn^p \int_\Rdst \sum_i \abs{f_i(x)}^p v(x)^p dx
\\
&= \partn^p \sum_i \norm{f_i}_{L^p_v}^p
\\
&= \partn^p \norm{(f^i)_i)}_{\ell^p_{L^p_v}}^p.
\end{align*}
So, $\norm{\sum_i f^i}_{L^p_v} \leq \partn \norm{(f^i)_i)}_{\ell^p_{L^p_v}}$. For $p=\infty$, a similar computation establishes the same estimate. Composing $\iota$ with the projector $P$, we get a \emph{synthesis operator} $Sy: \oplus_i \splinepvi \to \splinepv$.

For each $i \in I$, let $Q_i:\splinepv \to \splinepvi$ be given by 
\[
Q_i(f) := \sum_{k \in \Lambda_i^r} \ip{f}{\varphi^i_k} \psi^i_k \partfi. 
\]
The concentration conditions on Equation \eqref{frames_loc_2} imply that
all these operators are uniformly bounded. Moreover, they determine a map
$Q: \splinepv \to \oplus_i \splinepvi$, given by $Q(f) := \left( Q_i(f) \right)_i$. We will prove below that $Q$ is well defined and bounded. Assuming this for the moment, we have a commutative diagram,
\begin{equation}
\label{fusion_frame}
\xymatrix{ 
        \splinepv \ar[dr]_{B^r} \ar[r]^Q & \bigoplus_i \splinepvi \ar[d]^{Sy}\\ 
        & \splinepv }
\end{equation}
It follows from Theorem \ref{glue} that for a sufficiently large value of
$r>0$, $B^r$ is invertible and consequently Q is left-invertible and Sy is
right-invertible. This provides two ways of viewing $\splinepv$ as a retract
of $\oplus_i \splinepvi$. One is $Q$ (with retraction $(B^r)^{-1} Sy$) and
the other is $Q (B^r)^{-1}$ (with retraction Sy.) In the spirit of
\cite{os97} and \cite{caku04}, this can be called an \emph{exterior Banach
fusion frame} or an \emph{exterior stable splitting} (see also
\cite{fegr85} and \cite{fe87}.)

Now observe that each of the maps $Q_i$ can be factored through $\el^p_v$,
\[
\xymatrix{ 
        \splinepv \ar[dr]_{C_i} \ar[r]^{Q_i} & \splinepvi\\ 
        & \el^p_v(\Lambda^r_i) \ar[u]_{R_i} }
\]
where $C_i(f) := \left( \ip{f}{\varphi^i_k} \right)_k$ and
$R_i(c) := \sum_k c_k \psi^i_k \partfi$.

This induces a commutative diagram,
\[
\xymatrix{ 
        \splinepv \ar[dr]_C \ar[r]^{Q} & \bigoplus_i \splinepvi\\ 
        & \bigoplus_i \el^p_v(\Lambda^r_i) \ar[u]_R }
\]
where in $\bigoplus_i \el^p_v(\Lambda^r_i)$ we use the p-norm; that is
$\norm{(c^i)_i} := \norm{(\norm{c^i}_{\ell^p_v})_i}_{\ell^p}$. This is just a weighed $\ell^p$ norm; this way of presenting it is due to the structure of the index sets.
The boundedness of the operators $C$ and $R$ is proved in Theorem
\ref{gluying_frames} below. Assuming this fact for the moment, observe that
if $B^r$ is invertible, then Q is left-invertible and so is $C$. We formalize
this in the following Theorem.
\begin{theorem}
\label{gluying_frames}
Suppose that the assumptions of Section \ref{gluying} are satisfied. Let $v$
be a $w_\alpha$-moderated weight. Then for all sufficiently large values of
$r>0$,
\[
\sett{\varphi^i_k: i \in I, k \in \Lambda^r_i}
\]
is a Banach frame for $\splinepv$.

More precisely, if we define the index set $\Gamma := \bigcup_{i \in I} \Lambda^r_i \times \sett{i}$ and the weight $V(k,i) := v(k)$, then the analysis map
\begin{align*}
\splinepv &\to \el^p_V(\Gamma)
\\
f &\mapsto \left(\ip{f}{\varphi^i_k}\right)_{(k,i)}
\end{align*}
is bounded and left-invertible, for all sufficiently large values of $r>0$.

Moreover, the value of $r$ may be chosen uniformly for all $1 \leq p \leq \infty$
and every class of $w_\alpha$-moderated weights for which the respective constant (cf. Equation \eqref{moderated_weight}) is uniformly bounded.
\end{theorem}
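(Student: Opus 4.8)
The plan is to read off the frame property from the commutative diagram \eqref{fusion_frame} together with its refinement through $\bigoplus_i \el^p_v(\Lambda^r_i)$. Concretely, I would establish three facts: (i) that $B^r$ is invertible on $\splinepv$ once $r$ is large; (ii) that the operators $C$ and $R$ (whose boundedness was deferred to this theorem) are indeed bounded; and (iii) that the left-invertibility of the analysis map is then a formal consequence of the diagram. Of these, (i) is an immediate corollary of Theorem \ref{glue}, (iii) is diagram-chasing, and the only delicate analytic point is the boundedness of $C$, which hides a combinatorial estimate.

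First I would establish the invertibility of $B^r = P \circ A^r$ on $\splinepv$, where $A^r$ is exactly the operator of Theorem \ref{glue} with the choice $\Delta^r_i = \Lambda^r_i$. Since $P$ restricts to the identity on $\splinepv$, for $f \in \splinepv$ one has $B^r f - f = P(A^r f) - Pf = P(A^r f - f)$, so that
\[
\norm{B^r f - f}_{L^p_v} = \norm{P(A^r f - f)}_{L^p_v} \leq \norm{P}\, \norm{A^r f - f}_{L^p_v} \lesssim \norm{P}\, K \partn\, r^{-(s-d)} \norm{f}_{L^p_v}.
\]
By Theorem \ref{universal_projector} the norm of $P$ is uniformly bounded over $1 \leq p \leq \infty$ and over any class of $w_\alpha$-moderated weights with a common constant in \eqref{moderated_weight}, and the factor $K\partn$ of Theorem \ref{glue} enjoys the same uniformity. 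Hence there is a threshold $r_0$, independent of $p$ and $v$, with $\norm{B^r - I}_{\splinepv \to \splinepv} < 1$ for $r > r_0$, and a Neumann series inverts $B^r$; this simultaneously yields the asserted uniform choice of $r$.

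Next I would prove boundedness of $R$ and $C$. For $R$ one argues summand by summand: since $\abs{\psi^i_k \partfi} \leq \abs{\psi^i_k} \leq C w_{-(s+\alpha)}(\cdot-k)$, Example \ref{poly_amalgam} gives $\norm{\sett{\psi^i_k \partfi}_k}_{W(L^\infty,L^1_{w_\alpha})} \lesssim C\,\rel(\Lambda^r_i)$, bounded uniformly in $i$ by \eqref{unif_rel_sep}; Proposition \ref{mult_synth} together with the embedding $W(L^\infty,L^p_v)\hookrightarrow L^p_v$ then bounds each $R_i:\el^p_v(\Lambda^r_i)\to\splinepvi$ uniformly in $i$, and taking the $\ell^p$-sum over $i$ (a supremum when $p=\infty$) bounds $R$, under the isometric identification of $\bigoplus_i \el^p_v(\Lambda^r_i)$ with $\el^p_V(\Gamma)$ already noted. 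For $C$ this per-$i$ strategy fails, since the individual bounds $\norm{C_i f}$ summed over infinitely many $i$ diverge. Instead I would treat $\Phi := \sett{\varphi^i_k}_{(k,i)\in\Gamma}$ as a \emph{single} family indexed by the set with multiplicity $\Gamma$, attaching the node location $k$ to $(k,i)$, and apply Proposition \ref{analysis} once; this reduces boundedness of $C:\splinepv\to\el^p_V(\Gamma)$ to the estimate $\norm{\Phi}_{W(L^\infty,L^1_{w_\alpha})}<\infty$, which by Example \ref{poly_amalgam} amounts to $\rel(\Gamma)<\infty$.

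The hard part will be precisely this finiteness of $\rel(\Gamma)$, and it is here that the uniform local finiteness \eqref{loc_finite_partition} enters (consistently with the remark following Theorem \ref{glue}). A pair $(k,i)$ contributes to $\Gamma\cap([0,1]^d+x)$ only when $k\in[0,1]^d+x$ and $d(k,\parti)\leq r$: the first condition allows, for each fixed $i$, at most $\rel(\Lambda_i)\leq\sup_i\rel(\Lambda_i)$ choices of $k$, while the two conditions together force $\parti$ to meet a fixed cube $Q'$ of side comparable to $r$ centered near $x$, so the number of admissible $i$ is at most $\partnqp$, finite for the (now fixed) value of $r$; multiplying the two counts gives $\rel(\Gamma)<\infty$. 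With $C$ and $R$ bounded the conclusion is immediate: $B^r = Sy\circ Q = Sy\circ R\circ C$ is invertible, whence $C$ is left-invertible with explicit left inverse $(B^r)^{-1}\circ Sy\circ R$, which is exactly the statement that $f\mapsto(\ip{f}{\varphi^i_k})_{(k,i)}$ is bounded and left-invertible. Finally, for $p=\infty$ I would check that the coefficient sequence lies in $\el^\infty_V = c^0_V$ rather than merely in $\ell^\infty_V$ by a density argument, since this is clear for finite combinations of atoms and passes to the $C^0_v$-closure by continuity of $C$.
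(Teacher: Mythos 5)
Your proposal is correct and follows essentially the same route as the paper's proof: invertibility of $B^r$ via Theorem \ref{glue} and the Neumann series, boundedness of $R$ via Example \ref{poly_amalgam} and Proposition \ref{mult_synth}, boundedness of $C$ by treating $\sett{\varphi^i_k}_{(k,i)\in\Gamma}$ as a single family over the set with multiplicity $\Gamma$ and proving $\rel(\Gamma) \leq \partnqp \sup_i \rel(\Lambda_i) < \infty$ exactly as in the paper, and then the diagram chase. Your added attention to the $p=\infty$ case (landing in $c^0_V$ rather than merely $\ell^\infty_V$) is a detail the paper glosses over, but it does not change the argument.
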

\begin{rem}
Observe that although we are constructing a new frame $\sett{\varphi^i_k}_{i
\in I, k \in \Lambda^r_i}$ out of the pieces $\sett{\varphi^i_k}_{k \in
\Lambda^r_i}$, we do not claim that each of these pieces forms a frame
sequence. This construction should be compared to the methods in
\cite{alcamo04}, \cite{fo04} and \cite{caku04} where a global frame is
built \emph{from} local (possibly exterior) frames for certain subspaces.
\end{rem}
\begin{rem}
As a related result, we mention Lemma 4.7 in \cite{rosh97-1} where it is shown that
if $\set{2^{k/2} \psi(2^k \cdot -j)}{k,j \in \Zst}$ is a wavelet frame for
$L^2(\Rst)$ and the wavelet $\psi$ satisfies a mild smoothness condition,
then for all sufficiently large values of $r>0$, the system of fine scales
$\set{2^k \psi(2^k \cdot +j)}{k \in \Zst, j \geq 0}$
forms an exterior frame for the subspace
$H_r := \set{f \in L^2(\Rst)}{\hat{f} \equiv 0 \mbox{ on } [-r,r]}$.
\end{rem}
\begin{proof}
Using Theorem \ref{glue}, choose a value of $r>0$ such that the operator $B^r$ is invertible. By the discussion above, it only remains to bound the operators $C$ and $R$. Consider the index set $\Gamma$ as a set with multiplicity (cf. Section \ref{set_mult}), where we map each of the sets $\Lambda^r_i \times \sett{i}$ into $\Rdst$ by discarding the second coordinate.

The fact that the sets $\Lambda_i$ are uniformly relatively separated 
(cf. Equation \eqref{unif_rel_sep}) and that the covering $\partt$ is
uniformly locally finite (cf. Equation \eqref{loc_finite_partition}) implies
that $\Gamma$ is relatively separated. Indeed, let $Q$ be the unit cube and
$Q':=Q+[-r,r]^d$. For any $x \in \Rdst$,
\begin{align*}
\set{(k,i) \in \Gamma}{k \in Q+\sett{x}}
&= \bigcup_{i \in I} \set{k \in \Lambda_i \cap (Q+\sett{x})}{d(k,E_i) \leq r} \times \sett{i}
\\
&\subseteq \bigcup_{i \in I_x} \left( \Lambda_i \cap (Q+\sett{x}) \right) \times \sett{i},
\end{align*}
where $I_x := \set{i \in I}{E_i \cap \left(Q'+\sett{x}\right) \not= \emptyset}$.
Hence $\rel(\Gamma) \leq \partnqp \sup_i \rel(\Lambda_i) < \infty$ (cf.
Equation \eqref{loc_finite_partition}.)

The family of atoms $\sett{\varphi^i_k}_{(k,i) \in \Gamma}$ satisfies a polynomial concentration condition. By Example \ref{poly_amalgam}, the family has finite $W(L^\infty,L^1_{w_\alpha})$ norm. The boundedness of the operator $C$ now follows from Propositions \ref{analysis}.

For the boundedness of $R$, observe that the families $\sett{\psi^i_k \partfi}_{k \in \Lambda^r_i}$ satisfy a uniform polynomial concentration condition and their nodes are uniformly relatively separated. Hence, by Example \ref{poly_amalgam},
\[
M:=\sup_i \norm{\sett{\psi^i_k \partfi}_k}_{W(L^\infty,L^1_{w_\alpha})} < \infty.
\]
Consequently, by Proposition \ref{mult_synth}, for $1 \leq p < \infty$,
\begin{align*}
\norm{R(c)}_{\ell^p_{L^p_v}}^p
&\leq \sum_i \norm{\sum_{k \in \Lambda^r_i} c^i_k \psi^i_k \partfi}_{L^p_v}^p
\\
&\lesssim M^p \sum_i \norm{c^i}_{\ell^p_v}^p
\\
&= M^p \norm{c}_{\ell^p_V}^p.
\end{align*}
So, $\norm{R(c)}_{\ell^p_{L^p_v}} \lesssim \norm{c}_{\ell^p_V}$. A similar computation shows that the same estimate is valid for $p=\infty$.
\end{proof}
\section{Quality statements about dual atoms}
\label{sec_quality}
The theory of localized frames asserts that, if the atoms in a frame are
sufficiently localized (in an abstract sense), then the dual atoms are also
localized. Theorem \ref{quality_theorem} below shows that if a family of
frames is sufficiently and uniformly localized, then the respective dual
families are also uniformly localized. This is relevant to the construction
in Section \ref{sec_surgery}.

To obtain this qualitative statement we need to know not only that the polynomial off-diagonal decay of a matrix $M$ is inherited by its pseudo-inverse $M^\dagger$, but also what qualities of the original matrix $M$ determine the constants governing the off-diagonal decay of $M^\dagger$. This extra information is essentially provided by the main result in \cite{su07-5}. Lemma \ref{pseudo_inverse} below states the required qualitative statement and Theorem \ref{quality_theorem} is just an application of it to our setting.
\begin{lemma}
\label{pseudo_inverse}
Let $\Gamma \subseteq \Rdst$ be a relatively separated set and
let $M \in B(\ell^2(\Gamma))$ be a positive operator.
Assume the following.
\begin{itemize}
 \item $M$ satisfies,
 \begin{align*}
 \abs{M_{k,j}} \leq C (1+\abs{k-j})^{-s}
 \qquad (k,j \in \Gamma),
 \end{align*}
for some constants $C>0$ and $s>d$.
 \item  The spectrum of $M$, satisfies,
 \begin{align*}
 \left(\sigma(M) \setminus \sett{0}\right) \cap B_A(0) = \emptyset,
 \end{align*}
for some $A>0$ (here $B_A(0) \subseteq \bC$ is the ball of radius $A$ centered at 0.)
 \item $\rel(\Gamma) \leq R$, for some $0 \leq R < \infty$.
\end{itemize}
Then $M^\dagger$, the Moorse-Penrose pseudo-inverse of $M$, satisfies,
\begin{align*}
 \abs{M^\dagger_{k,j}} \leq C' (1+\abs{k-j})^{-s}
 \qquad (k,j \in \Gamma),
\end{align*}
for some constant $C'$ that only depends on $C, s, d, A$ and $R$.
\end{lemma}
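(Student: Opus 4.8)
The plan is to represent $M^\dagger$ by a Cauchy integral of the resolvent of $M$ and then transfer the polynomial off-diagonal decay from $M$ to the resolvent by a quantitative Wiener lemma. Write $\mathcal{A}_s$ for the Jaffard class of matrices $A=(A_{k,j})_{k,j\in\Gamma}$ equipped with $\norm{A}_{\mathcal{A}_s}:=\inf\sett{a\ge 0: \abs{A_{k,j}}\le a\,(1+\abs{k-j})^{-s}}$; the assertion to be proved is exactly that $\norm{M^\dagger}_{\mathcal{A}_s}\le C'$ with $C'$ depending only on $C,s,d,A,R$, and by hypothesis $\norm{M}_{\mathcal{A}_s}\le C$. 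Two consequences of Lemma \ref{conv_nodes}, depending on $\Gamma$ only through $R$, will be used: by part (a) (a Schur test) $\norm{M}_{B(\ell^2)}\le\sup_k\sum_j\abs{M_{k,j}}\le CKR=:B$, so that $\sigma(M)\subseteq\sett{0}\cup[A,B]$; and by part (c), $\mathcal{A}_s$ is a Banach algebra with $\norm{AB}_{\mathcal{A}_s}\le KR\,\norm{A}_{\mathcal{A}_s}\norm{B}_{\mathcal{A}_s}$ (genuine submultiplicativity after renormalizing), where $K\lesssim\max\sett{1,1/(s-d)}$.

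The gap hypothesis $(\sigma(M)\setminus\sett{0})\cap B_A(0)=\emptyset$ makes $0$ an isolated point of $\sigma(M)$, so the spectrum splits into $\sett{0}$ and $\sigma(M)\cap[A,B]$. Let $g$ be holomorphic on a neighborhood of $\sigma(M)$, with $g\equiv 0$ near $0$ and $g(z)=1/z$ near $[A,B]$. Then $g(M)=M^\dagger$: on $\ker M$ it is $0$ and on $\overline{\operatorname{ran}M}$ it is the inverse of $M$. Picking the contour $\mathcal{C}:=\partial\big([A/2,B+A/2]\times[-A/2,A/2]\big)$, which encircles $[A,B]$, excludes $0$, and keeps $\operatorname{dist}(z,\sigma(M))\ge A/2$, the Riesz--Dunford calculus gives
\[
M^\dagger=\frac{1}{2\pi i}\oint_{\mathcal{C}}\frac{1}{z}\,(zI-M)^{-1}\,dz,
\]
where the length of $\mathcal{C}$ together with $\min_{z\in\mathcal{C}}\abs{z}\ge A/2$ are controlled by $A$ and $B$.

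The key step, and the main obstacle, is a bound on $\norm{(zI-M)^{-1}}_{\mathcal{A}_s}$ uniform for $z\in\mathcal{C}$. For each such $z$ one has $zI-M\in\mathcal{A}_s$ with $\norm{zI-M}_{\mathcal{A}_s}\le\abs{z}+C$ (the identity is diagonal, hence trivially in $\mathcal{A}_s$), while $zI-M$ is invertible in $B(\ell^2)$ with $\norm{(zI-M)^{-1}}_{B(\ell^2)}\le 1/\operatorname{dist}(z,\sigma(M))\le 2/A$. The quantitative form of Wiener's lemma of \cite{su07-5} --- precisely the ingredient this article relies on --- then yields $(zI-M)^{-1}\in\mathcal{A}_s$ with $\norm{(zI-M)^{-1}}_{\mathcal{A}_s}$ bounded by a function of $\norm{zI-M}_{\mathcal{A}_s}$, $\norm{(zI-M)^{-1}}_{B(\ell^2)}$, $s$, $d$ and $R$ only. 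As all these quantities are uniformly controlled over $\mathcal{C}$ by $C,s,d,A,R$, I obtain $\sup_{z\in\mathcal{C}}\norm{(zI-M)^{-1}}_{\mathcal{A}_s}\le C''$.

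Finally, since the resolvent identity together with the algebra property of $\mathcal{A}_s$ makes $z\mapsto(zI-M)^{-1}$ Lipschitz into $\mathcal{A}_s$, the displayed integral converges as a Bochner integral in $\mathcal{A}_s$ and
\[
\norm{M^\dagger}_{\mathcal{A}_s}\le\frac{1}{2\pi}\,\mathrm{length}(\mathcal{C})\,\Big(\min_{z\in\mathcal{C}}\abs{z}\Big)^{-1}C''=:C',
\]
which unwinds to the claimed bound $\abs{M^\dagger_{k,j}}\le C'(1+\abs{k-j})^{-s}$ with $C'$ depending only on $C,s,d,A,R$. The only genuinely hard input is the quantitative decay estimate for the resolvent; everything else is a routine marriage of the convolution bounds of Lemma \ref{conv_nodes} with the holomorphic functional calculus.
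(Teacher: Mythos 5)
Your proof is correct and follows essentially the same route as the paper's: both represent $M^\dagger$ as a Cauchy--Riesz--Dunford integral of $\frac{1}{z}(zI-M)^{-1}$ over a rectangular contour enclosing $[A,B]$ and avoiding $0$, and both reduce the key step to the quantitative form of Sun's Wiener lemma applied to the resolvent, with all inputs (operator norm of $M$, off-diagonal decay of $zI-M$, spectral gap) controlled uniformly by $C,s,d,A,R$. The only differences are cosmetic: you work with the Jaffard-class norm and Bochner integration where the paper argues entrywise, and you justify the identity $g(M)=M^\dagger$ directly via the spectral theorem rather than by citing the adaptation of Sun's Theorem 5.1.
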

\begin{proof}
If we modify the hypothesis of the theorem so that $M$ is assumed to be an invertible normal operator
instead of a positive pseudo-invertible one (and hence $M^\dagger=M^{-1}$), then the conclusion follows from \cite[Theorem 4.1]{su07-5}.

The case of the pseudo-inverse is treated in \cite{su07-5}, but no explicit reference to the qualities involved in the off-diagonal decay of the pseudo-inverse is made. However, the proof given in \cite[Theorem 5.1]{su07-5} (see also \cite{fogr05}) can be slightly adapted to obtain a quantitative conclusion. We only sketch the modifications.

Under the assumptions of the theorem,
\begin{align}
\label{pseudo_inverse_formula}
M^\dagger = \frac{1}{2\pi i} \int_\gamma \frac{1}{z} \left( zI-M \right)^{-1} dz,
\end{align}
where the curve $\gamma$ is the rectangle with vertices  $A/2 \pm i$,
$\norm{M}+A/2 \pm i$ oriented anti-clockwise; here $\norm{M}$ denotes the
norm of $M$ in $B(\ell^2(\Gamma))$.
Consequently, for $k,j \in \Gamma$,
\begin{align}
\label{pseudo_inverse_formula_kj}
M^\dagger_{kj} = \frac{1}{2\pi i} \int_\gamma \frac{1}{z} \left( zI-M \right)^{-1}_{kj} dz.
\end{align}
Observe that $\norm{M}$ can be bounded in terms of $d, s, C$ and $R$ (by interpolating its $\ell^1 \to \ell^1$ and $\ell^\infty \to \ell^\infty$ norm) and that the length of $\gamma$ is $2\norm{M} + 2$. For $z$ in the curve $\gamma$, $\abs{z} \lesssim \norm{M}+1$ and $\abs{1/z} \leq 2/A$. Hence, it suffices to bound
the off-diagonal decay of the resolvent $\left(zI-M \right)^{-1}$ in terms of the allowed parameters.

Let $z$ lie in the curve $\gamma$. The distance from $z$ to $\sigma(M)$ is at least $m:=\min \sett{1,A/2}$, so
$\left(\sigma(zI-M) \setminus \sett{0} \right) \cap B_m(0) = \emptyset$. 
Moreover, for $k,j \in \Gamma$,
\begin{align*}
\abs{\left( zI-M \right)_{kj}}
&\leq \abs{z} \delta_{kj} + C(1+\abs{k-j})^{-s}
\\
&\lesssim (C+\norm{M}+1)(1+\abs{k-j})^{-s}.
\end{align*}
By \cite[Theorem 4.1]{su07-5}, the off-diagonal decay of $\left(zI-M \right)^{-1}$ 
is bounded by a constant depending only on allowed parameters.
\end{proof}
Now we apply this estimate to spline-type spaces.
\begin{theorem}
\label{quality_theorem}
Let $\spline \equiv \splinet(F, \Lambda)$ be a spline-type space, where the atoms $F$
satisfy,
\begin{equation*}
\abs{f_k(x)} \leq C \left(1+\abs{x-k}\right)^{-s}
\qquad (x \in \Rdst, k\in\Lambda),
\end{equation*}
for some constant $C>0$ and $s>0$. Assume the following.
\begin{itemize}
\item For each $i \in I$, we have a family of measurable functions
$\sett{\varphi^i_k}_{k \in \Lambda_i}$ that satisfy the following uniform concentration condition around their nodes $\Lambda_i$:
\begin{equation}
\abs{\varphi^i_k(x)} \leq C' \left(1+\abs{x-k}\right)^{-s}
\qquad (x \in \Rdst, k\in\Lambda_i),
\end{equation}
for some constant $C'>0$ (independent of $i$.)

\item The set of nodes $\Lambda_i$ are \emph{uniformly relatively separated}. That is,
\[
\sup_{i \in I} \rel (\Lambda_i) < \infty.
\]

\item Each family $\sett{\varphi^i_k}_k$ satisfies the (exterior) frame
inequality\footnote{Note that the functions $\varphi^i_k$ need not to
belong to $\splinet$, cf. Section \ref{sec_frames}.},
\begin{equation}
A \norm{f}_2^2 \leq \sum_k \abs{\ip{f}{\varphi^i_k}}^2 \leq B \norm{f}_2^2,
\end{equation} 
for $f \in \splinet$ and constants $0<A\leq B < \infty$ that are independent of $i$.
\end{itemize}
Then, the respective families of canonical dual frame sequences
$\sett{\psi^i_k}_k \subseteq \splinet$ satisfy,
\begin{equation}
\abs{\psi^i_k(x)} \leq D \left(1+\abs{x-k}\right)^{-s}
\qquad (x \in \Rdst, k\in\Lambda_i),
\end{equation}
for some constant D, independent of i.
\end{theorem}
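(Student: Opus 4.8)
The plan is to realize each canonical dual family as the synthesis, against the \emph{projected} analyzing atoms, of a single pseudo-inverse matrix, and then to apply Lemma \ref{pseudo_inverse} with constants that are uniform in $i$. Write $P:\LtRd \to \splinet$ for the orthogonal projector and set $\tilde\varphi^i_k := P\varphi^i_k \in \splinet$. For $f \in \splinet$ one has $\ip{f}{\varphi^i_k} = \ip{f}{\tilde\varphi^i_k}$, so the exterior frame inequality says precisely that $\sett{\tilde\varphi^i_k}_k$ is a frame for $\splinet$ with bounds $A,B$ independent of $i$. Let $T_i:\ell^2(\Lambda_i) \to \splinet$, $T_i c = \sum_k c_k \tilde\varphi^i_k$, be the synthesis operator, $S_i = T_i T_i^*$ the frame operator, and $G^i = T_i^* T_i$ the Gram matrix, so that $G^i_{kl} = \ip{\tilde\varphi^i_l}{\tilde\varphi^i_k} = \ip{\tilde\varphi^i_l}{\varphi^i_k}$. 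The canonical dual atoms are $\psi^i_k = S_i^{-1}\tilde\varphi^i_k$. Using the identity $S_i^{-1} T_i = T_i (G^i)^\dagger$ (which holds because $T_i$ annihilates $\ker T_i = (\operatorname{ran} T_i^*)^\perp$), we obtain the representation
\[
\psi^i_k = \sum_{l \in \Lambda_i} (G^i)^\dagger_{lk}\, \tilde\varphi^i_l .
\]
Thus it suffices to control, uniformly in $i$, the pointwise decay of the projected atoms $\tilde\varphi^i_l$ and the off-diagonal decay of $(G^i)^\dagger$.

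For the projected atoms, recall $P g = \sum_{m\in\Lambda}\ip{g}{g_m} f_m$, where $\sett{g_m}_{m\in\Lambda}$ is the canonical dual family of the spline atoms $F$; by localization theory \cite{gr04-1} (equivalently, by applying Lemma \ref{pseudo_inverse} to the self-correlation matrix of $F$) these satisfy $\abs{g_m(x)} \lesssim (1+\abs{x-m})^{-s}$. By Lemma \ref{conv_bound},
\[
\abs{\ip{\varphi^i_k}{g_m}} \lesssim \int_\Rdst (1+\abs{x-k})^{-s}(1+\abs{x-m})^{-s}\,dx \lesssim (1+\abs{k-m})^{-s},
\]
and then, summing against $\sett{f_m}$ and using Lemma \ref{conv_nodes}(c) over the relatively separated set $\Lambda$,
\[
\abs{\tilde\varphi^i_k(x)} \leq \sum_m \abs{\ip{\varphi^i_k}{g_m}}\,\abs{f_m(x)} \lesssim (1+\abs{x-k})^{-s},
\]
with implicit constants depending only on $C,C',s,d$ and $\rel(\Lambda)$ --- in particular not on $i$. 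The same two estimates give the Gram decay $\abs{G^i_{kl}} = \abs{\ip{\tilde\varphi^i_l}{\varphi^i_k}} \lesssim (1+\abs{k-l})^{-s}$, again uniformly in $i$.

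It remains to verify the hypotheses of Lemma \ref{pseudo_inverse} for each $G^i$ with parameters independent of $i$. Each $G^i$ is a positive operator on $\ell^2(\Lambda_i)$; its off-diagonal decay is the bound just obtained; its relative-separation parameter is at most $\sup_i \rel(\Lambda_i) < \infty$; and since $G^i = T_i^* T_i$ and $S_i = T_i T_i^*$ share the same nonzero spectrum, the frame inequality forces $\sigma(G^i)\setminus\sett{0} \subseteq [A,B]$, so the nonzero spectrum is bounded away from $0$ by $A$. Lemma \ref{pseudo_inverse} therefore yields $\abs{(G^i)^\dagger_{lk}} \leq C_1 (1+\abs{l-k})^{-s}$ with $C_1$ depending only on the uniform data. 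Inserting this together with the uniform decay of $\tilde\varphi^i_l$ into the representation of $\psi^i_k$ and applying Lemma \ref{conv_nodes}(c) once more (to the set $\Lambda_i - k$, whose relative separation equals that of $\Lambda_i$) gives
\[
\abs{\psi^i_k(x)} \lesssim \sum_{l \in \Lambda_i}(1+\abs{l-k})^{-s}(1+\abs{x-l})^{-s} \lesssim \rel(\Lambda_i)\,(1+\abs{x-k})^{-s} \leq D\,(1+\abs{x-k})^{-s},
\]
with $D$ independent of $i$, as claimed.

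The only genuinely delicate point is that all constants remain uniform in $i$, which distinguishes the statement from a single application of localization theory. This uniformity is guaranteed precisely because Lemma \ref{pseudo_inverse} makes the decay constant of the pseudo-inverse depend only on the decay constant of $G^i$, the spectral gap $A$, and $\sup_i \rel(\Lambda_i)$ --- all of which are uniform by hypothesis --- while the decay constant of $G^i$ and of the $\tilde\varphi^i_l$ depend only on $C,C',s,d$ and the fixed quantity $\rel(\Lambda)$. Everything else is bookkeeping with the convolution estimates of Lemmas \ref{conv_bound} and \ref{conv_nodes}.
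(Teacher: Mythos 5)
Your proof is correct and follows essentially the same route as the paper's: project the exterior atoms onto $\splinet$ via the localized dual system $G$ of $F$, establish uniform polynomial decay of the projected atoms and of their Gram matrices, invoke Lemma \ref{pseudo_inverse} (with spectrum contained in $\sett{0}\cup[A,B]$ and uniform relative separation) to control $(G^i)^\dagger$, and conclude with Lemma \ref{conv_nodes}(c). The only difference is that you explicitly verify the identity $S_i^{-1}T_i = T_i(G^i)^\dagger$ behind the representation $\psi^i_k = \sum_l (G^i)^\dagger_{lk}\tilde\varphi^i_l$, which the paper simply asserts.
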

\begin{proof}
Let $G \equiv \sett{g_k}_k$ be the canonical dual frame of $F$. By the results in \cite{fogr05}, there exists a constant $C''>0$ such that
\begin{equation*}
\abs{g_k(x)} \leq C'' \left(1+\abs{x-k}\right)^{-s}
\qquad (x \in \Rdst, k\in\Lambda).
\end{equation*}
For each $i \in I$ and $k \in \Lambda_i$, let $\overline{\varphi}^i_k$ be
the orthogonal projection of $\varphi^i_k$ on $\splinet$. Each of the functions has
the expansion,
\begin{align*}
\overline{\varphi}^i_k = \sum_{j\in \Lambda} \ip{\varphi^i_k}{g_j} f_j.
\end{align*}
Consequently using Lemmas \ref{conv_bound} and \ref{conv_nodes},
\begin{align*}
\abs{\overline{\varphi}^i_k}
&\lesssim C C' C'' \sum_{j\in \Lambda} w_{-s}(k-j) w_{-s}(\cdot-j)
\\
&\lesssim C C' C'' \rel(\Lambda) w_{-s}(\cdot-k).
\end{align*}
Since the exterior frame condition in the hypothesis is also satisfied by the functions
$\sett{\overline{\varphi}^i_k}_k$, we can replace each $\varphi^i_k$ by
$\overline{\varphi}^i_k$ and assume without loss of generality that
$\varphi^i_k \in \splinet$.

For each $i \in I$, consider the Gram matrix $M^i$ given by,
\[
M^i_{kj} := \ip{\varphi^i_k}{\varphi^i_j}
\qquad (k,j \in \Lambda_i).
\]
By Lemma \ref{conv_bound}, it follows that
\[
\abs{M^i_{k,j}} \leq K (1+\abs{k-j})^{-s}
\qquad (k,j \in \Lambda_i),
\]
for some constant $K$ that depends on $s$ and $C'$. Moreover, since each $\sett{\varphi^i_k}_k$ is a frame with bounds A and B,
the spectrum of $M^i$ satisfies,
\[
\sigma(M^i) \subseteq \sett{0} \cup [A,B].
\]

By Lemma \ref{pseudo_inverse}, the pseudo-inverse of $M^i$ satisfies
\[
\abs{(M^i)^\dagger_{k,j}} \leq K' (1+\abs{k-j})^{-s}
\qquad (k,j \in \Lambda_i),
\]
for some constant $K'$ independent of i.

Each of the dual elements $\psi^i_k$ is given by,
\[
\psi^i_k = \sum_{j \in \Lambda_i} (M^i)^\dagger_{k,j} \varphi^i_j.
\]
Therefore,
\begin{align*}
\abs{\psi^i_k(x)} &\leq CK' \sum_{j \in \Lambda_i} w_{-s}(k-j) w_{-s}(j-x)
\end{align*}
Using Lemma \ref{conv_nodes} (c) with $\Gamma := \Lambda_i - \sett{x}$ and
$k' := k-x$, it follows that
\begin{align*}
\abs{\psi^i_k(x)} &\leq K'' \rel(\Gamma) w_{-s}(x-k) = K'' \rel(\Lambda_i) w_{-s}(x-k).
\end{align*}
For some constant that $K''$ independent of i. Since the sets of nodes are uniformly relatively separated, the conclusion follows.
\end{proof}
\section{Applications}
\label{sec_applications}
\subsection{Spline-type spaces}
We now combine the results of Sections \ref{new_frames} and \ref{sec_quality} in a concrete statement.
\begin{theorem}
\label{splines_glue}
Let $\spline = \spline(F, \Lambda)$ be a spline-type space. Assume the following.
\begin{itemize}
\item The atoms $F$ satisfy the polynomial concentration condition around their nodes,
\begin{equation}
\label{spline_frame_loc}
\abs{f_k(x)} \leq C \left(1+\abs{x-k}\right)^{-(s+\alpha)}
\qquad (x \in \Rdst, k\in\Lambda),
\end{equation}
for some constants $C>0$, $s>d$ and $\alpha \geq 0$.

\item We are given a family of exterior frames for $\splinet$,
$\sett{\varphi^i_k}_{k \in \Lambda_i}$, $i \in I$,
that satisfy the following uniform polynomial concentration condition around their nodes $\Lambda_i$,
\begin{equation}
\label{spline_frames_loc_2}
\abs{\varphi^i_k(x)} \leq C' \left(1+\abs{x-k}\right)^{-(s+\alpha)}
\qquad (x \in \Rdst, k\in\Lambda_i, i \in I),
\end{equation}
for some constant $C'>0$.

\item The exterior frames $\left( \sett{\varphi^i_k}_k \right)_{i\in I}$
share a uniform lower (and upper) bound. That is,
\begin{equation}
\label{spline_frames_unif}
A \norm{f}_2^2 \leq \sum_k \abs{\ip{f}{\varphi^i_k}}^2 \leq B \norm{f}_2^2
\qquad (f \in \splinet),
\end{equation}
holds for some constants $0<A\leq B<\infty$.
\footnote{Observe that Equation \eqref{spline_frames_loc_2} already implies the existence of a uniform upper bound B.}

\item The sets of nodes $\Lambda_i$ are \emph{uniformly relatively separated}
(cf. Equation \eqref{unif_rel_sep}.)

\item We have a measurable covering of $\Rdst$, $\partt \equiv \sett{\parti}_{i \in I}$ that is \emph{uniformly locally finite} (cf. Equation \eqref{loc_finite_partition}.)
\end{itemize}
Then, for all sufficiently large values of $r>0$,
\[
\sett{\varphi^i_k: i \in I, d(k,E_i) \leq r}
\]
is an exterior Banach frame for $\splinepv$.

More precisely, if we define the index set $\Gamma^r := \bigcup_{i \in I}
\Lambda^r_i \times \sett{i}$ and the weight $V(k,i) := v(k)$, then the
analysis map
\begin{align*}
\splinepv &\to \el^p_V(\Gamma^r)
\\
f &\mapsto \left(\ip{f}{\varphi^i_k}\right)_{(k,i)}
\end{align*}
is bounded and left-invertible.

Moreover, the value of r may be chosen uniformly for all $1 \leq p \leq \infty$
and every class of $w_\alpha$-moderated weights for which the respective constants (cf. Equation \eqref{moderated_weight}) are uniformly bounded.
\end{theorem}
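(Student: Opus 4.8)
The plan is to deduce the statement from Theorem \ref{gluying_frames} by manufacturing, for each given exterior frame $\sett{\varphi^i_k}_{k\in\Lambda_i}$, a system of dual atoms localized \emph{uniformly} in $i$; once this is in place, all the standing hypotheses of Section \ref{gluying} hold and the earlier theorem applies directly. I would begin by disposing of the atoms of $\spline$ itself: the pointwise bound \eqref{spline_frame_loc} on $F$, together with the localization theory of \cite{gr04-1,fogr05} (equivalently, Theorem \ref{quality_theorem} applied to the single frame $F$), produces canonical dual atoms $G\equiv\sett{g_k}_k$ with $\abs{g_k(x)}\leq C''(1+\abs{x-k})^{-(s+\alpha)}$, so that $F$ and $G$ jointly satisfy \eqref{frame_loc}.

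The essential step is the corresponding uniform statement for the $\varphi^i$. Here I would invoke Theorem \ref{quality_theorem}, reading its decay exponent $s$ as $s+\alpha$; its three hypotheses are exactly what is assumed here, namely the uniform concentration \eqref{spline_frames_loc_2}, the uniform relative separation \eqref{unif_rel_sep}, and the uniform frame bounds \eqref{spline_frames_unif}. The theorem then furnishes canonical dual frame sequences $\sett{\psi^i_k}_k\subseteq\splinet$ with
\[
\abs{\psi^i_k(x)}\leq D\,(1+\abs{x-k})^{-(s+\alpha)}
\qquad (x\in\Rdst,\ k\in\Lambda_i),
\]
and $D$ independent of $i$. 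Since each $\sett{\psi^i_k}_k$ is the canonical dual of $\sett{\varphi^i_k}_k$ within $\splinet$, the reconstruction $f=\sum_k\ip{f}{\varphi^i_k}\psi^i_k$ holds for every $f\in\splinet$, so the pairs $\bigl(\sett{\psi^i_k}_k,\sett{\varphi^i_k}_k\bigr)$ are genuine frame pairs for $\splinet$; combined with \eqref{spline_frames_loc_2} this yields condition \eqref{frames_loc_2} with a common constant.

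At this point every item of Section \ref{gluying} has been checked: $F$ and $G$ are polynomially concentrated, the frame pairs satisfy \eqref{frames_loc_2}, the covering $\partt$ is uniformly locally finite \eqref{loc_finite_partition}, and the $\Lambda_i$ are uniformly relatively separated. Theorem \ref{gluying_frames} then yields, for all sufficiently large $r$, the boundedness and left-invertibility of the analysis map $f\mapsto\bigl(\ip{f}{\varphi^i_k}\bigr)_{(k,i)}$ into $\el^p_V(\Gamma^r)$, together with the stated uniformity of the threshold $r$ over $1\leq p\leq\infty$ and over classes of $w_\alpha$-moderated weights with bounded moderation constant. This is precisely the assertion.

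The genuine analytic content having been isolated into Theorem \ref{quality_theorem} (off-diagonal decay of the pseudo-inverse, resting on the quantitative Wiener-type lemma of \cite{su07-5}) and Theorem \ref{glue}, the only point that really demands care is the bookkeeping of the second paragraph: it is the \emph{dual} frames $\psi^i_k$, which play the role of synthesis atoms in the operator $A^r$, that must inherit a concentration bound uniform in $i$, and this uniformity is exactly what Theorem \ref{quality_theorem} supplies. I do not expect any obstacle beyond this matching of hypotheses.
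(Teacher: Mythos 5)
Your proposal is correct and follows exactly the paper's own route: the paper's proof is literally ``Combine Theorems \ref{gluying_frames} and \ref{quality_theorem},'' and your argument spells out that combination faithfully, including the two points that need care (obtaining localized dual atoms $G$ for $F$ itself, and using Theorem \ref{quality_theorem} with exponent $s+\alpha$ to get dual frames $\psi^i_k$ localized uniformly in $i$ so that the standing hypotheses of Section \ref{gluying} hold).
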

\begin{proof}
Combine Theorems \ref{gluying_frames} and \ref{quality_theorem}.
\end{proof}
\subsection{Shift invariant spaces}
\label{sis_sec}
As a corollary of Theorem \ref{splines_glue} we describe a method to piece
together bases of lattice translates. First recall some notation and facts
for shift-invariant spaces (see for example \cite{rosh95}, \cite{dedero94-1}
and \cite{bo00-2}.) Given a lattice\footnote{By a lattice $\Lambda$ we mean a
set of the form $\Lambda=A \Zdst$, where $A \in \Rst^{n \times n}$ is an
invertible matrix. This is sometimes called a full-rank lattice.}
$\Lambda \subseteq \Rdst$ and $f,g \in
L^2(\Rdst)$, the bracket product is defined by,
\begin{align*}
[f,g]_\Lambda(x) :=
\sum_{\lambda^\perp \in \Lambda^\perp}
\hat{f}(x+\lambda^\perp) \overline{\hat{g}(x+\lambda^\perp)}
\qquad (x \in \Rdst).
\end{align*}
Here, $\hat{f}(w):=\int_\Rdst f(x) e^{-2\pi i xw} dx$ is the Fourier transform of $f$ and
\begin{align*}
\Lambda^\perp := \set{\lambda^\perp \in \Rdst}{\ip{\lambda}{\lambda^\perp} \in \Zst,
\mbox{ for all }\lambda \in \Lambda}
\end{align*}
is the orthogonal lattice of $\Lambda$. Since the bracket $[f,g]$ is
$\Lambda^\perp$ periodic, it can be considered as a function on the torus
$\Rdst / \Lambda^\perp$.

The $\Lambda$ translates of a finite set of functions $\sett{f_1, \ldots,
f_N}$ form a Riesz sequence in $L^2(\Rdst)$ if an only if the matrix of
functions $\hat{G} \equiv \left( \hat{G}_{n,m} \right)_{1 \leq k,j \leq N}$
given by
\begin{align*}
\hat{G}(x)_{n,m} := [f_n,f_m]_\Lambda (x)
\qquad (x \in \Rdst),
\end{align*}
is uniformly invertible in the sense that all its eigenvalues are bounded
away from 0 and $\infty$, uniformly on $x$ (up to sets of null measure.)

Combining the theory of shift-invariant spaces with Theorem \ref{splines_glue} we get the following.
\begin{theorem}
\label{glue_sis}
Let $\Lambda \subseteq \Rdst$ be a lattice
and let $\splinet = \splinet(F, \Lambda \times \sett{1, \ldots, N})$ be
a finitely-generated shift invariant space where the atoms are given by,
\[F \equiv \sett{f_n(\cdot-\lambda): 1 \leq n \leq N, \lambda \in \Lambda}.\]

Assume the following.
\begin{itemize}
 \item The atoms form a Riesz basis of $\splinet$ and satisfy the following
decay condition,
\begin{equation}
\abs{f_n(x)} \leq C (1+\abs{x})^{-(s+\alpha)}
\qquad (1 \leq n \leq N),
\end{equation}
for some constants $C>0$, $\alpha \geq 0$ and $s>d$.
 \item We have a measurable covering of $\Rdst$, $\partt \equiv \sett{\parti}_{i \in I}$ that is uniformly locally finite (cf. Equation \eqref{loc_finite_partition}.)
 \item We are given a family of measurable functions
\[\set{g_n^i: \Rdst \to \bC}{i\in i, 1\leq n \leq N}\] satisfying the decay condition,
\begin{equation}
 \abs{g_n^i(x)} \leq C' (1+\abs{x})^{-(s+\alpha)}
\qquad (1 \leq n \leq N),
\end{equation}
for some constant $C'>0$ (independent of $i$ and $n$.)
 \item The matrices of functions $\left( \hat{G}^i_{n,m} \right)_{1 \leq n,m \leq N}$
given by
\begin{align*}
 \hat{G^i}(x)_{n,m} := [f_n,g^i_m]_\Lambda (x)
\qquad (x \in \Rdst / \Lambda^\perp),
\end{align*}
are uniformly bounded and invertible in the sense that each $\hat{G^i}(x)$ is invertible and
\begin{align*}
\sup_{x,i} \norm{\hat{G^i}(x)}, \sup_{x,i} \norm{\hat{G^i}(x)^{-1}} < \infty.
\end{align*}
\end{itemize}
Then, for all sufficiently large values of $r>0$, the set
\begin{align*}
 \set{g^i_n(\cdot-\lambda)}{i \in I, 1 \leq n \leq N, \lambda \in \Lambda,
d(\lambda, \parti) \leq r},
\end{align*}
is a Banach frame for $\splinepv$, for all $1 \leq p \leq \infty$ and all
strictly $w_\alpha$-moderated weights $v$. More precisely, if we define the
index set
\begin{align*}
 \Gamma^r :=
\set{(i,n,\lambda) \in I \times \sett{1, \ldots, N} \times
\Lambda}{d(\lambda, E_i) \leq r}
\end{align*}
and the weight $V(i,n,\lambda):=v(\lambda)$ on it, then the analysis map
\begin{align*}
\splinepv &\to \el^p_V(\Gamma^r)
\\
f &\mapsto \left(\ip{f}{g^i_{n,\lambda}}\right)_{(i,n,\lambda)}
\end{align*}
is bounded and left-invertible.
\end{theorem}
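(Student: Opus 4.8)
The plan is to derive the theorem from Theorem \ref{splines_glue}, so the task reduces to checking its hypotheses for the present data. I index the candidate atoms $\sett{g^i_n(\cdot-\lambda)}$ by the set with multiplicity $\Lambda_i := \sett{1,\dots,N}\times\Lambda$, under the map $(n,\lambda)\mapsto\lambda$ (cf. Section \ref{set_mult}); thus $\lambda$ is the node of $g^i_n(\cdot-\lambda)$ and each node set equals $\Lambda$ up to the fixed multiplicity $N$. With this bookkeeping, most hypotheses of Theorem \ref{splines_glue} are immediate. The decay bound on the generators $f_n$ gives, for their lattice translates, the polynomial concentration condition \eqref{spline_frame_loc}; since $\sett{f_n(\cdot-\lambda)}$ is a Riesz basis it is in particular a frame sequence in $\LtRd$, so $\splinet$ is a spline-type space. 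The uniform decay bound on the $g^i_n$ yields the uniform concentration condition \eqref{spline_frames_loc_2}. The node sets satisfy $\rel(\Lambda_i)\leq N\,\rel(\Lambda)$, so \eqref{unif_rel_sep} holds, and the covering $\partt$ is uniformly locally finite by assumption. Note that the $g^i_n$ need not lie in $\splinet$, which is exactly the (exterior) generality allowed by Theorem \ref{splines_glue}.

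The one substantial point is the uniform exterior frame inequality \eqref{spline_frames_unif}: that each $\sett{g^i_n(\cdot-\lambda)}_{n,\lambda}$ is an exterior frame for $\splinet$ with bounds independent of $i$. This is where the invertibility of the bracket matrices $\hat{G^i}$ enters, and I would establish it by the standard fiberization of shift-invariant spaces. Given $f\in\splinet$, expand $f=\sum_{n,\lambda} c_{n,\lambda} f_n(\cdot-\lambda)$ and set $\tau_n(\xi):=\sum_\lambda c_{n,\lambda} e^{-2\pi i \lambda\xi}$, a $\Lambda^\perp$-periodic function. A Poisson-summation computation shows that, for fixed $m$, the sequence $\sett{\ip{f}{g^i_m(\cdot-\mu)}}_{\mu\in\Lambda}$ consists of the Fourier coefficients on $\Rdst/\Lambda^\perp$ of the function $\xi\mapsto\sum_n \tau_n(\xi)\,\hat{G^i}(\xi)_{n,m}$. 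Writing $\tau(\xi)=(\tau_n(\xi))_n$, Parseval on the torus gives
\[
\sum_{n,\mu}\abs{\ip{f}{g^i_n(\cdot-\mu)}}^2 \approx \int_{\Rdst/\Lambda^\perp} \norm{\hat{G^i}(\xi)^{T}\tau(\xi)}^2\,d\xi.
\]
Since transposition preserves singular values, the hypothesis $\sup_{\xi,i}\norm{\hat{G^i}(\xi)}, \sup_{\xi,i}\norm{\hat{G^i}(\xi)^{-1}}<\infty$ yields two-sided bounds $\norm{\hat{G^i}(\xi)^{T}\tau(\xi)}^2\approx\norm{\tau(\xi)}^2$ uniform in $\xi$ and $i$. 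A second use of Parseval and the Riesz basis property give $\int_{\Rdst/\Lambda^\perp}\norm{\tau(\xi)}^2\,d\xi\approx\norm{c}_{\ell^2}^2\approx\norm{f}_2^2$, and combining these produces \eqref{spline_frames_unif} with $A,B$ independent of $i$.

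Having verified all the hypotheses, I would then simply invoke Theorem \ref{splines_glue}, which outputs precisely the claimed conclusion: for all sufficiently large $r$ the indicated family is a Banach frame for $\splinepv$, with the index set $\Gamma^r$ and weight $V$ as stated, and with $r$ uniform over the admissible range of $p$ and $w_\alpha$-moderated weights. The main obstacle throughout is uniformity bookkeeping: one must ensure that every implicit constant in the fiberization estimate --- those coming from the Riesz bounds of $\sett{f_n(\cdot-\lambda)}$ and from the uniform spectral control of the $\hat{G^i}$ --- is genuinely independent of $i$, so that the exterior frame bounds are uniform and Theorem \ref{splines_glue} is truly applicable.
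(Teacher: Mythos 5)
Your proposal is correct and follows essentially the same route as the paper: both reduce the statement to Theorem \ref{splines_glue}, observe that all hypotheses except one are immediate from the decay and separation assumptions, and obtain the only substantive point --- the uniform exterior frame inequality \eqref{spline_frames_unif} --- by fiberizing over the torus $\Rdst/\Lambda^\perp$ and exploiting the uniform invertibility of the cross-Gramian matrices $\hat{G^i}$. The only difference is one of presentation: the paper invokes the fiberization theory of \cite{rosh95}, \cite{dedero94-1}, \cite{bo00-2} as a black box (fiber-wise Riesz bases plus invertible cross-Gramian give a Riesz projection basis with bounds $B^{-1}A'^{-2}$ and $(B')^{2}A^{-1}$), whereas you unwind that machinery into an explicit Poisson-summation/Parseval computation, which is equally valid.
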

\begin{rem}
The theorem is stated for bases just for simplicity. Using the tools from
\cite{rosh95}, \cite{dedero94-1} and \cite{bo00-2} it can be reformulated
for frames.
\end{rem}
\begin{proof}
Let $A$ and $B$ be the Riesz basis bounds of $F$. Also let
$A' := \sup_{x,i} \norm{\hat{G^i}(x)^{-1}}$ and
$B' := \sup_{x,i} \norm{\hat{G^i}(x)}$.
Using the fiberization theory in \cite{rosh95}, \cite{dedero94-1} and
\cite{bo00-2}, for each $x \in \Rdst / \Lambda^\perp$, the system
$\sett{(\hat{f_1}(x+k))_k, \ldots, (\hat{f_N}(x+l))_k}$ is a Riesz basis with
bounds $A$ and $B$ for some subspace $\splinet_x \subseteq
\ell^2(\Lambda^\perp)$. Since its cross-gramian matrix with the system
$\sett{(\hat{g^i_1}(x+k))_k, \ldots, (\hat{g^i_N}(x+k))_k}$ is invertible,
it follows that this latter system is a Riesz projection basis for the
subspace $\splinet_x$ with bounds $B^{-1}A'^{-2}$ and $(B')^2 A^{-1}$.
Invoking again the fiberization theory, it follows that the $\Lambda$
translates of $\sett{g^i_1, \ldots, g^i_N}$ are a projection basis for
$\splinet$ with bounds $\approx$ $B^{-1}A'^{-2}$ and $(B')^2 A^{-1}$
(the implicit constant depends on the volume of the lattice $\Lambda$.) We
can now apply Theorem \ref{splines_glue}.
\end{proof}
\begin{rem}
In \cite{bo00-2} no results for projection bases nor exterior
frames are explicitly given. However, it is proved there (and also in
\cite{dedero94-1})
that the orthogonal projector onto a shift-invariant space operates
fiberwise, so the desired extension follows. For further results on exterior
frames for shift-invariant spaces see \cite{chel04} and \cite{chel05}.
\end{rem}

\subsection{Sampling}
Applying Theorem \ref{splines_glue} to the reproducing kernels of a (smooth)
spline-type space we get the following.
\begin{theorem}
Let $\spline = \spline(F, \Lambda)$ be a spline-type space generated by 
a family of continuous atoms $F \subseteq C^0(\Rdst)$ that satisfy,
\begin{equation*}
\abs{f_k(x)} \leq C \left(1+\abs{x-k}\right)^{-(s+\alpha)},
\quad (x\in\Rdst, k \in \Lambda),
\end{equation*}
for some $s>d$, $C>0$ and $\alpha \geq 0$.

Assume the following.
\begin{itemize}
 \item $\partt \equiv \sett{\parti}_{i\in I}$ is a uniformly locally finite measurable covering of $\Rdst$ (cf. Equation \eqref{loc_finite_partition}).
 \item For each $i \in I$, we have a set $X_i \subseteq \Rdst$ and this collection of sets
is uniformly relatively separated (i.e $\sup_i \rel(X_i) < \infty$.)
 \item For each of the sets $X_i$, the following sampling inequality
\begin{equation}
\label{sampling_i}
A \norm{f}_2^2 \leq \sum_{x \in X_i} \abs{f(x)}^2 \leq B \norm{f}_2^2,
\end{equation}
holds for all $f \in \splinet$ and some constants $0<A\leq B < \infty$
independent of i.
\end{itemize}
For each $r>0$, let
\begin{equation*}
X^r := \sett{(i,x): i \in I, x \in X_i, d(x,\parti) \leq r}.
\end{equation*}
Then, for all sufficiently large $r>0$, there exists constants $0<A^r \leq B^r < \infty$
such that the sampling inequality,
\begin{equation}
\label{sampling_pv}
A^r \norm{f}_{L^p_v} \leq
\left( \sum_{(i,x) \in X^r} \abs{f(x)}^p v(x)^p \right)^{1/p} \leq B^r \norm{f}_{L^p_v},
\end{equation}
holds for all $1 \leq p \leq \infty$ (with the usual adjustment for $p=\infty$), all strictly $w_\alpha$-moderated weights $v$, and all $f \in \splinepv$.
\end{theorem}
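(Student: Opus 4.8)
The plan is to realize the sampling functionals as inner products against reproducing kernels and then to invoke Theorem \ref{splines_glue}, with these kernels playing the role of the analyzing atoms. First I would establish that $\splinet$ is a reproducing kernel Hilbert space of continuous functions. Since the atoms are continuous and satisfy \eqref{spline_frame_loc}, every $f=\sum_k c_k f_k \in \splinet$ (with $c \in \ell^2$) has a continuous representative: by Cauchy--Schwarz and the bound $\sum_k (1+\abs{x-k})^{-2(s+\alpha)} \lesssim 1$ (valid since $2(s+\alpha)>d$), the defining series converges uniformly. Let $G \equiv \sett{g_k}_k$ be the canonical dual family of $F$; by the localization theory of \cite{gr04-1,fogr05} (equivalently, Theorem \ref{quality_theorem} applied to $F$ alone) the dual atoms inherit the decay $\abs{g_k(y)} \lesssim (1+\abs{y-k})^{-(s+\alpha)}$. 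Setting
\[
K_x := \sum_{k \in \Lambda} \overline{f_k(x)}\, g_k \qquad (x \in \Rdst),
\]
the frame expansion $f=\sum_k \ip{f}{g_k} f_k$ gives $\ip{f}{K_x}=f(x)$ for every $f \in \splinet$, so $K_x$ is the reproducing kernel at $x$ and point evaluation is bounded on $\splinet$.

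The key technical step is the decay estimate for the kernel. Combining the decay of $f_k$ and $g_k$ with Lemma \ref{conv_nodes}(c) (applied to the relatively separated set $x-\Lambda$, whose relative separation equals that of $\Lambda$, and using the symmetry of the weight) I would obtain
\[
\abs{K_x(y)} \leq \sum_{k} \abs{f_k(x)}\,\abs{g_k(y)} \lesssim (1+\abs{x-y})^{-(s+\alpha)},
\]
with an implicit constant depending only on $d,s,\alpha,C$ and $\rel(\Lambda)$; in particular it is uniform in $x$. This is exactly the uniform concentration condition \eqref{spline_frames_loc_2} required of the input frames, with the node of $K_x$ taken to be $x$.

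Next I would translate the hypotheses into frame language and check that Theorem \ref{splines_glue} applies. For each $i$, the sampling inequality \eqref{sampling_i} reads $A\norm{f}_2^2 \leq \sum_{x \in X_i} \abs{\ip{f}{K_x}}^2 \leq B\norm{f}_2^2$ for $f \in \splinet$, so $\sett{K_x}_{x \in X_i}$ is a frame for $\splinet$ with bounds $A,B$ independent of $i$ (hence \eqref{spline_frames_unif} holds, with the $K_x$ even lying inside $\splinet$). Its nodes $X_i$ are uniformly relatively separated by assumption (hence \eqref{unif_rel_sep}), the kernels satisfy the uniform concentration just proved, and $\partt$ is a uniformly locally finite covering (hence \eqref{loc_finite_partition}). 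Thus all hypotheses of Theorem \ref{splines_glue} are met, with $\varphi^i_k$ there replaced by $K_x$ for $x \in X_i$ and the index set $\Gamma^r=X^r$ regarded as a set with multiplicity via $(i,x)\mapsto x$ (cf. Section \ref{set_mult}).

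Finally I would read off the conclusion. Theorem \ref{splines_glue} gives, for all sufficiently large $r$ and uniformly over $1 \leq p \leq \infty$ and over any class of strictly $w_\alpha$-moderated weights with bounded moderation constant, that the analysis map $f \mapsto (\ip{f}{K_x})_{(i,x)}=(f(x))_{(i,x)}$ from $\splinepv$ into $\el^p_V(X^r)$ (with $V(i,x)=v(x)$) is bounded and left-invertible. Boundedness is precisely the upper sampling inequality of \eqref{sampling_pv} with $B^r=\norm{C}$, while a bounded left inverse $R$ yields $\norm{f}_{L^p_v} \leq \norm{R}\big(\sum_{(i,x)\in X^r}\abs{f(x)}^p v(x)^p\big)^{1/p}$, i.e. the lower inequality with $A^r=\norm{R}^{-1}$; the case $p=\infty$ carries the usual modifications already built into Theorem \ref{splines_glue}. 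I expect the only genuinely delicate point to be the kernel decay estimate, where the decay of the dual atoms and the subconvolutivity of Lemma \ref{conv_nodes}(c) enter; everything else is bookkeeping around the previously established surgery theorem.
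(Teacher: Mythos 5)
Your proposal is correct and follows essentially the same route as the paper: realize point evaluations as inner products against reproducing kernels, prove the uniform decay of $K_x$ via the polynomially localized dual atoms and Lemma \ref{conv_nodes}(c), and then apply Theorem \ref{splines_glue} with the kernels as the exterior frames indexed by $X^r$ as a set with multiplicity. The only cosmetic difference is that you expand $K_x = \sum_k \overline{f_k(x)}\, g_k$ while the paper writes $K_x = \sum_k \overline{g_k(x)}\, f_k$; both represent the same kernel and yield the same estimate.
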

\begin{rem}
For any class of $w_\alpha$-moderated weights for which the respective constants (cf. Equation \eqref{moderated_weight}) are uniformly bounded, the conclusion of the theorem still holds.
\end{rem}

\begin{proof}
First observe that since $F \subseteq C^0(\Rdst)$, Theorem \ref{spline_norm_equiv} applies with $\Bsp=C^0$ and consequently $\splinepv \subseteq C^0$. The norm equivalence of Theorem \ref{spline_norm_equiv} also implies that $\splinet$ is a reproducing-kernel Hilbert space. We already know that $F$ has a dual frame $G \equiv \sett{g_k}_k$ satisfying a polynomial decay condition,
\begin{align*}
\abs{g_k(x)} \leq C' \left(1+\abs{x-k}\right)^{-(s+\alpha)},
\end{align*}
for some constant $C'>0$. The functional $f \mapsto f(x_0)$ is represented by the function $K_{x_0} \in \splinet$ given by
\begin{equation}
\label{kx0}
K_{x_0} = \sum_{k \in \Lambda} \overline{g_k}(x_0) f_k.
\end{equation}
We will apply Theorem \ref{splines_glue} to the family of frames,
\begin{align*}
\sett{K_x}_{x \in X_i}
\qquad (i \in I).
\end{align*}
To this end, observe that Equation \eqref{sampling_i} implies that this family satisfies the condition on Equation \eqref{spline_frames_unif} of Theorem \ref{splines_glue}. We only need to check the condition on Equation \eqref{spline_frames_loc_2} for the family of reproducing kernels.

For $x \in X_i$, using Equation \eqref{kx0}, we estimate,
\begin{align*}
\abs{K_x} &\leq CC' \sum_{k \in \Lambda} w_{-(s+\alpha)}(x-k) w_{-(s+\alpha)}(\cdot-k).
\end{align*}
Using Lemma \ref{conv_nodes} (c) with $\Gamma := \Lambda - \sett{x}$, it follows that
\begin{align*}
\abs{K_x} &\leq CC'\rel(\Gamma) w_{-(s+\alpha)}(\cdot-x) = K'' \rel(\Lambda) w_{-(s+\alpha)}(\cdot-x).
\end{align*}
Now we can apply Theorem \ref{splines_glue} to obtain the desired conclusion.
\end{proof}

\subsection{Gabor molecules}
Let $\phi:\Rdst \to \Rst$, $\phi(x):=\pi^{-d/4} e^{-\frac{\abs{x}^2}{2}}$ be
the Gaussian normalized in $L^2$. The \emph{Short-Time Fourier Transform}
with respect to $\phi$ of a test function $f \in \SchRd$ is defined by
\begin{equation}
\label{def_stft}
\stft_\phi f(x,w) := \ip{f}{M_w T_x\phi}.
\end{equation}
Here, $T_x$ is the \emph{translation operator} given by
\begin{align*}
T_x(f)(y):=f(y-x),
\end{align*}
and $M_w$ is the \emph{modulation operator} given by
\begin{align*}
M_w(f)(y):=e^{2\pi i w y}f(y).
\end{align*}
The definition in Equation \eqref{def_stft} extends to tempered
distributions. The time-frequency shift $\pi(x,w)$ is defined by $\pi(x,w) :=
M_w T_x$.

For $1 \leq p \leq \infty$ and a weight $v$, the \emph{modulation space}
$M^p_v$ is defined as
\begin{equation*}
 M^p_v := \set{f \in \SchpRd}{\stft_\phi f \in L^p_v(\Rtdst)},
\end{equation*}
and given the norm $\norm{f}_{M^p_v} := \norm{\stft_\phi f}_{L^p_v}$, which
makes it a Banach space
(see \cite[Chapter 11]{gr01}.) $M^0_v$ is similarly defined using
$C^0_v$ instead of $L^\infty_v$.

For an adequate lattice, $\Lambda \subseteq \Rdst \times \Rdst$, the Gabor system
$\sett{M_w T_x \phi: (x,w) \in \Lambda}$ is a frame for $L^2(\Rdst)$.
Consider the family of functions $F \equiv \sett{f_k}_{k\in\Lambda} \subseteq
L^2(\Rdst \times \Rdst)$ defined by $f_k := \stft_\phi (M_w T_x \phi)$, where
$k=(x,w)$. Since $\stft_\varphi: L^2(\Rdst) \to L^2(\Rdst \times \Rdst)$ is
an isometry, it follows that $F$ forms a frame sequence in $L^2(\Rdst \times
\Rdst)$.

Since $\stft_\phi \phi \in \SchRd$ (see \cite[Theorem 11.2.5]{gr01}), for any $s>0$ there exists a constant $C_s>0$ such that
\[
\abs{\stft_\phi \phi(z)} \leq C_s \left( 1 + \abs{z} \right)^{-s}.
\]
Since $\abs{f_k} = \abs{\stft_\phi \phi(\cdot-k)}$ (see \cite[Equation 3.14]{gr01}) it follows that,
\begin{equation}
\label{loc_stft_gauss}
\abs{f_k(z)} \leq C_s \left( 1 + \abs{z-k} \right)^{-s}
\qquad (z \in \Rtdst, k \in \Lambda).
\end{equation}
Consequently, by Example \ref{poly_amalgam}, we know that $\spline = \spline(F, \Lambda)$ is a spline-type space.

Observe that for polynomially moderated weights v and $1 \leq p < \infty$,
$\stft_\phi$ maps, by definition, the modulation space $M^p_v$ isometrically
onto $\splinepv$. For $p=\infty$, the same statement is true replacing
$M^\infty_v$ for $M^0_v$.

In view of this, Theorem \ref{gluying_frames} can be reformulated for Gabor molecules.
\begin{theorem}
\label{gluying_gabor_molecules}
\mbox{}

\begin{itemize}
\item Let $\partt \equiv \sett{\parti}_{i\in I}$ be a uniformly locally finite measurable covering of $\Rdst \times \Rdst$ (cf. Equation \eqref{loc_finite_partition}.)
\item For each $i \in I$, let $G^i \equiv \sett{g^i_k}_{k\in\Lambda_i}$ be a frame for $L^2(\Rdst)$ with lower bound $A_i$ and suppose that $A:=\inf_i A_i >0$.
\item Suppose that the sets of time-frequency nodes $\Lambda_i \subseteq \Rdst \times \Rdst$ are uniformly relatively separated (i.e $\sup_{i \in I} \rel (\Lambda_i) < \infty$.)
\item Assume that the molecules $G^i$ satisfy the following uniform time-frequency concentration condition,
\begin{equation}
\label{tf_loc}
\abs{\stft_\phi g^i_k(z)} \leq C \left(1+\abs{z-k}\right)^{-(s+\alpha)}
\qquad (z \in \Rdst \times \Rdst, k \in\Lambda_i),
\end{equation}
for some constants $C>0$, $s>2d$ and $\alpha \geq 0$ (independent of i.)
\end{itemize}
Then, for all sufficiently large $r>0$, the system
\[
\sett{g^i_k: i \in I, k \in \Lambda_i, d(k,\parti) \leq r}
\]
is a Banach frame simultaneously for all the modulation spaces $M^p_v$,
for all strictly $w_\alpha$-moderated weights v and $1 \leq p < \infty$.
The same is true for $p=\infty$, replacing $M^\infty_v$ for $M^0_v$.

More precisely, if we set,
\begin{equation*}
\Gamma^r := \sett{(i,k): i \in I, k \in \Lambda_i, d(k,\parti) \leq r},
\end{equation*}
and define a weight $V$ on $\Gamma^r$ by
\[
V(k,i):=v(k), 
\]
then, the coefficients map given by
\begin{align*}
M^p_v &\to \el^p_V(\Gamma^r)
\\
f &\mapsto \left( \ip{f}{g^i_k} \right)_{(i,k)}
\end{align*}
is bounded and left-invertible, for all sufficiently large values of $r$.
\end{theorem}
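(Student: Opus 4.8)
The plan is to transfer the entire problem, via the STFT, to the spline-type space $\spline = \spline(F, \Lambda)$ constructed above, where $F = \sett{\stft_\phi(\pi(k)\phi)}_{k \in \Lambda}$, and then invoke Theorem \ref{splines_glue}. The bridge is the orthogonality relation for the STFT (Moyal's formula; see \cite[Chapter 3]{gr01}): since $\phi$ is normalized in $L^2$, for all $f, g \in L^2(\Rdst)$ one has $\ip{f}{g} = \ip{\stft_\phi f}{\stft_\phi g}$, the latter taken in $L^2(\Rtdst)$. Setting $\varphi^i_k := \stft_\phi g^i_k$, the coefficients in the theorem therefore factor as
\begin{equation*}
\ip{f}{g^i_k} = \ip{\stft_\phi f}{\varphi^i_k},
\qquad (i \in I, \, k \in \Lambda_i),
\end{equation*}
so that the coefficient map on $M^p_v$ is $\stft_\phi$ followed by the spline-type analysis map attached to the family $\sett{\varphi^i_k}$. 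As recorded above, $\stft_\phi$ is an isometric isomorphism of $M^p_v$ onto $\splinepv$ for $1 \leq p < \infty$ (and of $M^0_v$ onto $\splinepv$ at the endpoint $p=\infty$); hence it suffices to prove that this spline-type analysis map is bounded and left-invertible, which is exactly the output of Theorem \ref{splines_glue}.

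Next I would confirm that the hypotheses of Theorem \ref{splines_glue} hold, the crucial bookkeeping point being that the ambient space is now $\Rtdst$, so the dimension there is $2d$ -- this is the source of the requirement $s > 2d$. The atoms $F = \sett{\stft_\phi(\pi(k)\phi)}_k$ satisfy \eqref{spline_frame_loc} with exponent $s+\alpha$: by \eqref{loc_stft_gauss} the Gaussian STFT decays faster than any polynomial, so one chooses the constant $C_{s+\alpha}$ accordingly. It is worth noting that the phase twist that distinguishes $\stft_\phi(\pi(k)\phi)$ from a genuine translate of $\stft_\phi\phi$ is immaterial here, since only the modulus enters the concentration condition; this is precisely why general spline-type spaces, rather than shift-invariant spaces, are the correct model. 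The concentration condition \eqref{spline_frames_loc_2} for the $\varphi^i_k$ is literally the hypothesis \eqref{tf_loc}, the node sets $\Lambda_i$ are uniformly relatively separated, and $\partt$ is uniformly locally finite, all by assumption.

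The one item deserving a short argument is the uniform exterior frame inequality \eqref{spline_frames_unif}. Since $\sett{\pi(k)\phi}_{k\in\Lambda}$ is a Gabor frame for $L^2(\Rdst)$, its closed span is all of $L^2(\Rdst)$, and applying the isometry $\stft_\phi$ shows $\splinet = \stft_\phi(L^2(\Rdst))$. Thus every $f \in \splinet$ has the form $f = \stft_\phi h$ with $\norm{f}_2 = \norm{h}_2$, and Moyal's formula gives
\begin{equation*}
\sum_k \abs{\ip{f}{\varphi^i_k}}^2 = \sum_k \abs{\ip{h}{g^i_k}}^2.
\end{equation*}
The frame hypothesis on $G^i$ with uniform lower bound $A = \inf_i A_i > 0$ supplies the uniform lower exterior bound, while the uniform upper bound is automatic from \eqref{tf_loc} via Example \ref{poly_amalgam} and Proposition \ref{analysis} (cf. the footnote to Theorem \ref{splines_glue}). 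Note that the $\varphi^i_k$ need not lie in $\splinet$, so these are genuinely \emph{exterior} frames -- exactly the generality of Theorem \ref{splines_glue}.

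With every hypothesis in place, Theorem \ref{splines_glue} provides, for all sufficiently large $r$, a bounded and left-invertible analysis map $\splinepv \to \el^p_V(\Gamma^r)$, with $r$ selectable uniformly over $p$ and over the admissible classes of $w_\alpha$-moderated weights (strict moderation gives constant $1$, hence uniform bounds). Composing with $\stft_\phi$ transfers this conclusion verbatim to the coefficient map on $M^p_v$, and on $M^0_v$ at the endpoint $p=\infty$, yielding the claimed Banach frame property. I expect the only friction to be organizational: carrying the dimension $2d$ through, checking that the index set $\Gamma^r$ and the weight $V(k,i) := v(k)$ coincide with those of Theorem \ref{splines_glue} under the identification $\Lambda_i \subseteq \Rtdst$, and treating the $p=\infty$ endpoint through the $M^0_v$ substitution. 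No analytic estimate beyond the orthogonality relation is required, since the substantive work is already contained in Theorem \ref{splines_glue}.
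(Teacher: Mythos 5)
Your proposal is correct and follows essentially the same route as the paper's own proof: transfer to the spline-type space $\stft_\phi(L^2(\Rdst))$ via the isometry $\stft_\phi$, set $\varphi^i_k := \stft_\phi(g^i_k)$, verify the concentration and uniform frame-bound hypotheses, and invoke Theorem \ref{splines_glue} together with the surjective isometry $\stft_\phi: M^p_v \to \splinepv$. Your version merely spells out some details (Moyal's formula for the frame-bound transfer, the identification of $\splinet$ with $\stft_\phi(L^2(\Rdst))$, and the $2d$ dimension bookkeeping) that the paper leaves implicit.
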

\begin{rem}
For any class of $w_\alpha$-moderated weights for which the respective constants (cf. Equation \eqref{moderated_weight}) are uniformly bounded, it is also possible to choose a value of $r>0$ for which the conclusion of the theorem holds.
\end{rem}
\begin{proof}
Consider the spline-type space $\splinet = \stft_\phi(L^2(\Rdst))$ from the
discussion above. Define the functions,
\begin{align*}
\varphi^i_k := \stft_\phi(g^i_k)
\qquad (i \in I, k \in \Lambda_i).
\end{align*}
Since $\stft_\varphi: L^2(\Rdst) \to L^2(\Rdst \times \Rdst)$ is an isometry, each of the families $\sett{\varphi^i_k}_k$ is a frame for $\splinet$ with lower bound $A$. Moreover, Equation \eqref{tf_loc} implies that these families share a uniform polynomial concentration condition. This condition is also shared by the atoms $\sett{f_k}_k$ because Equation \eqref{loc_stft_gauss} holds for any value of $s>0$. The theorem now follows from Theorem \ref{splines_glue} and the fact that
$:\stft_\phi:M^p_v \to \splinepv$ is a surjective isometry (with the discussed modification for $p=\infty$.)
\end{proof}
\begin{rem}
\label{smoothness_stft}
Observe that since we have identified the range of the STFT (with a fixed window) with a spline-type space, we get from Theorem \ref{spline_norm_equiv} that, on the range of the STFT, the $L^p_v$ and $W(L^\infty, L^p_v)$ norms are equivalent (the class of weights $v$ for which this is true depends on the time-frequency localization of the window function; in the case of the Gaussian window, any polynomial weight $w_\alpha$ with $\alpha \geq 0$ will work.) Results of this kind can be found in Chapter 12 of \cite{gr01}, see for example Proposition 12.1.11 there.
\end{rem}
\begin{rem}
Finally observe that the argument given can be used to combine not only time-frequency concentrated frames for $L^2(\Rdst)$ but also frames for \emph{proper subspaces} $S \subseteq L^2(\Rdst)$. Simply let $\splinet=\stft_\phi(S)$ and apply the same argument as above.
\end{rem}
For completeness, we give a version of Theorem \ref{gluying_gabor_molecules} for pure time-frequency atoms. This gives general sufficient conditions for the existence of the so called \emph{quilted Gabor frames}, recently introduced in \cite{dofe09}.
\begin{corollary}
\mbox{}

\begin{itemize}
\item Let $\partt \equiv \sett{\parti}_{i\in I}$ be a uniformly locally finite measurable covering of $\Rdst \times \Rdst$ (cf. Equation \eqref{loc_finite_partition}.)
\item For each $i \in I$, let $G^i \equiv \sett{T_j M_k g^i: (k,j) \in \Lambda_i}$ be a Gabor frame for $L^2(\Rdst)$ with lower bound $A_i$ and suppose that $A:=\inf_i A_i >0$.
\item Suppose that the sets of time-frequency nodes $\Lambda_i \subseteq \Rdst \times \Rdst$ are uniformly relatively separated.
\item Assume that the windows $\sett{g^i}_i$ satisfy the following uniform time-frequency concentration condition,
\begin{equation*}
C := \sup_i \norm{g^i}_{M^\infty_{w_{s+\alpha}}} < + \infty,
\end{equation*}
for some constants $s>2d$ and $\alpha \geq 0$ (independent of i).
\end{itemize}
Then, for all sufficiently large $r>0$, the system
\[
\sett{T_j M_k g^i: i \in I, (k,j) \in \Lambda_i, d((k,j),\parti) \leq r}
\]
is a Banach frame simultaneously for all the modulation spaces $M^p_v$,
for all strictly $w_\alpha$-moderated weights v and $1 \leq p < \infty$.
The same is true for $p=\infty$, replacing $M^\infty_v$ for $M^0_v$.
\end{corollary}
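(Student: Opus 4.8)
The plan is to specialize Theorem \ref{gluying_gabor_molecules} to the case where each frame $G^i$ consists of pure time-frequency shifts of a single window, deducing the pointwise concentration of the STFT required by that theorem from the modulation-space bound imposed on the windows $g^i$. First I would regard, for each $i \in I$, the family $\sett{T_j M_k g^i: (k,j) \in \Lambda_i}$ as a frame for $L^2(\Rdst)$ and identify the atom $T_j M_k g^i$ with its time-frequency node in $\Rdst \times \Rdst$. The hypotheses of Theorem \ref{gluying_gabor_molecules} that concern the index geometry — that $G^i$ is a frame with lower bound $A_i$ and $\inf_i A_i>0$, that the nodes $\Lambda_i$ are uniformly relatively separated, and that $\partt$ is a uniformly locally finite covering — are assumed verbatim and pass unchanged to the molecular setting.

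The key step is to verify the uniform time-frequency concentration condition \eqref{tf_loc}. Here I would invoke the covariance of the STFT (see \cite[Chapter 3]{gr01}): a direct computation gives the modulus identity
\[
\abs{\stft_\phi(T_j M_k g^i)(z)} = \abs{\stft_\phi g^i(z-(j,k))},
\qquad z \in \Rdst \times \Rdst .
\]
On the other hand, unwinding the definition of the modulation-space norm yields, since $\stft_\phi g^i$ is continuous, the everywhere bound
\[
\abs{\stft_\phi g^i(z)} \leq \norm{g^i}_{M^\infty_{w_{s+\alpha}}}\, w_{-(s+\alpha)}(z) \leq C\,(1+\abs{z})^{-(s+\alpha)} .
\]
Combining the two displays, with the node of $T_j M_k g^i$ taken to be its phase-space center, gives exactly
\[
\abs{\stft_\phi(T_j M_k g^i)(z)} \leq C\,(1+\abs{z-(j,k)})^{-(s+\alpha)},
\]
which is Equation \eqref{tf_loc}; note that the hypothesis $s>2d$ is the correct one, since the ambient dimension of the phase space is $2d$. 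With \eqref{tf_loc} in hand, Theorem \ref{gluying_gabor_molecules} applies directly and delivers the claimed Banach frame property, simultaneously for all $M^p_v$ (and for $M^0_v$ when $p=\infty$) and all strictly $w_\alpha$-moderated weights $v$.

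I expect the only delicate point — and the nearest thing to an obstacle — to be the bookkeeping of the phase-space labeling: the covariance formula places the concentration of $T_j M_k g^i$ at the point $(j,k)$, whereas the statement indexes the node by $(k,j)$. Since the interchange of the time and frequency coordinates is a linear isometry of $\Rdst \times \Rdst$, it leaves invariant every quantity that enters the hypotheses and the conclusion — the relative separation of the $\Lambda_i$, the covering $\partt$, and the distances $d(\,\cdot\,,\parti)$. Consequently the two conventions are interchangeable, the concentration estimate is unaffected, and the reduction to Theorem \ref{gluying_gabor_molecules} goes through, completing the argument.
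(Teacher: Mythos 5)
Your proposal is correct and is essentially the paper's own proof: the paper likewise verifies the concentration condition \eqref{tf_loc} by combining the covariance identity $\abs{\stft_\phi(T_j M_k g^i)} = \abs{\stft_\phi(g^i)(\cdot-\lambda)}$ with the pointwise bound $\abs{\stft_\phi g^i} \leq \norm{g^i}_{M^\infty_{w_{s+\alpha}}} w_{-(s+\alpha)}$, and then invokes Theorem \ref{gluying_gabor_molecules}. Your attention to the $(k,j)$ versus $(j,k)$ ordering addresses a point the paper silently elides (its proof simply sets $\lambda := (j,k) \in \Lambda_i$ despite the statement indexing nodes as $(k,j)$), and your coordinate-swap isometry argument resolves it correctly.
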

\begin{proof}
Observe that,
\begin{align}
\abs{\stft_\phi(T_j M_k g^i)} = \abs{\stft_\phi(g^i)(\cdot-\lambda)}
\leq C w_{-(s+\alpha)} (z-\lambda),
\end{align}
where $\lambda:=(j,k) \in \Lambda_i$. Therefore, we can apply Theorem \ref{gluying_gabor_molecules}.
\end{proof}
\subsection{Gabor multipliers}
Now we give an application of the frame surgery scheme to Gabor multipliers. We follow largely the approach in \cite{fe02}. For a general background on Gabor multipliers see \cite[Chapter 5]{fest03}.

Given a lattice in the time-frequency plane $\Lambda \subseteq \Rtdst$ and
two families of functions $F \equiv \sett{f_1, \ldots, f_N}, G \equiv
\sett{g_1, \ldots, g_N} \subseteq L^2(\Rdst)$ we consider the class of
operators,
\begin{align*}
\gabt_{F,G} :=
\set{
\sum_{n=1}^N \sum_{\lambda \in \Lambda}
m_n(\lambda) \ip{\dash}{\pi(\lambda)g_n} \pi(\lambda)f_n
}
{
m_n \in \ell^2(\Lambda)
},
\end{align*}
where $\pi(\lambda)$ is the time-frequency shift $\pi(\lambda) := M_w T_x$, if $\lambda=(x,w)$. The convergence of the series defining the class $\gabt$ requires additional assumptions (see below.) The operators in this class are called the \emph{Gabor multipliers} associated with the time-frequency atoms $(F,G)$ and the lattice $\Lambda$.

For $f,g \in L^2(\Rdst)$ we use the notation $P_{f,g} := \ip{\dash}{g}f$ for the corresponding rank-one operator. Furthermore, for a point $(x,w) \in \Rdst \times \Rdst$ we let the time-frequency shifts act on an operator $T$ by
\begin{align*}
 \rho(x,w) (T) := M_w T_x T T_{-x} M_{-w} = \pi(x,w) T \pi(x,w)^*.
\end{align*}

Every linear operator $T$ mapping continuously $\SchRd$ into $\SchpRd$ admits
a distributional kernel $K(T) \in \SchpRtd$. The \emph{Kohn-Nirenberg symbol}
of $T$ is defined in terms of $K$ by
\begin{align*}
\sigma(T)(x,w) := \int_\Rdst K(T)(x,x-s) e^{-2 \pi i sw} ds.
\end{align*}
From this definition it follows that the Kohn-Nirenberg map defines and
isometry between the class of Hilbert-Schmidt operators and $L^2(\Rtdst)$.
The important property for us is that the Kohn-Nirenberg map interwines the
action $\rho$ with the regular action of $\Rdst \times \Rdst$ (by
translations.) That is, 
\begin{align*}
\sigma(\rho(z)T) = \sigma(T)(\cdot -z)
\qquad ( z \in \Rdst \times \Rdst). 
\end{align*}
We see then that the Kohn Nirenberg map $KN: T \mapsto \sigma(T)$ relates
the class $\gabt$ to a shift-invariant space $\splinet(F,G) :=
KN(\gabt_{F,G})$ given by,
\begin{align*}
\splinet = \set{
\sum_{n=1}^N \sum_{\lambda \in \Lambda}
m_n(\lambda) \sigma(P_{f_n, g_n})(\cdot - \lambda)
}
{
(m_n) \in \ell^2(\Lambda)
}.
\end{align*}
The Kohn-Nirenberg symbol of the projector $P_{f,g}$ is explicitly given by,
\begin{align}
\label{KN_P}
\sigma(P_{f,g})(x,w) = f(x) \overline{\hat{g}}(w) e^{-2 \pi i xw},
\end{align}
so its 2d Fourier transform is
\begin{align*}
\widehat{\sigma(P_{f,g})}(x,w) = \stft_g f (-w,x).
\end{align*}
Consequently, the inner product between the building blocks of $\splinet$
is given by,
\begin{align*}
\ip{\sigma(P_{f_n, g_n})}{\sigma(P_{f_m, g_m})} = \ip{\stft_{g_n} f_n}{\stft_{g_m} f_m},
\end{align*}
whereas, with the notation $z^*=(-w,x)$ for $z=(x,w)$,
their bracket product (see Section \ref{sis_sec}) is given by
\begin{align}
\label{gabbr}
[\sigma(P_{f_n, g_n}),\sigma(P_{f_m, g_m})]_{\Lambda} (z)
= \sum_{\lambda^\perp \in \Lambda^\perp}
\stft_{g_n} f_n(z^*-\lambda^\perp)
\overline{\stft_{g_m} f_m(z^*-\lambda^\perp)}.
\end{align}
Hence, the theory of shift-invariant spaces (see \cite{rosh95}, \cite{dedero94-1} and \cite{bo00-2}) implies the following.
\begin{prop}
\label{gabmult_prop}
The set $\set{\ip{\dash}{\pi(\lambda)g_n} \pi(\lambda)f_n}{\lambda \in \Lambda}$
is a Riesz sequence in the space of Hilbert-Schmidt operators if and only if
the matrix of functions $\hat{G} = \hat{G}(F,G) \equiv (\hat{G}_{n,m})_{1 \leq n,m \leq N}$, given by,
\begin{align}
\label{G_fg}
 \hat{G}_{n,m}(z) = \sum_{\lambda^\perp \in \Lambda^\perp}
\stft_{g_n} f_n(z-\lambda^\perp)
\overline{\stft_{g_m} f_m(z-\lambda^\perp)},
\end{align}
is uniformly bounded and invertible (that is, its eigenvalues are bounded away from 0 and $\infty$, uniformly on $z$.)
\end{prop}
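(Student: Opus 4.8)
The plan is to transport the question to the range of the Kohn-Nirenberg map, where the system becomes a set of lattice translates, and then quote the fiberization characterization of Riesz sequences of translates recalled in Section \ref{sis_sec}. First I would rewrite the operators in the statement using the action $\rho$. A direct computation gives, for every $\lambda \in \Lambda$ and $1 \leq n \leq N$,
\[
\rho(\lambda)(P_{f_n,g_n}) = \pi(\lambda) P_{f_n,g_n} \pi(\lambda)^* = \ip{\dash}{\pi(\lambda)g_n}\pi(\lambda)f_n,
\]
so the family in the statement is exactly $\sett{\rho(\lambda)(P_{f_n,g_n})}_{\lambda \in \Lambda,\, 1 \leq n \leq N}$. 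Since the Kohn-Nirenberg map $\sigma$ is an isometry from the Hilbert-Schmidt class onto $L^2(\Rtdst)$, it carries Riesz sequences to Riesz sequences with the same bounds; and by the intertwining property $\sigma(\rho(z)T) = \sigma(T)(\cdot - z)$, the image of our system is precisely the collection of $\Lambda$-translates $\sett{\sigma(P_{f_n,g_n})(\cdot-\lambda)}$ of the finite family $\sett{\sigma(P_{f_n,g_n})}_{n=1}^N \subseteq L^2(\Rtdst)$. Hence the original operators form a Riesz sequence in the Hilbert-Schmidt space if and only if these translates form a Riesz sequence in $L^2(\Rtdst)$.

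Next I would invoke the criterion from Section \ref{sis_sec}: the $\Lambda$-translates of a finite family form a Riesz sequence exactly when the bracket Gram matrix $\big([\sigma(P_{f_n,g_n}),\sigma(P_{f_m,g_m})]_\Lambda\big)_{n,m}$ is uniformly invertible, i.e.\ its eigenvalues are bounded away from $0$ and $\infty$ uniformly in $z$ (off a null set). It then remains to identify this matrix with the matrix $\hat{G}$ of Equation \eqref{G_fg}. This is precisely where the computation leading to Equation \eqref{gabbr} enters: writing $z \mapsto z^* = (-w,x)$ for the ninety-degree rotation of the time-frequency plane, that identity yields
\[
[\sigma(P_{f_n,g_n}),\sigma(P_{f_m,g_m})]_\Lambda(z) = \hat{G}_{n,m}(z^*).
\]
Because $z \mapsto z^*$ is a measure-preserving bijection of $\Rtdst$ that descends to a bijection of the torus $\Rtdst/\Lambda^\perp$, uniform boundedness and uniform invertibility of the bracket Gram matrix are equivalent to the corresponding properties for $\hat{G}$ itself, which is the content of the proposition.

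The only delicate point is this last identification through the twist $z \mapsto z^*$: one must be sure that the rotation respects the lattice structure, so that the phrase "uniformly in $z$, up to null sets'' is genuinely transferred from the bracket Gram matrix to $\hat{G}$. This is exactly the fact $(\Lambda^\perp)^* = \Lambda^\perp$ already used in deriving Equation \eqref{gabbr}; combined with the measure-invariance of $z \mapsto z^*$, it makes the two uniform-invertibility conditions literally equivalent. Everything else is routine bookkeeping with the isometry and intertwining properties of the Kohn-Nirenberg map established above.
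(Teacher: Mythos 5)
Your argument retraces the paper's proof step for step: the rewriting of the system as $\sett{\rho(\lambda)(P_{f_n,g_n})}$, the transport through the Kohn--Nirenberg isometry using the intertwining property $\sigma(\rho(z)T)=\sigma(T)(\cdot-z)$, the bracket/fiberization criterion from Section \ref{sis_sec}, the identification of the Gram matrix of brackets with $\hat{G}(z^*)$ via Equation \eqref{gabbr}, and finally the absorption of the rotation $z\mapsto z^*$. The paper compresses that last step into the single remark that the uniform-invertibility condition is quantified over \emph{every} $z\in\Rtdst$, so the change of coordinates can be dropped; your version of this step is the same in substance.

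The part that needs correcting is your final paragraph. The identity $(\Lambda^\perp)^*=\Lambda^\perp$, which you assert as a fact, is false for a general lattice: for $\Lambda=a\Zst^d\times b\Zst^d$ with $a\neq b$ one has $\Lambda^\perp=\tfrac{1}{a}\Zst^d\times\tfrac{1}{b}\Zst^d$ while $(\Lambda^\perp)^*=\tfrac{1}{b}\Zst^d\times\tfrac{1}{a}\Zst^d$. Moreover, the transfer step you attach it to does not actually need it: once \eqref{gabbr} is granted, ``the bracket Gram matrix is uniformly invertible in $z$'' and ``$\hat{G}$ is uniformly invertible'' are equivalent simply because $z\mapsto z^*$ is a measure-preserving linear bijection of $\Rtdst$ and both conditions quantify over all of $\Rtdst$ rather than over a fundamental domain; no compatibility between the rotation and the lattice is required there (in general $*$ descends to a bijection $\Rtdst/\Lambda^\perp\to\Rtdst/(\Lambda^\perp)^*$ between two \emph{different} tori, not to a self-map of $\Rtdst/\Lambda^\perp$ as you claim). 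Where the identity genuinely intervenes is one step earlier, inside the derivation of \eqref{gabbr} itself: rotating the bracket $\sum_{\lambda^\perp\in\Lambda^\perp}\hat\varphi(z+\lambda^\perp)\overline{\hat\psi(z+\lambda^\perp)}$ produces a sum over the rotated (adjoint) lattice $(\Lambda^\perp)^*$, and rewriting it as a sum over $\Lambda^\perp$, as \eqref{gabbr} and \eqref{G_fg} do, silently uses $(\Lambda^\perp)^*=\Lambda^\perp$. So you have in fact put your finger on a hidden hypothesis shared by the paper --- the statement as written is correct for rotation-invariant lattices such as $\Lambda=\alpha\Zst^{2d}$, and in general $\Lambda^\perp$ in \eqref{G_fg} should be replaced by the adjoint lattice $(\Lambda^\perp)^*$ --- but by asserting the identity as true rather than flagging it, your write-up inherits the same unstated restriction as the paper's Equation \eqref{gabbr}.
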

\begin{rem}
Observe that, for time-frequency concentrated windows,
since by Remark \ref{smoothness_stft} the STFT of an $L^2$ function belongs
to the amalgam space $W(C_0,L^2)$, it follows that the entries of the matrix
in Equation \eqref{G_fg} are continuous periodic functions. Therefore, that
matrix will be uniformly invertible if it is invertible at every point.
\end{rem}
\begin{proof}
The only observation to complete the proof is that, since the condition in Equation \eqref{G_fg} is required for every $z \in \Rtdst$, we can drop the change of coordinates $z \mapsto z^*$ in the bracket product.
\end{proof}
Consequently, in the situation of Proposition \ref{gabmult_prop}, any
operator $T \in \gabt(F,G)$ can be stably recovered from its \emph{lower
symbol}
\begin{align*}
\left( 
\ip{T}{P_{\pi(\lambda)f_n, \pi(\lambda)g_n}}_{HS}
: \lambda \in \Lambda
\right),
\end{align*}
where $\ip{\cdot}{\cdot}_{HS}$ denotes the Hilbert-Schmidt inner product.
We can now reformulate Theorem \ref{glue_sis} in this context.
\begin{theorem}
\label{glue_gab_mult}
Let a lattice $\Lambda \subseteq \Rtdst$ and a uniformly locally finite
measurable covering of the time-frequency plane $\partt \equiv
\sett{\parti}_{i \in I}$ be given.

Let $f_1, \ldots, f_N, g_1, \ldots g_N \in L^2(\Rdst)$ be such that the matrix
$\hat{G}(F,G)$ on Equation \eqref{G_fg} is uniformly invertible and suppose
that these atoms satisfy,
\begin{align}
\label{gm_decay}
 \abs{f_n(x)} &\leq C (1+\abs{x})^{-s},
\\
 \abs{\hat{g}_n(w)} &\leq C (1+\abs{w})^{-s},
\end{align}
for some constants $C>0$ and $s>d$.

Let families $\sett{f^i_1, \ldots, f^i_N}, \sett{g^i_1, \ldots g^i_N} \subseteq L^2(\Rdst)$, $i \in I$
be given. Assume the following.
\begin{itemize}
 \item The given families satisfy,
\begin{align}
\label{gm_decay_2}
 \abs{f^i_n(x)} &\leq C' (1+\abs{x})^{-s},
\\
 \abs{\hat{g}^i_n(w)} &\leq C' (1+\abs{w})^{-s},
\end{align}
for some constant $C'>0$ (independent of $i$ and $n$.)
 \item The matrices of functions $\left( \hat{G}^i_{n,m} \right)_{1 \leq n,m \leq N}$
given by
\begin{align*}
 \hat{G^i}(z)_{n,m} :=
\sum_{\lambda^\perp \in \Lambda^\perp}
\stft_{g_n} f_n(z-\lambda^\perp)
\overline{\stft_{g^i_m} f^i_m(z-\lambda^\perp)}
\qquad (z \in \Rdst \times \Rdst).
\end{align*}
are uniformly bounded and invertible in the sense that each $\hat{G^i}(z)$ is invertible and
\begin{align*}
\sup_{z,i} \norm{\hat{G^i}(z)}, \sup_{z,i} \norm{\hat{G^i}(z)^{-1}} < \infty.
\end{align*}
\end{itemize}
Then, for all sufficiently large values of $r>0$, any Gabor multiplier
$T \in \gabt(F,G)$ can be stably recovered in Hilbert-Schmidt norm from its \emph{mixed lower symbol}
\begin{align}
\label{mixed_lower_symbol}
\left(
\ip{T}{P_{\pi(\lambda)f^i_n, \pi(\lambda)g^i_n}}_{HS}
: i \in I, 1\leq n \leq N,\lambda \in \Lambda, d(\lambda, \parti) \leq r
\right).
\end{align}
\end{theorem}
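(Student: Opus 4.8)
The plan is to transport the statement across the Kohn--Nirenberg map and then invoke Theorem \ref{glue_sis}. Recall that $KN$ is an isometry from the Hilbert--Schmidt operators onto $L^2(\Rtdst)$ and that it intertwines the conjugation action $\rho$ with translations, $\sigma(\rho(z)T)=\sigma(T)(\cdot-z)$. Since $P_{\pi(\lambda)f^i_n,\pi(\lambda)g^i_n}=\rho(\lambda)P_{f^i_n,g^i_n}$, the isometry and the intertwining relation give
\[
\ip{T}{P_{\pi(\lambda)f^i_n,\pi(\lambda)g^i_n}}_{HS}
=\ip{\sigma(T)}{\sigma(P_{f^i_n,g^i_n})(\cdot-\lambda)}.
\]
First I would record this identity, which shows that stably recovering $T$ in Hilbert--Schmidt norm from its mixed lower symbol is equivalent to stably recovering $\sigma(T)\in\splinet(F,G)$ from its inner products against the lattice translates $\sigma(P_{f^i_n,g^i_n})(\cdot-\lambda)$, over those $(i,n,\lambda)$ with $d(\lambda,E_i)\le r$.

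Next I would verify that the data match the hypotheses of Theorem \ref{glue_sis} for the finitely generated shift-invariant space $\splinet=\splinet(F,G)$, with $N$ generators $\sigma(P_{f_n,g_n})$ and lattice $\Lambda$. The assumed uniform invertibility of $\hat{G}(F,G)$ is, by Proposition \ref{gabmult_prop}, exactly the statement that the generating translates form a Riesz basis of $\splinet$. The role of the surgery windows $g^i_n$ in Theorem \ref{glue_sis} is played by the symbols $\sigma(P_{f^i_n,g^i_n})$, and the cross-bracket matrices $\hat{G^i}$ of the hypothesis coincide, up to the harmless coordinate change $z\mapsto z^*$ noted in Proposition \ref{gabmult_prop}, with the bracket matrices $[\sigma(P_{f_n,g_n}),\sigma(P_{f^i_m,g^i_m})]_\Lambda$ whose uniform invertibility Theorem \ref{glue_sis} requires. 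The covering $\partt$ and its local finiteness carry over unchanged.

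The point needing the most care is the decay hypothesis, because of a dimension mismatch. By \eqref{KN_P} we have $\abs{\sigma(P_{f_n,g_n})(x,w)}=\abs{f_n(x)}\,\abs{\hat{g}_n(w)}$, so \eqref{gm_decay} yields the tensor-product bound $\abs{\sigma(P_{f_n,g_n})(x,w)}\le C^2(1+\abs{x})^{-s}(1+\abs{w})^{-s}$, and likewise for the surgery symbols from \eqref{gm_decay_2}. By itself this is only radial decay of order $s$ on $\Rtdst$, whereas a literal application of the surgery machinery in dimension $2d$ would require order exceeding $2d$. The resolution, which I would carry out explicitly, is that the product structure is what matters: since $w_{-s}$ is subconvolutive on $\Rdst$ for $s>d$ (Lemma \ref{conv_bound}), the tensor weight $(x,w)\mapsto w_{-s}(x)\,w_{-s}(w)$ is subconvolutive on $\Rtdst$, and the analogue of Lemma \ref{conv_nodes} holds for it over every relatively separated subset of $\Rtdst$ (cover $\Rtdst$ by unit cubes and use relative separation in each of the two $d$-dimensional blocks). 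Consequently the families of lattice translates of $\sigma(P_{f_n,g_n})$ and of $\sigma(P_{f^i_n,g^i_n})$ have finite $W(L^\infty,L^1)$ norm, and the proof of Theorem \ref{glue_sis} (through Theorem \ref{splines_glue}) goes through with the radial weight replaced by this tensor weight. The hard part will be this verification: confirming that the surgery estimates survive for tensor-type concentration of exponent $s>d$ in place of radial concentration of exponent $s>2d$.

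With the hypotheses in place, Theorem \ref{glue_sis} yields, for all sufficiently large $r>0$, that the analysis map $f\mapsto\left(\ip{f}{\sigma(P_{f^i_n,g^i_n})(\cdot-\lambda)}\right)_{(i,n,\lambda)}$ is bounded and left-invertible on $\splinet(F,G)$. Transporting this back through the $KN$ isometry and using the identity of the first paragraph, the left-invertibility becomes precisely the stable recoverability in Hilbert--Schmidt norm of every $T\in\gabt(F,G)$ from its mixed lower symbol, as claimed.
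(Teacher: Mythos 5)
Your route is the paper's route: transport the statement through the Kohn--Nirenberg isometry using $P_{\pi(\lambda)f,\pi(\lambda)g}=\rho(\lambda)P_{f,g}$ and the intertwining property, match the Riesz-basis and cross-bracket hypotheses of Theorem \ref{glue_sis} via Proposition \ref{gabmult_prop} and Equation \eqref{gabbr}, and conclude from left-invertibility of the analysis map at $p=2$, $v\equiv 1$. The divergence is in the decay verification, and there your treatment is the correct one while the paper's is not: the paper disposes of this point in one line by citing the inequality $w_{-s}(x)w_{-s}(w)\leq w_{-2s}(x,w)$, which is false (at $x=0$, $\abs{w}=1$ it reads $2^{-s}\leq 2^{-2s}$). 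The true relations are $w_{-2s}(x,w)\lesssim w_{-s}(x)\,w_{-s}(w)\leq w_{-s}(x,w)$, and the upper bound is sharp along the coordinate axes, so the tensor bound coming from \eqref{KN_P} and \eqref{gm_decay} gives radial decay on $\Rtdst$ of order $s$ only. Hence a literal application of Theorem \ref{glue_sis} (whose ambient dimension here is $2d$) is legitimate only under the stronger hypothesis $s>2d$; for the stated range $s>d$ one must, exactly as you propose, rerun the surgery machinery with the subconvolutive tensor weight $(x,w)\mapsto w_{-s}(x)w_{-s}(w)$ in place of a radial weight. So your proposal is not merely a stylistic variant: it supplies the argument that the theorem, as stated, actually requires, whereas the paper's own justification has a genuine error at this step.

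Two caveats on your sketch of the ``hard part''. First, a small one: the analogue of Lemma \ref{conv_nodes} for the tensor weight is not obtained by using ``relative separation in each of the two $d$-dimensional blocks'' --- projections of a relatively separated subset of $\Rtdst$ onto the blocks need not be relatively separated. Instead, cover $\Rtdst$ by unit cubes, place at most $\rel(\Gamma)$ points in each, and factor the resulting sum over $\Zst^{2d}=\Zdst\times\Zdst$, applying the discrete form of Lemma \ref{conv_bound} in each factor. Second, and more substantively: the chain Theorem \ref{glue_sis} $\to$ Theorem \ref{splines_glue} also invokes Theorem \ref{quality_theorem}, whose proof rests on Lemma \ref{pseudo_inverse}, i.e., on Sun's quantitative Wiener lemma for matrices with radial polynomial off-diagonal decay. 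Your argument therefore needs the corresponding statement for matrices whose off-diagonal decay is dominated by the tensor weight with $s>d$. This is true --- the weight $(1+\abs{x})^{s}(1+\abs{w})^{s}$ is submultiplicative and satisfies the GRS condition, its reciprocal is subconvolutive, so the inverse-closedness results behind \cite{su07-5} and \cite{grle06} apply, and the constants can be tracked as in Lemma \ref{pseudo_inverse} --- but it is a separate, nontrivial ingredient that your outline omits, and it must be verified before the proof can be considered complete.
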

\begin{rem}
As we have seen, the theorem also establishes an uniform equivalence
between the $\ell^p_v$ norm of the coefficients in Equation \eqref{mixed_lower_symbol}
and the $L^p_v$ norm of the Kohn-Nirenberg symbol of $T$, for $1 \leq p \leq \infty$ and a certain class of weights.
\end{rem}
\begin{proof}
By the discussion above, in order to apply Theorem \ref{glue_sis} we need to
observe that the Kohn-Nirenberg symbols of all the atoms are adequately localized. 
This follows from Equation \eqref{KN_P} and the fact that
$w_{-s}(x) w_{-s}(w) \leq w_{-2s}(x,w)$, for $x,w \in \Rdst$.
\end{proof}
\section{Acknowledgement}
The author is greatly indebted to Prof. Hans Feichtinger, who motivated and
encouraged this research and provided his perspective on several key topics.

This paper was written during a long-term visit to NuHAG in which the author
was partially supported by the EUCETIFA Marie Curie Excellence Grant
(FP6-517154, 2005-2009). The author holds a fellowship from the CONICET and
thanks this institution for its support. His research is also partially
supported by grants: PICT06-00177, CONICET PIP 112-200801-00398 and UBACyT
X149.
\bibliographystyle{abbrv}

\end{document}